\def\smallddots{\mathinner{\raise7pt\hbox{.}\raise4pt\hbox{.}\raise1pt\hbox{.}}}
\def\smallsdots{\mathinner{\raise1pt\hbox{.}\raise4pt\hbox{.}\raise7pt\hbox{.}}}
\DeclareMathOperator{\diag}{diag}
\DeclareMathOperator{\rank}{rank}
\DeclareMathOperator{\nrank}{nrank}
\numberwithin{equation}{section}
\numberwithin{table}{section}
\newtheorem{theorem}{Theorem}[section]
\newtheorem{lemma}{Lemma}[section]
\newtheorem{corollary}{Corollary}[section]
\newtheorem{fact}{Fact}[section]
\newtheorem{algorithm}{Algorithm}[section]
\newtheorem{example}{Example}[section]
\newtheorem{definition}{Definition}[section]
\newtheorem{remark}{Remark}[section]
\begin{document}

\title{\bf  Random Multipliers Numerically Stabilize Gaussian and Block Gaussian Elimination: 
Proofs and an Extension to Low-rank Approximation
\thanks {Some results of this paper have been presented at the 
 ACM-SIGSAM International 
Symposium on Symbolic and Algebraic Computation (ISSAC '2011), in San Jose, CA, 
June 8--11, 2011,
the
3nd International Conference on Matrix Methods in Mathematics and 
Applications (MMMA 2011), in
Moscow, Russia, June 22--25, 2011, 
the 7th International Congress on Industrial and Applied Mathematics 
(ICIAM 2011), in Vancouver, British Columbia, Canada, July 18--22, 
2011,
the SIAM International Conference on Linear Algebra,
in Valencia, Spain, June 18-22, 2012,  
the Conference on Structured Linear and Multilinear Algebra Problems (SLA2012),
in  Leuven, Belgium, September 10-14, 2012, and the 18th Conference of the 
International Linear Algebra Society (ILAS 2013), in Providence, RI, USA, June 3--7, 2013}}

\author{Victor Y. Pan$^{[1, 2],[a]}$, Guoliang Qian$^{[2],[b]}$ and Xiaodong Yan $^{[2],[c]}$
\and\\
$^{[1]}$ Department of Mathematics and Computer Science \\
Lehman College of the City University of New York \\
Bronx, NY 10468 USA \\
$^{[2]}$ Ph.D. Programs in Mathematics  and Computer Science \\
The Graduate Center of the City University of New York \\
New York, NY 10036 USA \\
$^{[a]}$ victor.pan@lehman.cuny.edu \\
http://comet.lehman.cuny.edu/vpan/  \\
$^{[b]}$ gqian@gc.cuny.edu \\
$^{[c]}$ xyannyc@yahoo.com \\
} 


\maketitle


\begin{abstract}
We study two applications of standard Gaussian random multipliers. At 
first we prove that with a probability close to 1 such a multiplier is 
expected to numerically stabilize Gaussian elimination with no pivoting
as well as block Gaussian elimination. Then, by extending our analysis, 
we prove that such a multiplier is also expected to support low-rank 
approximation of a matrix without customary oversampling. Our test 
results are in good accordance with this formal study. The results 
remain similar when we replace Gaussian multipliers with random circulant 
or Toeplitz multipliers, which involve fewer random parameters and enable 
faster multiplication. We formally support the observed efficiency of random 
structured multipliers  applied to approximation, but not to elimination. 
Moreover, we prove that with a probability close to 1 Gaussian random 
circulant multipliers do not fix numerical instability of the elimination 
algorithms for a specific narrow class of well-conditioned inputs. We know 
of no such hard input classes for various alternative choices of random 
structured multipliers, but for none of such multipliers
we have a formal proof of its efficiency  
 for numerical Gaussian elimination.
\end{abstract}

\paragraph{\bf 2000 Math. Subject Classification:}
 15A52, 15A12, 15A06, 65F22, 65F05

\paragraph{\bf Key Words:}
Random matrices,
Random multipliers,
Gaussian elimination,
Pivoting,
Block Gaussian elimination,
Low-rank approximation,
SRFT matrices,
Random circulant matrices
 


\section{Introduction}\label{sintro}


\subsection{Overview}\label{sover}


We call a standard Gaussian random matrix
 just {\em Gaussian}
and  
apply Gaussian multipliers to

\begin{itemize}

\item 
 numerically stabilize
 Gaussian 
and block Gaussian elimination
 using no pivoting, orthogonalization
or symmetrization
\item 
approximate the leading singular spaces
of an ill-con\-di\-tioned matrix, 
 associated with 
its largest singular values 
 and 
\item 
approximate this matrix by a low-rank matrix.
\end{itemize}

Ample empirical evidence shows efficiency of all these applications
 \cite{HMT11}, \cite{M11}, \cite{PQZ13}, 
but formal proofs are only known
in the case of approximation  
under the assumption of small oversampling,
that is, a minor increase of the multiplier size  \cite{HMT11}.
We provide 
formal support for randomized stabilization of Gaussian and block Gaussian 
elimination  and for
randomized approximation without oversampling. 

Our test results are in good accordance with our analysis
and also show similar power of  random circulant
multipliers or their leading (that is, northeastern) Toeplitz blocks 
for the   
elimination and approximation, respectively.  
Simple extension of the known 
estimates enables formal support for this empirical observation
in the case of 
low-rank approximation, but not
in the case of elimination.
Moreover, we prove that with a high probability 
Gaussian circulant multipliers cannot fix numerical instability
 of the elimination algorithms 
for a specific narrow class of inputs.
The issue remains open for the elimination preprocessed 
with other random structured multipliers.

In the next subsections we specify further the computational tasks and our results.


\subsection{Numerical Gaussian elimination with no pivoting and 
block Gaussian elimination}\label{sgenp}

 
Gaussian elimination, applied
numerically,
with rounding errors, can 
  fail even in the case of 
a nonsingular well-con\-di\-tioned input matrix
unless
this matrix is also
positive definite,
 diagonally 
dominant,  or 
totally positive.
For example (see part (i) of Theorem \ref{thcgenp}), 
numerical Gaussian elimination fails when it is 
applied to
the  matrices of discrete Fourier transform of large sizes,
even though they  are unitary up to scaling.
In practice the
user avoids the problems by applying 
 Gaussian elimination 
with partial pivoting,
that is, appropriate row interchange,
which has some limited 
formal and ample empirical support. 
Our alternative is  
Gaussian elimination with no pivoting.
Hereafter we use the acronyms  {\em GEPP} and 
{\em  GENP}.

In Section \ref{ssgnp} we prove that 
GENP is safe numerically
with a probability close to 1
 when the input matrix
 is  
nonsingular, well-conditioned and preprocessed
with a Gaussian 
 multiplier. 

At the first glance our preprocessing may seem to be more costly than
partial pivoting. The latter only involves order of $n^2$
comparisons for an $n\times n$ input matrix $A$, versus 
$2n^3-n^2$ arithmetic operations for its multiplication 
by a Gaussian multiplier.
Generation of $n^2$ independent Gaussian 
entries of the  $n\times n$ multiplier
is  an additional charge, even though 
 one can discount partly it because this  
preprocessing stage is independent of the input matrix.
More careful comparison, however, shows that
partial pivoting takes quite a heavy toll.
It 
interrupts the stream of arithmetic operations 
with foreign operations of comparison,
involves book-keeping, compromises data locality, and
increases communication overhead and data dependence. 
 
Choosing 
between GEPP and GENP with randomized preprocessing 
the user may also consider various other factors,  
 some of them dynamic in time.
For example, here is a relevant citation from  \cite{BCD14}:
``The traditional metric for the efficiency of a numerical algorithm has been the number of arithmetic operations it performs. Technological trends have long been reducing the time to perform an arithmetic operation, so it is no longer the bottleneck in many algorithms; rather, communication, or moving data, is the bottleneck".

Our  test results with Gaussian input  matrices
are in good accordance with our formal estimates
 (see Figures \ref{fig:fig1}--\ref{fig:fig5} 
 and Tables \ref{tab61}--\ref{tab65a}). 
In our tests  
the output accuracy of GENP with preprocessing
was a little lower than in the case of the customary GEPP,
as can be expected,
but 
 a single step of 
iterative refinement,
performed at a dominated 
computational cost, has always fixed this discrepancy
(see Figures \ref{fig:fig2} and \ref{fig:fig3} and  Table \ref{tab63}). 
The tests show similar results when we applied Gaussian circulant
rather than general Gaussian
multipliers, and in this case the cost of our preprocessing 
decreases dramatically
(see more on that in Section \ref{srsm}).

Finally, 
all our study of GENP,  including formal 
probabilistic support of its numerical performance,
is immediately extended to {\em block Gaussian elimination}
(see Section \ref{sbgegenpn}),
whereas  pivoting cannot amend this valuable algorithm,
and so without our preprocessing, it is numerically unsafe to use it,
unless an  input matrix is nonsingular, well-con\-di\-tioned, and
positive definite,
 diagonally 
dominant,  or 
totally positive.
 

\subsection{Low-rank approximation of a matrix}\label{srdpr}


Random multipliers are known to be highly
efficient  
 for low-rank approximations
of an $m\times n$ matrix $A$ having a small numerical rank $r$.
As the basic step, one computes the product $AH$ where $H$ is 
a random $n\times l$ multiplier, for  $l=r+p$ and 
a positive {\em oversampling integer} $p$.
The resulting randomized algorithms  have been studied
extensively, both formally and experimentally
 \cite{HMT11},  \cite{M11}.
They
are
numerically stable, run at a low 
computational cost, 
allow low-cost improvement of 
the output accuracy by means of the Power Method,
and have 
 important applications to
matrix computations, data mining,
statistics, PDEs and integral equations.

By extending our analysis  of preprocessed GENP, 
we prove that even for an $n\times r$ Gaussian
multiplier, that is, {\em without oversampling},
the algorithms output 
rank-$r$  approximations of the input matrix
with a probability close to 1.
Then again our test results 
are in good accordance with our formal estimates
 (see Figures \ref{fig:fig6} 
and \ref{fig:fig7} and Tables \ref{tab67}--\ref{tab614}).

The decrease of $p$ to 0 should be theoretically interesting,
although it has only  minor practical promise, 
limited to the cases where   
the numerical rank $r$ is small and  
the user knows it. Indeed, according to \cite[Section 4.2]{HMT11},
``it is adequate to choose  $\dots~~p=5$ or $p=10$".

\subsection{Computations with random structured multipliers}\label{srsm}

The SRFT $n\times l$  multipliers $H$ 
(SRFT is the acronym for Subsample Random Fourier Transform)
involve only $n$ random parameters
versus $nl$ parameters of Gaussian multipliers and accelerate 
the computation of the product $AH$ by a factor of $l/\log (l)$
 versus $n\times l$ Gaussian multipliers $H$.
It has been proved that they are expected to support rank-$r$ approximation
 assuming
oversampling integers $p=l-r$ of order $r\log (r)$,
and empirically this has been observed for reasonably small constants $p$, usually 
being not more than 20
 \cite[Section 11]{HMT11}, \cite{M11}.
We readily extend the latter results
 to the case where $n\times l$ products of random  $n\times n$
circulant and random $n\times l$ permutation matrices
are used as multipliers instead of SRFT matrices (see Remark \ref{remsrtft}),
and we observe such empirical behavior also in the cases when we 
preprocess GENP by applying random circulant  multipliers
(see  Figures  \ref{fig:fig2},  \ref{fig:fig3}, \ref{fig:fig6} 
and \ref{fig:fig7} and Tables \ref{tab64}, \ref{tab65}, \ref{tab611}, and \ref{tab614}).
In the latter case we only need $n$
random parameters versus $n^2$ for a Gaussian multiplier
and accelerate Gaussian preprocessing  for GENP by a factor of $n/\log (n)$.
  
This acceleration factor grows to $n^2/\log (n)$ 
if an input matrix is Toeplitz or Toeplitz-like
and if we can apply the
 MBA  celebrated 
 algorithm, which is just recursive block
 Gaussian elimination adjusted to a Toeplitz-like input \cite[Chapter 5]{p01}
and which is superfast, that is, runs in nearly linear arithmetic time.
Numerical stability problems, however, are well known for this algorithm
\cite{B85}, and fixing them was the central subject of the highly recognized 
papers \cite{GKO95} and \cite{G98}. Pivoting could not be applied here
because it destroys Toeplitz structure, thus increasing the solution cost to cubic.
So the authors first reduced the task to the  case of Cauchy-like inputs by
specializing the techniques of the transformation of matrix structures from \cite{P90} 
(also see \cite{P15})
and then applied a fast Cauchy-like variant of GEPP
using quadratic arithmetic time. 
 
Stabilization of the MBA superfast algorithm by means  of random
circulant multipliers would mean randomized acceleration 
by order of magnitude versus   \cite{GKO95} and \cite{G98}
because circulant multipliers preserve Toeplitz-like structure
of an input matrix and thus keep the MBA algorithm superfast.
Empirically, Gaussian circulant multipliers do stabilize
GENP numerically,
but no formal support is known for  this observed behavior.
Moreover we
even prove that 
the application  of GENP 
 fails
 for a specific narrow class of  matrices, 
and with probability near 1
 their preprocessing with Gaussian circulant multipliers 
does not help 
 (see Remark \ref{remsrtft}). 
We know of  no such hard inputs
 for some variations 
of using Gaussian circulant multipliers for GENP and MBA, such as
using products of random circulant and skew-circulant multipliers
and using random circulant pre-multipliers and post-multipliers simultaneously.
Both variations preserve Toeplitz-like input structure 
and allow superfast performance of the MBA algorithm,
but we have no formal support for the efficiency of such multipliers.

\subsection{Related works}\label{srel}

Preconditioning of linear systems of equations  
is a classical subject \cite{A94}, \cite{B02}, \cite{G97}.
For early work on randomized multiplicative preprocessing
as a means of countering degeneracy of matrices
see 
Section 2.13 ``Regularization of a Matrix via 
Preconditioning with Randomization" in \cite{BP94}
 and the bibliography therein. On the most recent advance in this direction see \cite{PZa}.
On the early specialization of such techniques to Gaussian elimination
see \cite{PP95}.
Randomized multiplicative preconditioning
for numerical stabilization of GENP 
was proposed in \cite[Section 12.2]{PGMQ} 
and \cite{PQZ13}, although only
weaker theorems on the
 formal support of this approach
were stated and their proofs were omitted.
The paper \cite{BBD12}
and the bibliography therein cover the
heuristic application of PRBMs
(that is, Partial Random Butterfly Multipliers), providing 
some empirical support 
 for GENP with  preprocessing.
On low-rank approximation
we refer the reader to the surveys \cite{HMT11} and \cite{M11},
which were the springboard for our study in Section \ref{sapsr}.
We cite these and other related works throughout 
the paper and refer the reader to \cite[Section 11]{PQZa}
for further bibliography. 
The estimates of our Corollary \ref{cor1} 
are close to the ones of  \cite[Theorem 3.8]{PQ10},
which
were 
the basis for our algorithms in \cite{PQ10}, 
\cite{PQ12}, and \cite{PQZC}.  
Unlike the latter papers, however, we 
state these basic estimates in a simpler
form, refine them by following 
 \cite{CD05} rather than \cite{SST06},
and include their detailed proofs.
On the related subject of estimating the norms and condition
numbers of Gaussian matrices and random structured matrices 
see   \cite{D88},  \cite{E88},  \cite{ES05},
\cite{CD05}, \cite{SST06}, \cite{HMT11}, \cite{T11}, and \cite{PSZa}. 
For a natural extension
of our present work,
 one can combine
randomized matrix multiplication with
randomized augmentation and 
 additive preprocessing
of   \cite{PQ10}, \cite{PQ12}, and
 \cite{PQZC}. 


\subsection{Organization of the paper}\label{sorg}


In the next 
section
we recall some definitions and basic results.
In Section \ref{sbgegenpn} we show that 
GENP and block Gaussian elimination
are numerically safe for a matrix whose 
all leading blocks are nonsingular 
and well conditioned. 
In Section \ref{smrc} we  estimate  
the impact of  
 preprocessing
with general nonrandom multipliers on 
these properties of the leading blocks.
In Sections \ref{sapsr1}
and
\ref{srnd} we extend our analysis 
from  Section \ref{smrc} in order to cover the impact of 
Gaussian and random structured multipliers,
respectively.
In Section \ref{sapsr}
 we recall  an algorithm from  \cite{HMT11} for
low-rank approximation and prove that this 
randomized algorithm is expected to
work even with no oversampling.
 In Section \ref{sexp} we cover numerical tests
(the contribution of the last two authors).
Section \ref{sconcl} contains a brief summary.
In  Appendix \ref{sssm} we recall the known
probabilistic estimates for the error norms of 
randomized low-rank approximations.
In  Appendix \ref{srsnrm}
we estimate the probability that 
 a 
 random matrix 
has full rank
under the uniform probability distribution.
In Appendix \ref{sosvdi0}
we  estimate the perturbation errors of matrix inversion.
In Appendix \ref{stab} we display tables with our test results,
which are more detailed than the data given by the plots in Section  \ref{sexp}.  
Some readers may be only interested in 
the part of our paper on GENP.
They can skip Sections 
 \ref{sapsr}
 and \ref{stails}.


\section{Some definitions}\label{sdef}


Except for using unitary circulant matrices in Sections  \ref{srnd} and
\ref{stails}, we
assume computations in the field $\mathbb R$ of real numbers,
but the extension to the case of the complex field $\mathbb C$
is quite straightforward.
Hereafter ``flop" stands for ``arithmetic operation",
`` i.i.d." stands for ``independent identically distributed",
and ``Gaussian matrix" stands for
``standard Gaussian random matrix" (cf. Definition \ref{defrndm}). 
 The concepts ``large", ``small", ``near", ``closely approximate", 
``ill-con\-di\-tioned" and ``well-con\-di\-tioned" are 
quantified in the context. By saying ``expect" and ``likely" we
mean ``with probability $1$ or close to $1$".
(We only use the concept of the expected value in Theorem \ref{thszcd}, 
Corollary \ref{cocd}, and Appendix \ref{sssm}.) 

Next we recall and extend some customary definitions of matrix computations
\cite{GL13}, \cite{S98}.





$\mathbb  R^{m\times n}$ is the class of real $m\times n$ matrices $A=(a_{i,j})_{i,j}^{m,n}$.


$\diag (B_1,\dots,B_k)=\diag(B_j)_{j=1}^k$ is a $k\times k$ block diagonal matrix 
with the diagonal blocks $B_1,\dots,B_k$. $(B_1)_{1}^k$, 
$(B_1~|~\dots~|~B_k)$, and $(B_1,\dots,B_k)$ 
denote a $1\times k$ block matrix with the blocks $B_1,\dots,B_k$.
In both cases the blocks $B_j$ can be rectangular.
  
$I_n$
is the $n\times n$ identity
matrix. 
$O_{k,l}$ is the $k\times l$ matrix filled with zeros. 
We write $I$ and $O$ if the  matrix size  
 is defined by context. 
$A^T$ is the transpose  of a 
matrix $A$.

 $A_{k,l}$  denotes its leading, 
that is, northwestern  $k\times l$ block 
submatrix, and 
we also write  $A^{(k)}=A_{k,k}$.


$||A||=||A||_2$
is the spectral norm of a matrix $A$.
$||A||_F$
is its Frobenius norm.

A real matrix $Q$ is  
{\em orthogonal} if $Q^TQ=I$ 
 or $QQ^T=I$. 
$(Q,R)=(Q(A),R(A))$ for an $m\times n$ matrix $A$ of rank $n$
denotes a unique pair of orthogonal $m\times n$ and
 upper triangular $n\times n$ matrices such that $A=QR$
and all diagonal entries of the matrix $R$
are positive  \cite[Theorem 5.2.3]{GL13}. 

 $A^+$ denotes the Moore--Penrose 
pseudo-inverse of an $m\times n$ matrix $A$, and
\begin{equation}\label{eqsvd}
A=S_A\Sigma_AT_A^T
\end{equation}
denotes its SVD where $S_A^TS_A=S_AS_A^T=I_m$,
$T_A^TT_A=T_AT_A^T=I_n$,
$\Sigma_A=\diag(\sigma_j(A))_{j}$,  and
$\sigma_j=\sigma_j(A)$
is the $j$th largest singular value of $A$.
If a matrix $A$ has full column rank $\rho$, then
\begin{equation}\label{eqnrm+}
||A^+||=1/\sigma_{\rho}(A).
\end{equation}

  $A^{+T}$ stands for $(A^+)^T=(A^T)^+$, $A_s^T$  for $(A_s)^{T}$,
and $A_s^+$ for $(A_s)^{+}$ where $s$ can denote a scalar, a matrix, or
a pair of such objects, e.g., $A_{k,l}^T$ stands for $(A_{k,l})^T$. 





$\kappa (A)=\frac{\sigma_1(A)}{\sigma_{\rho}(A)}=||A||~||A^+||$ is the condition 
number of an $m\times n$ matrix $A$ of a rank $\rho$. Such matrix is 
{\em ill-con\-di\-tioned}
if the ratio $\sigma_1(A)/\sigma_{\rho}(A)$
is large. If the ratio is reasonably bounded,
then the matrix is {\em well-con\-di\-tioned}.
An $m\times n$ matrix $A$ has a {\em numerical rank} 
$r=\nrank(A)\le \rho=\rank (A)$ 
if the ratios $\sigma_{j}(A)/||A||$
are small for $j>r$ but not for $j\le r$. 


The following concepts 
cover all rectangular matrices, but we need 
them just in the case of square matrices, whose sets of leading blocks 
include the   matrices themselves.
A matrix is {\em strongly nonsingular} if all its leading blocks
 are nonsingular. 
Such a matrix is 
 {\em strongly well-conditioned} if all its leading blocks
 are well-conditioned. 

We recall further relevant definitions and basic results  of
 matrix computations
in the beginning of Section \ref{sapsr} and in the Appendix.


\section{Block Gaussian elimination and GENP
}\label{sbgegenpn}


For a 
nonsingular
$2\times 2$ block matrix $A=\begin{pmatrix}
B  &  C  \\
D  &  E
\end{pmatrix}$ of size $n\times n$
with nonsingular $k\times k$ {\em pivot block} $B=A^{(k)}$, 
 define 
$S=S(A^{(k)},A)=E-DB^{-1}C$,
the {\em Schur complement} of $A^{(k)}$ in $A$,
and
the block factorizations,

\begin{equation}\label{eqgenp}
\begin{aligned}
A=\begin{pmatrix}
I_k  &  O_{k,r}  \\
DB^{-1}  & I_r
\end{pmatrix}
\begin{pmatrix}
B  &  O_{k,r} \\
O_{r,k}  &  S
\end{pmatrix}
\begin{pmatrix}
I_k  &  B^{-1}C  \\
O_{k,r}  & I_r
\end{pmatrix}
\end{aligned}
\end{equation} 
and
\begin{equation}\label{eqgenpin}
\begin{aligned}
A^{-1}=\begin{pmatrix}
I_k  &  -B^{-1}C  \\
O_{k,r}  & I_r
\end{pmatrix}
\begin{pmatrix}
B^{-1}  &  O_{k,r} \\
O_{r,k}  &  S^{-1}
\end{pmatrix}
\begin{pmatrix}
I_k  &  O_{k,r}  \\
-DB^{-1}  & I_r
\end{pmatrix}
\end{aligned}.
\end{equation} 

We verify readily that $S^{-1}$ is the $(n-k)\times (n-k)$ trailing
(that is, southeastern) block of the inverse matrix $A^{-1}$, and so  
the Schur complement $S$ is nonsingular since the matrix $A$ is nonsingular.

Factorization (\ref{eqgenpin}) reduces the inversion of the matrix $A$ 
to the inversion of the leading block $B$ and its 
Schur complement $S$, and we can 
recursively reduce the task to the case of the leading blocks 
and Schur complements of decreasing sizes 
as long as the leading blocks are nonsingular.  
After sufficiently many  recursive steps 
of this
process of
block Gaussian elimination,
we only need to invert matrices  of
small sizes, and then we can stop the process   
 and  apply a selected black box
inversion algorithm. 

In $\lceil\log_2(n)\rceil$ recursive steps
all pivot blocks and 
all other
matrices involved into the 
resulting factorization
turn into scalars, 
all matrix 
multiplications and inversions turn into 
scalar multiplications and divisions,
and we arrive at a
{\em complete recursive factorization} of the matrix $A$.
If
$k=1$ at all recursive steps, then the complete
recursive factorization (\ref{eqgenpin})
defines GENP and can be applied to computing the inverse $A^{-1}$
or the solution ${\bf y}=A^{-1}{\bf b}$
to a linear system $A{\bf y}={\bf b}$. 

Actually, however, any complete recursive factorizations
turns into GENP up to the order in which we 
consider its steps.
This follows because  at most $n-1$  distinct
Schur complements $S=S(A^{(k)},A)$ 
for $k=1,\dots,n-1$
are involved in all recursive block factorization
processes for $n\times n$ matrices $A$,
and so we arrive at the same Schur complement in a fixed position
via GENP and via any other recursive block factorization (\ref{eqgenp}).
Hence we can interpret factorization step  (\ref{eqgenp})
as the block elimination of the
first $k$
columns of the matrix $A$, 
which produces  the  matrix $S=S(A^{(k)},A)$.
If the dimensions 
$d_1,\dots,d_r$ and $\bar d_1,\dots,\bar d_{\bar r}$ of 
the pivot  blocks in 
two block elimination processes
 sum to the same integer $k$, that is, if 
$k=d_1+\dots+d_r=\bar d_1+\dots+\bar d_{\bar r}$,
then 
both processes produce the same Schur complement $S=S(A^{(k)},A)$.
The following results extend this observation.


\begin{theorem}\label{thsch}
In the recursive block factorization process based on (\ref{eqgenp}),  
 every diagonal block of every block diagonal factor is either 
a leading block of the input matrix $A$ or the Schur complement $S(A^{(h)},A^{(k)})$
 for some integers $h$ and $k$ such that $0<h<k\le n$ and
$S(A^{(h)},A^{(k)})=(S(A^{(h)},A))^{(h)}$.
\end{theorem}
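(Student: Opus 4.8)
The plan is to prove the two assertions by induction on the depth of the recursive block factorization, using the "order-independence" observation that immediately precedes the theorem: any Schur complement that arises in a fixed position of a recursive block elimination depends only on which leading block of $A$ has been eliminated, not on the sequence of pivot-block sizes used to eliminate it. Concretely, if the dimensions of the successive pivot blocks used so far sum to $h$, the resulting Schur complement is $S(A^{(h)},A)$, and this is the unique such object.

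First I would set up the induction. At the top level, factorization (\ref{eqgenp}) applied with a $k\times k$ pivot block produces the block diagonal factor $\diag(B, S)$ with $B=A^{(k)}$ (a leading block of $A$) and $S = S(A^{(k)},A)$; reading $S(A^{(0)},A)$ as $A$ itself covers the degenerate end. So the base case holds. For the inductive step, the recursion is applied separately to the two diagonal blocks $B=A^{(k)}$ and $S=S(A^{(k)},A)$. The subtlety is that the recursion on $S$ is again a recursion of the same shape (\ref{eqgenp}) but now with input matrix $S$, not $A$, so the inductive hypothesis applied to $S$ yields diagonal blocks that are leading blocks of $S$ or Schur complements $S(S^{(i)}, S^{(j)})$ with $0<i<j\le n-k$. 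I then have to translate those back into statements about $A$.

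The heart of the argument is therefore the translation identities. The claim $S(A^{(h)},A^{(k)})=(S(A^{(h)},A))^{(h)}$ — wait, that index is a typo in the statement; what is really meant (and what the translation needs) is that a leading block of the Schur complement equals a Schur complement of the corresponding leading block, i.e.
\begin{equation}\label{eqtransl}
(S(A^{(h)},A))^{(m)} = S(A^{(h)}, A^{(h+m)}) \quad\text{for } 0<h<h+m\le n,
\end{equation}
and, correspondingly, that iterating Schur complementation composes: $S\big(S(A^{(h)},A^{(k)})^{(i)},\, S(A^{(h)},A^{(k)})\big)$ reduces to another single Schur complement $S(A^{(h')},A^{(k')})$ of $A$. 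Both follow from the block-inversion formula (\ref{eqgenpin}): the trailing block of $A^{-1}$ is $S^{-1}$, hence the Schur complement in a given position is read off as (the inverse of) a fixed trailing subblock of $A^{-1}$, and "a trailing subblock of a trailing subblock is a trailing subblock" — combined with the fact that the leading principal submatrices of $A^{-1}$ restricted to the trailing index range invert the Schur complements of the matching leading blocks of $A$. The quantization/order-independence remark lets me identify the pivot-dimension bookkeeping: after eliminating columns in two stages summing to $h$ within the block sitting below $A^{(k')}$, the position reached is exactly $S(A^{(h)}, \cdot)$.

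The main obstacle I expect is purely notational/bookkeeping: carefully tracking how an index range $(i,j]$ inside the recursion on $S=S(A^{(k)},A)$ corresponds to the index range $(k+i,\,k+j]$ inside $A$, and checking the boundary cases $h=0$ and $k=n$ so that "leading block of $A$" and "Schur complement" are the only two possibilities with no third case slipping through. The algebraic content — identity (\ref{eqtransl}) and the composition law for Schur complements — is standard and follows in a few lines from (\ref{eqgenp})–(\ref{eqgenpin}); the care lies in organizing the induction so that every diagonal block of every block diagonal factor is seen to be, after relabeling, either some $A_{j,j}$ or some $S(A^{(h)},A^{(k)})$ with $0<h<k\le n$, and in verifying the relation $S(A^{(h)},A^{(k)})=\big(S(A^{(h)},A)\big)^{(k-h)}$ that pins down each such block in terms of data of $A$ alone.
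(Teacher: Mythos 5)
The paper states Theorem~\ref{thsch} without proof; the formal argument is omitted after the informal discussion of order-independence of block elimination that precedes it. Your induction on recursion depth, together with the translation identity $(S(A^{(h)},A))^{(m)}=S(A^{(h)},A^{(h+m)})$ and the composition (quotient) rule for Schur complements, is a correct and complete way to fill that gap, and it is squarely in the spirit of the observation the authors say the theorem ``extends.'' You are also right that the printed relation $S(A^{(h)},A^{(k)})=(S(A^{(h)},A))^{(h)}$ is a typo: the left side is $(k-h)\times(k-h)$ while the right side as written is $h\times h$, so the intended identity is $S(A^{(h)},A^{(k)})=(S(A^{(h)},A))^{(k-h)}$, exactly the translation identity your induction uses. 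Two small points worth making explicit when you write this up: (a) the leading blocks of $A^{(k)}$ are leading blocks of $A$, and $S((A^{(k)})^{(i)},(A^{(k)})^{(j)})=S(A^{(i)},A^{(j)})$, so the recursion on the pivot block $B=A^{(k)}$ contributes only admissible diagonal blocks; and (b) the composition law $S\bigl(S(A^{(h)},A)^{(j-h)},S(A^{(h)},A)\bigr)=S(A^{(j)},A)$ is cleanly obtained from~(\ref{eqgenpin}) by reading each Schur complement's inverse as the corresponding trailing block of $A^{-1}$, since a trailing block of a trailing block is a trailing block. With those two observations spelled out, the inductive step closes with no third case escaping.
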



\begin{corollary}\label{corec}
The recursive block factorization process based on equation (\ref{eqgenp})  
can be completed by involving no singular
pivot blocks (and in particular no pivot elements vanish)
if and only if the input matrix $A$ is strongly nonsingular.
\end{corollary}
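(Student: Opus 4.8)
The plan is to derive both implications directly from Theorem \ref{thsch}, which tells us that every pivot block ever encountered in a recursive block factorization based on (\ref{eqgenp}) is either a leading block $A^{(k)}$ of $A$ or a matrix of the form $S(A^{(h)},A^{(k)})=(S(A^{(h)},A))^{(h)}$, itself a leading block of a Schur complement. So the whole argument reduces to relating nonsingularity of these Schur-complement leading blocks to nonsingularity of the leading blocks of $A$ itself.

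First I would prove the ``if'' direction: assume $A$ is strongly nonsingular, i.e.\ $A^{(k)}$ is nonsingular for every $k=1,\dots,n$. I proceed by induction on the depth of the recursion. At the top level, the first pivot block is a leading block $A^{(k)}$, nonsingular by hypothesis, so the factorization step (\ref{eqgenp}) is legitimate and produces the Schur complement $S=S(A^{(k)},A)$. The key observation is the nesting identity $S(A^{(h)},A^{(k)})=(S(A^{(h)},A))^{(h)}$ from Theorem \ref{thsch} together with the classical fact (visible from (\ref{eqgenpin})) that the trailing block of $(A^{(k)})^{-1}$ equals $(S(A^{(h)},A^{(k)}))^{-1}$; hence $\det A^{(k)}=\det A^{(h)}\cdot\det S(A^{(h)},A^{(k)})$. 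Since $A^{(h)}$ and $A^{(k)}$ are both nonsingular, so is $S(A^{(h)},A^{(k)})$. Thus every diagonal block named by Theorem \ref{thsch} is nonsingular, and in particular at the bottom of the recursion the $1\times1$ pivot blocks, i.e.\ the pivot elements, are nonzero; the process runs to completion.

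For the ``only if'' direction I would argue the contrapositive: suppose some leading block $A^{(k)}$ is singular; I must exhibit a singular pivot block forced to appear. Take the smallest such $k$, so $A^{(1)},\dots,A^{(k-1)}$ are all nonsingular. If $k=1$ then $a_{1,1}=0$ is itself the first pivot element and we are done. If $k>1$, then in any block elimination that eliminates the first $k-1$ columns in some order, the determinant identity $\det A^{(k)}=\det A^{(k-1)}\cdot\det S(A^{(k-1)},A^{(k)})$ forces $S(A^{(k-1)},A^{(k)})$ to be singular; but $S(A^{(k-1)},A^{(k)})$ is a leading block of the Schur complement $S(A^{(k-1)},A)$, hence a pivot block (or further decomposed into pivot blocks, one of which must then be singular by the same determinant telescoping) in the continued recursion. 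So a singular pivot block is unavoidable, contradicting completability. The main obstacle, and the place I would be most careful, is making the ``only if'' direction robust to the order in which columns are eliminated: I need the earlier remark in the text — that any complete recursive factorization produces the same Schur complement $S(A^{(k)},A)$ in a fixed position regardless of the block sizes chosen — so that ``some pivot block is singular'' is a statement about the factorization process as such, not about one particular bracketing. Once that order-independence is invoked, the determinant telescoping $\det A^{(k)}=\prod_j \det(\text{pivot block}_j)$ closes the argument cleanly.
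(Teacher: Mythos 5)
Your proposal is correct and takes essentially the same route as the paper: the paper's (very terse) proof is exactly ``combine Theorem~\ref{thsch} with $\det A=(\det B)\det S$ from~(\ref{eqgenp}),'' and your write-up simply expands both directions of that argument in full, including the telescoping of determinants over leading blocks and the order-independence of the Schur complements $S(A^{(k)},A)$.
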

\begin{proof}
Combine Theorem \ref{thsch} with the equation 
$\det A=(\det B)\det S$,
implied by (\ref{eqgenp}).
\end{proof}

The following theorem bounds the norms of all
 pivot blocks and their inverses and hence bounds
the condition numbers of the blocks, that is,
  precisely the quantities responsible for 
safe numerical performance of block Gaussian elimination 
and GENP. 
		

\begin{theorem}\label{thnorms} (Cf. \cite[Theorem 5.1]{PQZ13}.)
Assume GENP
or block Gaussian elimination
applied to 
 an
$n\times n$
matrix $A$ and
write $N=||A||$ and $N_-=\max_{j=1}^n ||(A^{(j)})^{-1}||$,
and so $N_-N\ge ||A||~||A^{-1}||\ge 1$. 
Then the absolute values of all pivot elements of GENP 
and the norms of all pivot blocks of 
block Gaussian elimination
do not exceed $N_+=N+N_-N^2$,
while the absolute values of the reciprocals of these 
elements and the norms of the inverses of the blocks do not
exceed $N_-$.
\end{theorem}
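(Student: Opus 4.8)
The plan is to locate every pivot block produced by the elimination inside the factorization structure described by Theorem \ref{thsch}, and then bound its norm and the norm of its inverse using only the two quantities $N=\|A\|$ and $N_-=\max_{j=1}^n\|(A^{(j)})^{-1}\|$. First I would invoke Theorem \ref{thsch}: every diagonal block of every block-diagonal factor arising in the recursive process is either a leading block $A^{(k)}$ of $A$ itself or a Schur complement of the form $S(A^{(h)},A^{(k)})=\bigl(S(A^{(h)},A)\bigr)^{(h)}$ for some $0<h<k\le n$. So it suffices to bound $\|M\|$ and $\|M^{-1}\|$ in each of these two cases, where in the GENP specialization $M$ is a scalar and the norm is its absolute value.

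For a leading block $M=A^{(k)}$, the bound on $\|M\|$ is immediate since $\|A^{(k)}\|\le\|A\|=N$ (a leading submatrix has spectral norm no larger than the whole matrix), and $\|(A^{(k)})^{-1}\|\le N_-$ by the very definition of $N_-$. For a Schur-complement block, the key step is to read off $S(A^{(h)},A)$ from the block factorization. From (\ref{eqgenpin}), written with $B=A^{(h)}$, the trailing $(n-h)\times(n-h)$ block of $A^{-1}$ equals $S^{-1}$ where $S=S(A^{(h)},A)$; hence $\|S^{-1}\|\le\|A^{-1}\|\le N_-N^2\cdot\frac{1}{N}\le N_-$ — more simply, $\|S^{-1}\|\le\|A^{-1}\|\le N_-$ since $\|A^{-1}\|=\|(A^{(n)})^{-1}\|\le N_-$. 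Then the block $M=S(A^{(h)},A^{(k)})=\bigl(S(A^{(h)},A)\bigr)^{(h)}$ is itself a leading block of $S(A^{(h)},A)$, so again $\|M^{-1}\|\le\|S^{-1}\|\le N_-$ by the same leading-submatrix/inverse reasoning applied one level down. It remains to bound $\|M\|$. Since $M$ is a leading block of $S=S(A^{(h)},A)$, we have $\|M\|\le\|S\|=\|E-DB^{-1}C\|$ where $\begin{pmatrix}B&C\\D&E\end{pmatrix}=A$ partitioned at index $h$; each of $\|B\|,\|C\|,\|D\|,\|E\|$ is at most $N$, and $\|B^{-1}\|=\|(A^{(h)})^{-1}\|\le N_-$, so $\|S\|\le\|E\|+\|D\|\,\|B^{-1}\|\,\|C\|\le N+N\cdot N_-\cdot N=N+N_-N^2=N_+$. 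This gives the claimed bound $\|M\|\le N_+$, and specializing $h=k-1$ with scalar blocks gives the statement for the pivot elements of GENP.

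The main obstacle, such as it is, is bookkeeping rather than mathematics: one must be careful that the Schur complements appearing as pivot blocks in an \emph{arbitrary} recursive schedule (not just the canonical column-by-column GENP order) are exactly those described by Theorem \ref{thsch}, so that the uniform bounds $N_+$ and $N_-$ cover all of them; this is precisely what the paragraph preceding Theorem \ref{thsch} and Theorem \ref{thsch} itself establish, and I would lean on them directly. A minor point to state cleanly is the inequality $N_-N\ge\|A\|\,\|A^{-1}\|\ge1$, which follows because $\|A^{-1}\|=\|(A^{(n)})^{-1}\|\le N_-$ and submultiplicativity, together with $\|A\|\,\|A^{-1}\|\ge\|A A^{-1}\|=\|I\|=1$; this justifies writing the final bound as $N_+=N+N_-N^2$ without worrying that it might be vacuous.
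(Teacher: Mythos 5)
Your overall strategy matches the paper's: locate every pivot block via Theorem~\ref{thsch}, then bound the norm of the block and of its inverse in terms of $N$ and $N_-$. Your bounds on $\|A^{(k)}\|$, on $\|(A^{(k)})^{-1}\|$, and on $\|S(A^{(h)},A)\|$ via $\|E\|+\|D\|\,\|B^{-1}\|\,\|C\|\le N+N_-N^2$ are all correct and essentially the same as in the paper.

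There is, however, one genuine gap, in the inverse bound for a general pivot block $M=S(A^{(h)},A^{(k)})$. You write that since $M$ is a leading block of $S=S(A^{(h)},A)$ we have ``$\|M^{-1}\|\le\|S^{-1}\|\le N_-$ by the same leading-submatrix/inverse reasoning applied one level down.'' But the norm of the inverse of a leading submatrix is \emph{not} in general bounded by the norm of the inverse of the full matrix; the only reason this held for $A$ itself is that $N_-$ is \emph{defined} as $\max_j\|(A^{(j)})^{-1}\|$, a definition that makes no statement about leading blocks of $S$. (A $2\times 2$ example: $S=\begin{pmatrix}\varepsilon&1\\1&0\end{pmatrix}$ has $\|S^{-1}\|$ bounded as $\varepsilon\to 0$, yet the inverse of its leading $1\times 1$ block is $1/\varepsilon$.) The correct route, which the paper's terse proof intends by the phrase ``now the claimed bound follows from Theorem~\ref{thsch},'' is to apply the southeastern-block observation not at the level of $A$ but at the level of the leading block $A^{(k)}$: since $M=S(A^{(h)},A^{(k)})$ is the Schur complement of $A^{(h)}$ \emph{inside} $A^{(k)}$, factorization (\ref{eqgenpin}) applied to $A^{(k)}$ shows that $M^{-1}$ is the trailing $(k-h)\times(k-h)$ block of $(A^{(k)})^{-1}$, whence $\|M^{-1}\|\le\|(A^{(k)})^{-1}\|\le N_-$ directly from the definition of $N_-$. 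Replacing your ``leading block of $S$'' step by this ``trailing block of $(A^{(k)})^{-1}$'' step closes the gap; everything else in your write-up stands.
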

\begin{proof}
Observe that the inverse $S^{-1}$ of the Schur complement $S$
in (\ref{eqgenp}) is the southeastern block of the inverse $A^{-1}$
and obtain
$||B||\le N$, $||B^{-1}||\le N_-$, and 
$||S^{-1}||\le ||A^{-1}||\le N_-$. Moreover
$||S||\le N+N_-N^2$, due to (\ref{eqgenp}). Now the claimed bound follows 
from Theorem \ref{thsch}.
\end{proof}
\noindent
 

\begin{remark}\label{reir}
By virtue of Theorem \ref{thnorms} 
the norms of the inverses of
all pivot blocks involved into a complete
(and hence also into any incomplete) recursive factorization
of a strongly nonsingular matrix $A$
are at most $N_-$.
We have a reasonable upper bound on $N_-$ if
the matrix $A$ is strongly well-con\-di\-tioned as well.
Then
in view of Theorem \ref{thpert}
the inversion of all pivot blocks is numerically
safe, and we say that {\em GENP is locally safe} for the matrix $A$.
\end{remark}


\begin{remark}\label{renrm}
In the recursive factorizations above 
only the factors of the leading blocks  
and the Schur complements
can contribute to the magnification
of any input perturbation. Namely at most $\lceil\log_2(n)\rceil$
such factors can contribute to the norm of each 
of the output triangular or block triangular
factors $L$ and $U$. This implies the moderately large 
worst case upper bound $(N_+N_-)^{\log_2(n)}$
on their norms, which  
 is overly pessimistic
according to our tests.
\end{remark}


\begin{remark}\label{rerect}
Our study in this and the next two sections can be extended readily to 
the cases of GENP and block Gaussian 
elimination applied to rectangular and possibly rank deficient matrices 
and to under- and over-determined and possibly rank deficient 
linear systems of equations.
Recursive factorization and elimination 
can be completed and are numerically safe when they are applied to any  
strongly nonsingular and  strongly well-conditioned leading block 
of the input matrix, in particular 
to the input matrix 
itself if it is strongly nonsingular and  strongly well-conditioned.
\end{remark}


\section{Singular values of the  matrix products (deterministic estimates)
and GENP and block Gaussian elimination with preprocessing}\label{smrc}


{\em Preprocessing} $A\rightarrow FAH$ 
for a pair of nonsingular matrices $F$ and $H$, 
one of which can be the identity matrix $I$,
reduces
the inversion of a matrix $A$ to
the inversion of a the product $FAH$,
and similarly for the solution of a linear system of equations.  
\begin{fact}\label{faprepr}
Assume three nonsingular matrices $F$, $A$, and $H$
 and a vector ${\bf b}$.
Then 
$A^{-1}=H(AH)^{-1}$, $A^{-1}=(FA)^{-1}F$,
$A^{-1}=H(FAH)^{-1}F$. Moreover,  if $A{\bf x}={\bf b}$, then $AH{\bf y}={\bf b}$, $FA{\bf x}=F{\bf b}$, and 
$FAH{\bf y}=F{\bf b}$, ${\bf x}=H{\bf y}$. 
\end{fact}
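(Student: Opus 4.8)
The plan is to verify each identity directly from two elementary facts: a product of nonsingular matrices is nonsingular, and $(XY)^{-1}=Y^{-1}X^{-1}$ for nonsingular $X,Y$, together with associativity of matrix multiplication.

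First I would establish the three inversion formulas. Since $F$, $A$, and $H$ are nonsingular, so are the products $AH$, $FA$, and $FAH$, and hence every inverse occurring in the statement is well defined. Then $H(AH)^{-1}=H\,H^{-1}A^{-1}=A^{-1}$; symmetrically $(FA)^{-1}F=A^{-1}F^{-1}F=A^{-1}$; and combining the two computations, $H(FAH)^{-1}F=H\,H^{-1}A^{-1}F^{-1}F=A^{-1}$. This settles the first sentence of the claim.

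Next I would treat the linear-system assertions, reading the relation $\mathbf{x}=H\mathbf{y}$ as the defining substitution, i.e.\ $\mathbf{y}=H^{-1}\mathbf{x}$, which is legitimate because $H$ is invertible. Assuming $A\mathbf{x}=\mathbf{b}$, the substitution gives $AH\mathbf{y}=A\mathbf{x}=\mathbf{b}$; left-multiplying the original equation by $F$ gives $FA\mathbf{x}=F\mathbf{b}$; and combining these, $FAH\mathbf{y}=FA\mathbf{x}=F\mathbf{b}$, while $\mathbf{x}=H\mathbf{y}$ holds by construction. The same relations, read in reverse, also show that a solution $\mathbf{y}$ of any preprocessed system yields the solution $\mathbf{x}=H\mathbf{y}$ of $A\mathbf{x}=\mathbf{b}$, which is the practically relevant direction.

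I do not anticipate any genuine obstacle here: the statement is a purely formal consequence of associativity and the inversion rule for products. The only point requiring a word of care is to make explicit that $\mathbf{y}$ denotes the preimage $H^{-1}\mathbf{x}$ (equivalently, that $\mathbf{x}=H\mathbf{y}$ is the defining substitution), so that the four displayed relations are mutually consistent.
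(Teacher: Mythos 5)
Your proof is correct, and the paper itself states this as a \emph{Fact} with no proof given, evidently regarding it as immediate; your direct verification via $(XY)^{-1}=Y^{-1}X^{-1}$, associativity, and the substitution $\mathbf{x}=H\mathbf{y}$ is exactly the argument being implicitly invoked.
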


 Remark \ref{reir} motivates the choice of the multipliers $F$ 
and $H$ for which the 
matrix $FAH$ is strongly nonsingular and
 strongly well-con\-di\-tioned. 
This is likely to occur already if one of the multipliers
$F$ and $H$
 is the identity matrix 
 and another one is a Gaussian random matrix. 
The studies of 
pre-multiplication by $F$ and post-multiplication by $H$
are similar, and so we only prove the latter claim
in the case of post-multiplication.
We complete our proof in Section \ref{ssgnp}. It
 involves the norms of the inverses of the matrices 
 $(AH)_{k,k}=A_{k,n}H_{n,k}$ for $k=1,\dots,r$,
 which we  estimate in this section  
assuming  
nonrandom multipliers
$H$. 
We begin with  two simple lemmas.


\begin{lemma}\label{lepr2} 
If $S$ and $T$ 
are square orthogonal matrices, then 
$\sigma_{j}(SA)=\sigma_j(AT)=\sigma_j(A)$ for all $j$.
\end{lemma}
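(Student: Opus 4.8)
The statement is the standard fact that multiplying a matrix on either side by a square orthogonal matrix leaves all singular values unchanged. The cleanest route is through the SVD $A=S_A\Sigma_A T_A^T$ recalled in (\ref{eqsvd}). First I would write $SA=(SS_A)\Sigma_A T_A^T$ and observe that $SS_A$ is a product of two square orthogonal matrices, hence itself square orthogonal: indeed $(SS_A)^T(SS_A)=S_A^T S^T S S_A=S_A^T S_A=I$. Therefore $(SS_A)\Sigma_A T_A^T$ is already in the form of an SVD of $SA$ (orthogonal factor, diagonal nonnegative factor with the same entries $\sigma_j(A)$ in nonincreasing order, orthogonal factor), and by uniqueness of the singular values $\sigma_j(SA)=\sigma_j(A)$ for every $j$. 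The argument for $AT$ is symmetric: $AT=S_A\Sigma_A(T^T T_A)^T$ — wait, more carefully, $AT=S_A\Sigma_A T_A^T T$, and $T_A^T T=(T^T T_A)^T$ with $T^T T_A$ square orthogonal by the same computation, so again this exhibits an SVD of $AT$ with the same diagonal, giving $\sigma_j(AT)=\sigma_j(A)$.

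An alternative, equally short, is to use the variational (Courant--Fischer / $\min$-$\max$) characterization of singular values, or simply the identity $\sigma_j(M)^2=\lambda_j(M^TM)$: since $(SA)^T(SA)=A^T S^T S A=A^T A$, the matrices $SA$ and $A$ have identical Gram matrices and hence identical singular values, and since $(AT)(AT)^T=A T T^T A^T=AA^T$, the matrices $AT$ and $A$ have identical matrices $MM^T$ and hence identical nonzero singular values; a dimension count (both are the same size as $A$) shows the zero singular values match up too. I would probably present the SVD-uniqueness version as the main proof since it is the most transparent and directly reuses notation already fixed in Section \ref{sdef}.

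There is essentially no obstacle here — the only point requiring a word of care is that ``orthogonal'' in this paper (per the definition in Section \ref{sdef}) means $Q^TQ=I$ \emph{or} $QQ^TQ=I$; for a \emph{square} matrix these are equivalent and $Q^T=Q^{-1}$, so the closure of square orthogonal matrices under products, and the fact that $S^TS=I$ may be used freely, are both immediate. I would state that equivalence in one line at the start so the rest of the argument goes through without comment.
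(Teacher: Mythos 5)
The paper states Lemma~\ref{lepr2} without giving a proof, treating it as a standard basic fact, so there is no in-paper argument to compare yours against. Your argument is correct: writing $SA=(SS_A)\Sigma_AT_A^T$ and $AT=S_A\Sigma_A(T^TT_A)^T$ and invoking uniqueness of the singular values in an SVD is the cleanest route, and your alternative via Gram matrices ($(SA)^T(SA)=A^TA$ and $(AT)(AT)^T=AA^T$) is also sound. One small slip worth fixing before it enters the text: you quote the paper's definition of orthogonality as ``$Q^TQ=I$ or $QQ^TQ=I$''; the paper actually writes $Q^TQ=I$ or $QQ^T=I$. For square $Q$ these are equivalent and force $Q^T=Q^{-1}$, so your use of $S^TS=SS^T=I$, of $T^TT=TT^T=I$, and of closure of square orthogonal matrices under products is fully justified, but the definition should be reproduced correctly. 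Finally, the Gram-matrix branch for $AT$ can be made a bit tighter and symmetric with the $SA$ case by noting that $(AT)^T(AT)=T^TA^TAT$ is an orthogonal similarity transform of $A^TA$ and hence has exactly the same eigenvalues; this removes the need for the separate dimension-count remark about matching up zero singular values.
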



\begin{lemma}\label{lepr1} 
Suppose $\Sigma=\diag(\sigma_i)_{i=1}^{n}$, $\sigma_1\ge \sigma_2\ge \cdots \ge \sigma_n$,
 and  $H\in \mathbb R^{n\times r}$.
Then 
$$\sigma_{j}(\Sigma H)\ge\sigma_{j}(H)\sigma_n {\rm for~all}~j.$$
If also $\sigma_n>0$, then 
 $$\rank (\Sigma H)=\rank (H).$$
\end{lemma}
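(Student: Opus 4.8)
The plan is to prove both assertions of Lemma \ref{lepr1} from the Courant--Fischer min-max characterization of singular values, applied to the matrix $\Sigma H$ together with the obvious factorization structure. Write $\sigma_n = \sigma_n(\Sigma)$ for the smallest diagonal entry, so that $\Sigma \succeq \sigma_n I_n$ in the sense that $\|\Sigma x\| \ge \sigma_n \|x\|$ for every $x \in \mathbb{R}^n$; equivalently, since $\Sigma$ is diagonal with nonnegative entries, $\Sigma = \sigma_n I_n + \Delta$ with $\Delta = \diag(\sigma_i - \sigma_n)_{i=1}^n$ a diagonal matrix with nonnegative entries.

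First I would recall that for any $B \in \mathbb{R}^{m \times r}$ one has
\begin{equation*}
\sigma_j(B) = \max_{\substack{\mathcal{V} \subseteq \mathbb{R}^r \\ \dim \mathcal{V} = j}} \ \min_{\substack{x \in \mathcal{V} \\ \|x\| = 1}} \ \|Bx\|,
\end{equation*}
where the maximum is over $j$-dimensional subspaces of the domain. Apply this to $B = \Sigma H$. For a fixed $j$-dimensional subspace $\mathcal{V} \subseteq \mathbb{R}^r$ and a unit vector $x \in \mathcal{V}$, write $y = Hx \in \mathbb{R}^n$; then $\|\Sigma H x\| = \|\Sigma y\| \ge \sigma_n \|y\| = \sigma_n \|Hx\|$. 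Taking the minimum over unit $x \in \mathcal{V}$ and then the maximum over all such $\mathcal{V}$, and using the same min-max formula for $\sigma_j(H)$, gives $\sigma_j(\Sigma H) \ge \sigma_n \, \sigma_j(H)$ for all $j$, which is the first claim. (The inequality $\|\Sigma y\| \ge \sigma_n\|y\|$ is just the coordinatewise bound $\sigma_i^2 y_i^2 \ge \sigma_n^2 y_i^2$ summed over $i$.)

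For the rank statement, assume $\sigma_n > 0$. Then the inequality above with $j = \rank(H)$ shows $\sigma_{\rank(H)}(\Sigma H) \ge \sigma_n \, \sigma_{\rank(H)}(H) > 0$, so $\rank(\Sigma H) \ge \rank(H)$. The reverse inequality $\rank(\Sigma H) \le \rank(H)$ is immediate since $\range(\Sigma H) \subseteq \Sigma(\range H)$ has dimension at most $\dim \range(H) = \rank(H)$. (Alternatively, when $\sigma_n > 0$ the matrix $\Sigma$ is invertible, hence $\rank(\Sigma H) = \rank(H)$ directly, and one could even cite Lemma \ref{lepr2} after noting $\Sigma$ need not be orthogonal — so the min-max route or the invertibility route is cleaner.) I anticipate no real obstacle here; the only point requiring a moment's care is getting the direction of the min-max characterization right (maximizing over subspaces of the \emph{domain} of $\Sigma H$, which has dimension $r$, not $n$) so that the subspace $\mathcal{V}$ used for $\Sigma H$ is literally the same object used for $H$, making the comparison term-by-term valid.
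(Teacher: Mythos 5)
Your proof is correct. The paper states Lemma \ref{lepr1} without proof, introducing it as one of ``two simple lemmas,'' so there is no paper argument to compare against; the Courant--Fischer min-max route you take is a natural and clean way to supply the missing details, and the key observation $\|\Sigma y\| \ge \sigma_n\|y\|$ (valid since the $\sigma_i$ are nonnegative singular values) combined with the fact that the maximization is over $j$-dimensional subspaces of the common domain $\mathbb{R}^r$ makes the term-by-term comparison go through. An even shorter route to the same inequality, worth knowing, is to write $\Sigma H = \Sigma(\Sigma')^{-1}\Sigma' H$ where $\Sigma' = \sigma_n I_n$ when $\sigma_n > 0$, so $\sigma_n H = \Sigma' H$ and $\sigma_j(\sigma_n H) = \sigma_j(\Sigma(\Sigma^{-1}\Sigma')(\Sigma H)) \le \|\Sigma^{-1}\Sigma'\|\,\sigma_j(\Sigma H) = \sigma_j(\Sigma H)$ since $\|\Sigma^{-1}\Sigma'\| = \sigma_n/\sigma_n = 1$ -- wait, this needs care with the ordering of factors; the multiplicative singular-value inequality $\sigma_j(AB) \le \|A\|\sigma_j(B)$ applied as $\sigma_j(\sigma_n H) = \sigma_j(\sigma_n\Sigma^{-1}\cdot\Sigma H) \le \|\sigma_n\Sigma^{-1}\|\,\sigma_j(\Sigma H) = \sigma_j(\Sigma H)$ does it when $\sigma_n > 0$, and the case $\sigma_n = 0$ is trivial. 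Your argument has the advantage of not splitting cases. The rank claim is handled correctly both ways you mention.
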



We also need the following basic results (cf. \cite[Corollary 8.6.3]{GL13}).
 
\begin{theorem}\label{thcndsb} 
If $A_0$ is a 
submatrix of a 
matrix $A$, 
then
$\sigma_{j} (A)\ge \sigma_{j} (A_0)$ for all $j$.
\end{theorem}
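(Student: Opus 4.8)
\textbf{Proof plan for Theorem \ref{thcndsb}.}
The plan is to reduce the general case of an arbitrary submatrix to two successive applications of a single ``one-step'' fact: deleting a single row (or column) of a matrix cannot increase any singular value in the ordered list. First I would record the standard observation that, for any matrix $M$, the $j$th largest singular value admits the extremal (Courant--Fischer / min-max) characterization
\begin{equation}\label{eqminmax}
\sigma_j(M)=\max_{\dim \mathcal V=j}\ \min_{{\bf x}\in \mathcal V,\ \|{\bf x}\|=1}\ \|M{\bf x}\|,
\end{equation}
where $\mathcal V$ ranges over $j$-dimensional subspaces of the domain of $M$, with the convention $\sigma_j(M)=0$ when $j$ exceeds the number of columns. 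Equivalently, one may invoke the version of \eqref{eqminmax} in terms of the eigenvalues of $M^TM$ together with Lemma \ref{lepr2}, but the min-max form is the most direct for what follows.

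Next I would handle column deletion. Suppose $A_1$ is obtained from $A$ by deleting one column; then the domain of $A_1$ is naturally embedded (by inserting a zero in the deleted coordinate) as a hyperplane $W$ in the domain of $A$, and $\|A_1{\bf x}\|=\|A\tilde{\bf x}\|$ for the corresponding lifted vector $\tilde{\bf x}$. Applying \eqref{eqminmax} to $A$ and restricting the outer maximization to $j$-dimensional subspaces contained in $W$ can only decrease the value, which gives $\sigma_j(A)\ge\sigma_j(A_1)$ for all $j$. For row deletion I would pass to transposes: if $A_1$ is obtained from $A$ by deleting one row, then $A_1^T$ is obtained from $A^T$ by deleting one column, so the column case plus the identity $\sigma_j(M)=\sigma_j(M^T)$ (itself immediate from Lemma \ref{lepr2} applied to the SVD) yields $\sigma_j(A)\ge\sigma_j(A_1)$ again. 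Iterating these two elementary steps finitely many times, we can pass from $A$ to any submatrix $A_0$ one deleted row or column at a time, and the inequality $\sigma_j(A)\ge\sigma_j(A_0)$ is preserved at each step by transitivity; this proves the theorem.

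The one point that needs care, rather than genuine difficulty, is the bookkeeping of indices and dimensions when $j$ is large relative to the number of rows or columns of the submatrix: one must use the convention that $\sigma_j$ of a matrix vanishes once $j$ exceeds its rank bound (number of columns or rows), so that the claimed inequality still reads correctly as ``something $\ge 0$'' in the degenerate range. The substantive content — that restricting the feasible set in the outer maximization of \eqref{eqminmax} cannot increase the optimum — is a one-line monotonicity remark, so I expect no real obstacle; the only thing to be vigilant about is stating \eqref{eqminmax} with the correct embedding of the smaller domain into the larger one so that the restriction argument is valid verbatim.
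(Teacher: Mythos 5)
Your proof is correct, and the paper itself gives no proof of this statement — it simply cites it as \cite[Corollary 8.6.3]{GL13} (Golub--Van Loan). The min-max (Courant--Fischer) argument you sketch, reducing to one-column deletion by restricting the outer maximization to a hyperplane, then handling row deletion via $\sigma_j(M)=\sigma_j(M^T)$, and iterating, is precisely the standard textbook route to this interlacing-type inequality, and your remark about the convention $\sigma_j(M)=0$ for $j>\min\{m,n\}$ correctly covers the degenerate range. Nothing is missing.
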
 

 


\begin{theorem}\label{thintpr}  
Suppose $r+l\le n\le m$, $l\ge 0$,
 $A\in \mathbb R^{m\times n}$,
$\rank(A_{m,r})=r$ and $\rank(A_{m,r+l})=r+l$.
Then $||A_{m,r}^+||\le ||A_{m,r+l}^+||$.
\end{theorem}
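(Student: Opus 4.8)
The plan is to deduce the inequality $\|A_{m,r}^+\| \le \|A_{m,r+l}^+\|$ from the identity \eqref{eqnrm+}, which tells us that for a matrix of full column rank the norm of the pseudo-inverse equals the reciprocal of the smallest (positive) singular value. Concretely, since $\rank(A_{m,r})=r$ we have $\|A_{m,r}^+\| = 1/\sigma_r(A_{m,r})$, and since $\rank(A_{m,r+l})=r+l$ we have $\|A_{m,r+l}^+\| = 1/\sigma_{r+l}(A_{m,r+l})$. So the claim is equivalent to the singular-value inequality
\[
\sigma_{r+l}(A_{m,r+l}) \le \sigma_r(A_{m,r}).
\]
First I would observe that $A_{m,r}$ is precisely the leading (northwestern) $m\times r$ block of $A_{m,r+l}$, i.e.\ it is a submatrix obtained by deleting the last $l$ columns. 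This is exactly the setting of Theorem~\ref{thcndsb}, the interlacing-type bound $\sigma_j(A) \ge \sigma_j(A_0)$ for a submatrix $A_0$ of $A$.

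The key step is then to apply Theorem~\ref{thcndsb} with $A = A_{m,r+l}$ and $A_0 = A_{m,r}$ at index $j = r$: this gives $\sigma_r(A_{m,r+l}) \ge \sigma_r(A_{m,r})$. That is the wrong direction at first sight, so the actual chaining I want is: $\sigma_{r+l}(A_{m,r+l}) \le \sigma_r(A_{m,r+l})$, because singular values are listed in non-increasing order and $r \le r+l$; and then $\sigma_r(A_{m,r+l}) \le \sigma_r(A_{m,r})$ would be backwards. So instead I use Theorem~\ref{thcndsb} the correct way: $A_{m,r}$ is a submatrix of $A_{m,r+l}$, hence $\sigma_j(A_{m,r+l}) \ge \sigma_j(A_{m,r})$ for all $j$; taking $j=r$ gives $\sigma_r(A_{m,r+l}) \ge \sigma_r(A_{m,r})$, which is still the wrong direction. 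The resolution is that I must compare $\sigma_{r+l}$ of the big matrix with $\sigma_r$ of the small one directly through monotonicity in the index: $\sigma_{r+l}(A_{m,r+l}) \le \sigma_r(A_{m,r+l})$, and separately note that deleting columns cannot increase any singular value only in the reverse inclusion — so the genuinely needed fact is that $\sigma_r(A_{m,r+l}) \le \sigma_r(A_{m,r})$ fails in general, meaning the honest route is: the $r$ largest singular values of $A_{m,r+l}$ dominate those of its submatrix $A_{m,r}$ from \emph{above}, so we cannot get there this way.

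Therefore the clean argument I would actually write uses the variational (min–max) characterization together with the column-deletion structure: $\sigma_{r+l}(A_{m,r+l})$ is the $(r+l)$-th singular value, and restricting the domain of the associated Rayleigh quotient to the coordinate subspace spanned by the first $r$ columns shows $\sigma_{r+l}(A_{m,r+l}) \le \sigma_r(A_{m,r})$ — equivalently, $1/\sigma_r(A_{m,r}) \le 1/\sigma_{r+l}(A_{m,r+l})$. Combining with \eqref{eqnrm+} applied to both $A_{m,r}$ (rank $r$) and $A_{m,r+l}$ (rank $r+l$) yields $\|A_{m,r}^+\| \le \|A_{m,r+l}^+\|$. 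The main obstacle is getting the direction of the inequality right and choosing the correct instance of the interlacing bound (or the correct restriction in the min–max principle): the cheap "submatrix $\Rightarrow$ smaller singular values" statement of Theorem~\ref{thcndsb} does not by itself close the gap, because the indices $r$ and $r+l$ differ, so one must combine it with the monotonicity of $\sigma_j$ in $j$ in the right order, or invoke the min–max characterization over a cleverly chosen test subspace.
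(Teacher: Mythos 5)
The paper states Theorem~\ref{thintpr} without any proof, so there is no in-paper argument to compare against. Your final argument is correct and is the standard one. By equation~(\ref{eqnrm+}), the claim reduces to the singular-value inequality $\sigma_{r+l}(A_{m,r+l})\le\sigma_r(A_{m,r})$, and this follows from the variational formula $\sigma_{\rho}(B)=\min_{\|{\bf x}\|=1}\|B{\bf x}\|$ for a full-column-rank $m\times\rho$ matrix $B$: for any unit vector ${\bf y}\in\mathbb R^r$, padding with $l$ trailing zeros gives a unit vector ${\bf x}\in\mathbb R^{r+l}$ with $A_{m,r+l}{\bf x}=A_{m,r}{\bf y}$, so the minimum defining $\sigma_{r+l}(A_{m,r+l})$ is taken over a superset of vectors and hence is no larger than the minimum defining $\sigma_r(A_{m,r})$.

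Two comments on the exposition. First, your several false starts correctly diagnose that the paper's Theorem~\ref{thcndsb} alone cannot close the argument: it gives $\sigma_j(\text{matrix})\ge\sigma_j(\text{submatrix})$ for the \emph{same} index $j$, whereas here the indices $r$ and $r+l$ must be compared, and chaining through monotonicity in $j$ points the wrong way. What is really needed is the other half of the Cauchy interlacing inequality for column deletion, $\sigma_j(A_0)\ge\sigma_{j+l}(A)$, which Theorem~\ref{thcndsb} omits; your variational argument proves the case $j=r$ of this directly. (The source cited for Theorem~\ref{thcndsb}, \cite[Corollary~8.6.3]{GL13}, in fact contains both halves of the interlacing, so one could also just invoke the full statement from that reference.) Second, for a final write-up you should delete the meandering passage and present only the variational argument, and the phrase ``coordinate subspace spanned by the first $r$ columns'' should read ``coordinate subspace of $\mathbb R^{r+l}$ spanned by the first $r$ standard basis vectors,'' since the restriction is on the domain, not on the columns.
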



The following theorem will enable us to
estimate the norm $||(AH)^+||$.

\begin{theorem}\label{1}
Suppose $A\in \mathbb R^{n\times n}$,
 $H\in\mathbb R^{n\times r}$,
$\rank (A)=n\ge r$,  
 $A=S_A\Sigma_AT^T_A$ is SVD (cf. (\ref{eqsvd})), 
 and
$\widehat H=T_A^TH$.
 Then 
\begin{equation}\label{eqah}
\sigma_j(AH)\ge \sigma_l(A) ~\sigma_j(\widehat H_{l,r})~{\rm for~all}~l\le n~{\rm and~all}~j.
\end{equation}
\end{theorem}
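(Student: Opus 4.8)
The plan is to reduce the claim about $\sigma_j(AH)$ to the lemmas already established, by exploiting the SVD $A = S_A\Sigma_A T_A^T$ and the substitution $\widehat H = T_A^T H$. First I would write $AH = S_A\Sigma_A T_A^T H = S_A(\Sigma_A \widehat H)$. Since $S_A$ is a square orthogonal matrix, Lemma \ref{lepr2} gives $\sigma_j(AH) = \sigma_j(\Sigma_A \widehat H)$ for all $j$, so it suffices to bound $\sigma_j(\Sigma_A \widehat H)$ from below.

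Next I would fix $l \le n$ and split off the leading $l$ rows. Write $\Sigma_A = \diag(\sigma_1(A),\dots,\sigma_n(A))$ and let $\widehat H_{l,r}$ be the leading $l\times r$ block of $\widehat H$. The key observation is that the product $(\diag(\sigma_1(A),\dots,\sigma_l(A)))\,\widehat H_{l,r}$ is exactly the leading $l\times r$ submatrix of $\Sigma_A \widehat H$ (the diagonal scaling acts row by row, and the first $l$ rows of $\Sigma_A$ involve only $\sigma_1(A),\dots,\sigma_l(A)$). Therefore, by Theorem \ref{thcndsb} applied to this submatrix,
\[
\sigma_j(\Sigma_A \widehat H) \ge \sigma_j\big((\diag(\sigma_i(A))_{i=1}^{l})\,\widehat H_{l,r}\big)
\]
for all $j$. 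Now I would apply Lemma \ref{lepr1} with the diagonal matrix $\diag(\sigma_1(A),\dots,\sigma_l(A))$ in the role of $\Sigma$ (note its entries are already in nonincreasing order and its smallest entry is $\sigma_l(A)$) and $\widehat H_{l,r}$ in the role of $H$, obtaining
\[
\sigma_j\big((\diag(\sigma_i(A))_{i=1}^{l})\,\widehat H_{l,r}\big) \ge \sigma_l(A)\,\sigma_j(\widehat H_{l,r})
\]
for all $j$. Chaining the three inequalities yields $\sigma_j(AH) \ge \sigma_l(A)\,\sigma_j(\widehat H_{l,r})$, which is \eqref{eqah}. Since $l \le n$ was arbitrary, this completes the argument.

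The only genuinely delicate point is the submatrix identification: one must check carefully that $(\diag(\sigma_i(A))_{i=1}^{l})\widehat H_{l,r}$ is indeed a submatrix (not merely a factor) of $\Sigma_A\widehat H$, so that Theorem \ref{thcndsb} applies. This is immediate once one writes $\Sigma_A\widehat H$ in block form with the row partition $l, n-l$: the top block of rows is $(\diag(\sigma_i(A))_{i=1}^{l} \mid O)\,\widehat H = (\diag(\sigma_i(A))_{i=1}^{l})\,\widehat H_{l,r}$. Everything else is a direct invocation of Lemmas \ref{lepr2} and \ref{lepr1} and Theorem \ref{thcndsb}; no case analysis on the rank of $A$ or on positivity of $\sigma_l(A)$ is needed, since the inequality is trivially true (both sides zero or the bound vacuous) when $\sigma_l(A)=0$.
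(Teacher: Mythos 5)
Your proof is correct and follows essentially the same route as the paper's: invoke Lemma \ref{lepr2} via the orthogonality of $S_A$ to replace $AH$ by $\Sigma_A\widehat H$, identify $\Sigma_{l,A}\widehat H_{l,r}$ as the leading $l\times r$ submatrix of $\Sigma_A\widehat H$ so that Theorem \ref{thcndsb} applies, and finish with Lemma \ref{lepr1}. The only difference is that you spell out the submatrix identification more explicitly than the paper does, which is a welcome bit of extra care rather than a deviation.
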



\begin{proof}
Note that
 $AH=S_A\Sigma_AT_A^TH$, and so  
$\sigma_j(AH)=\sigma_j(\Sigma_AT_A^TH)=
\sigma_j(\Sigma_A\widehat H)$ for all $j$ 
by virtue of Lemma \ref{lepr2}, because 
$S_A$ is a square orthogonal matrix. 
Moreover it follows from Theorem \ref{thcndsb} that
$\sigma_j(\Sigma_A\widehat H)\ge 
\sigma_j(\Sigma_{l,A}\widehat H_{l,r})$
for all $l\le n$. Combine this bound with the latter equations
and apply Lemma \ref{lepr1}. 
\end{proof}


\begin{corollary}\label{cor1}
Keep the assumptions 
of Theorem \ref{1}. Then 

(i)  $\sigma_{r}(AH)\ge 
\sigma_{\rho}(A)\sigma_{r}(\widehat H_{n,r})=
\sigma_{r}(\widehat H_{n,r})/||A^+||$,

(ii) $||(AH)^+||\le ||A^+||~||\widehat H_{n,r}^+||$
if $\rank(AH)=\rank(\widehat H_{n,r})=r$.
\end{corollary}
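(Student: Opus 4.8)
The plan is to derive both parts of the corollary as direct specializations of Theorem \ref{1}, using the elementary facts about pseudo-inverses and singular values already recorded in the excerpt. For part (i), I would take $l=n$ and $j=r$ in the bound \eqref{eqah}. Since $\rank(A)=n=\rho$ by hypothesis, $\sigma_n(A)=\sigma_\rho(A)$ is the smallest positive singular value of $A$, and by \eqref{eqnrm+} we have $\sigma_\rho(A)=1/\|A^+\|$. Substituting $l=n$ into \eqref{eqah} gives $\sigma_r(AH)\ge\sigma_n(A)\,\sigma_r(\widehat H_{n,r})=\sigma_r(\widehat H_{n,r})/\|A^+\|$, which is exactly the claimed chain of (in)equalities in (i). The only minor point to check is that $\widehat H_{n,r}$ really is the full matrix $\widehat H=T_A^TH$, i.e. that taking $l=n$ collapses the leading block notation to the whole matrix; this is immediate since $H\in\mathbb R^{n\times r}$ so $\widehat H\in\mathbb R^{n\times r}$ already.

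For part (ii), I would invoke \eqref{eqnrm+} again, now applied to the matrix $AH$. Under the stated rank hypothesis $\rank(AH)=r$, the matrix $AH$ has full column rank $r$, so $\|(AH)^+\|=1/\sigma_r(AH)$. Likewise $\rank(\widehat H_{n,r})=r$ means $\widehat H_{n,r}$ has full column rank and $\|\widehat H_{n,r}^+\|=1/\sigma_r(\widehat H_{n,r})$. Combining these two identities with the inequality from part (i), namely $\sigma_r(AH)\ge\sigma_r(\widehat H_{n,r})/\|A^+\|$, and taking reciprocals (which reverses the inequality, legitimate because all quantities involved are strictly positive by the full-rank assumptions), yields
\[
\|(AH)^+\|=\frac{1}{\sigma_r(AH)}\le\frac{\|A^+\|}{\sigma_r(\widehat H_{n,r})}=\|A^+\|\,\|\widehat H_{n,r}^+\|,
\]
which is precisely (ii).

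There is essentially no serious obstacle here: the corollary is a bookkeeping consequence of Theorem \ref{1} together with \eqref{eqnrm+}. The one place to be careful is the consistent use of the convention $\sigma_\rho(A)=1/\|A^+\|$ versus $\sigma_n(A)$ — they coincide only because $A$ is assumed to have rank $n=\rho$, so $\rho=n$ and there is no gap — and making sure the positivity conditions needed to pass from singular-value inequalities to pseudo-inverse-norm inequalities are exactly the rank hypotheses listed in the statement. Once those are in place the argument is a two-line substitution, so I would present it concisely rather than belaboring the verifications.
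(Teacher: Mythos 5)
Your proposal matches the paper's proof essentially verbatim: part (i) comes from substituting $j=r$ and $l=n$ into bound (\ref{eqah}) and invoking (\ref{eqnrm+}), and part (ii) follows by applying (\ref{eqnrm+}) to $AH$ and $\widehat H_{n,r}$ under the full-rank hypotheses and taking reciprocals in the inequality from (i). The argument is correct and takes the same route as the paper.
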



\begin{proof}
Substitute $j=r$ and $l=n$
into bound (\ref{eqah}), recall
(\ref{eqnrm+}), and
obtain part (i).
If $\rank(AH)=\rank(\widehat H_{l,r})=r$,
then apply (\ref{eqnrm+})
to obtain that 
$\sigma_{r}(AH)=1/||(AH)^+||$ and 
$\sigma_{r}(\widehat H_{l,r})=1/||\widehat H_{l,r}^+||$. 
Substitute these equations 
into part (i) and obtain part (ii).
\end{proof}


Let us  extend the estimates of Theorem \ref{1}  to the leading
 blocks of a matrix product.


\begin{corollary}\label{cogh}
Keep the assumptions 
of Theorem \ref{1} and also suppose that
 the matrices  $(AH)_{k,k}$ and $\widehat H_{n,k}$ 
have full rank $k$ for
a positive integer
$k\le n$. 
Then  
 $$||(AH)_{k,k}^+||\le ||\widehat H_{n,k}^+||~||A_{k,n}^+||\le
||\widehat H_{n,k}^+||~||A^+||.$$ 
\end{corollary}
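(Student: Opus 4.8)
\textbf{Proof plan for Corollary \ref{cogh}.}
The plan is to reduce the bound on $\|(AH)_{k,k}^+\|$ to an instance of Corollary \ref{cor1}(ii) applied to a suitable smaller matrix, namely $A_{k,n}$ in place of $A$. First I would observe the factorization of the leading block: since $(AH)_{k,k}$ is the northwestern $k\times k$ block of $AH$, it equals $A_{k,n}H_{n,k}$, where $A_{k,n}$ is the first $k$ rows of $A$ and $H_{n,k}$ is the first $k$ columns of $H$. So the object we must bound is $(A_{k,n}H_{n,k})^+$, which is exactly the shape handled by Corollary \ref{cor1} once we identify the right ``$A$'' and ``$H$''.

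The subtlety is that Corollary \ref{cor1}(ii) is stated for a square nonsingular matrix in the left factor, whereas $A_{k,n}$ is $k\times n$ with $k\le n$; moreover the hypotheses of Theorem \ref{1} refer to the SVD $A=S_A\Sigma_AT_A^T$ of the original $A$ and to $\widehat H=T_A^TH$, not to the SVD of $A_{k,n}$. So the second step is to set up the correct comparison. I would apply Theorem \ref{1} (or directly Corollary \ref{cor1}) to the matrix $A_{k,n}$, which has full row rank $k$ (this follows because $A$ has rank $n$, hence every set of rows is independent, so $\rank(A_{k,n})=k$; alternatively it is forced by the hypothesis that $(AH)_{k,k}$ has rank $k$). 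Writing the SVD of $A_{k,n}$ and letting $\widehat H'$ be the corresponding transformed multiplier, Corollary \ref{cor1}(ii) gives $\|(A_{k,n}H_{n,k})^+\|\le \|A_{k,n}^+\|\,\|(\widehat H')_{k,k}^+\|$ provided the relevant ranks are $k$. The remaining task is then to replace $\|(\widehat H')_{k,k}^+\|$ by $\|\widehat H_{n,k}^+\|$, i.e.\ to relate the small transformed multiplier for $A_{k,n}$ to the transformed multiplier $\widehat H=T_A^TH$ for the full $A$.

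To make that replacement, I would argue at the level of singular values rather than chasing the two different right-singular-vector bases explicitly. The cleanest route is: apply Theorem \ref{1} with $l=k$ to $A_{k,n}$ and its multiplier $H_{n,k}$ to get $\sigma_k((AH)_{k,k})\ge \sigma_k(A_{k,n})\,\sigma_k\big((\widehat H')_{k,k}\big)$; but I also want a lower bound involving $\sigma_k(\widehat H_{n,k})$ directly. Here Theorem \ref{thcndsb} is the workhorse: $\widehat H_{n,k}$ is a submatrix (first $k$ columns) of $\widehat H$, and one shows $\sigma_k\big((\widehat H')_{k,k}\big)\ge \sigma_k(\widehat H_{n,k})$ by viewing both as images of $H_{n,k}$ under orthogonal transformations composed with coordinate projections — the key point being that $\widehat H_{n,k}=T_A^T H_{n,k}$ retains all singular values of $H_{n,k}$ while passing to $(\widehat H')_{k,k}$ only discards rows. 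Equivalently, one can bypass introducing $\widehat H'$ altogether by combining the bound of Theorem \ref{1} applied to $A_{k,n}$ with $\sigma_k(A_{k,n})=1/\|A_{k,n}^+\|$ and $\sigma_k(\widehat H_{n,k})=1/\|\widehat H_{n,k}^+\|$, and then finishing with $\|A_{k,n}^+\|\le\|A^+\|$, which is immediate from Theorem \ref{thcndsb} (or Theorem \ref{thintpr} with the roles of rows/columns transposed) since $A_{k,n}$ is a submatrix of $A$ so $\sigma_k(A_{k,n})\le\sigma_k(A)=\sigma_n(A)\cdot(\text{something})$... more precisely $\sigma_k(A_{k,n})\ge\sigma_n(A)$ is \emph{not} what we need; rather we need $\sigma_{\min}(A_{k,n})\ge\sigma_{\min}(A)$, which indeed follows from $\sigma_j(A)\ge\sigma_j(A_{k,n})$ applied at the appropriate index together with $\rank(A_{k,n})=k\le n=\rank(A)$, giving $\|A_{k,n}^+\|=1/\sigma_k(A_{k,n})\le 1/\sigma_n(A)=\|A^+\|$.

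The main obstacle I anticipate is precisely this last bookkeeping with indices and with the two distinct orthogonal bases ($T_A$ for $A$ versus the right singular vectors of $A_{k,n}$): one must be careful that ``full rank $k$'' of $(AH)_{k,k}$ and of $\widehat H_{n,k}$ is exactly what licenses the passage from the singular-value inequality (\ref{eqah}) to an inequality between pseudo-inverse norms via (\ref{eqnrm+}). Once the rank hypotheses are in place, the estimate is a two-line consequence of Theorem \ref{1} (with $l=k$), equation (\ref{eqnrm+}), and Theorem \ref{thcndsb}; no genuinely new idea beyond Corollary \ref{cor1} is required, only the observation that Corollary \ref{cor1} may be applied to the rectangular leading row-block $A_{k,n}$ rather than to $A$ itself.
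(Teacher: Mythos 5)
Your proposal correctly isolates the subtlety that the paper's own proof glosses over: applying Corollary \ref{cor1} with $A_{k,n}$ in place of $A$ produces a bound in terms of $\widehat H'=T_{A_{k,n}}^T H_{n,k}$, built from the right singular vectors of the rectangular block $A_{k,n}$, and not the submatrix $\widehat H_{n,k}=(T_A^TH)_{n,k}$ that appears in the statement of Corollary \ref{cogh}. But the replacement step you then propose, namely $\sigma_k\big((\widehat H')_{k,k}\big)\ge\sigma_k(\widehat H_{n,k})$, goes in exactly the wrong direction, and your own justification shows why: passing to $(\widehat H')_{k,k}$ ``only discards rows,'' and by Theorem \ref{thcndsb} discarding rows can only \emph{decrease} the $k$-th singular value, giving $\sigma_k\big((\widehat H')_{k,k}\big)\le\sigma_k(\widehat H')=\sigma_k(H_{n,k})=\sigma_k(\widehat H_{n,k})$. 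The ``bypass'' you sketch fails for the same reason: Theorem \ref{1} applied to the $k\times n$ matrix $A_{k,n}$ only admits $l\le k$ in (\ref{eqah}), so the right-hand side involves a $k\times k$ truncation of the $A_{k,n}$-adapted multiplier, and no chain of submatrix inequalities upgrades that truncation to the $n\times k$ block $\widehat H_{n,k}$.

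The gap cannot be patched, because the first inequality of Corollary \ref{cogh} is actually false as stated. Take $n=2$, $r=k=1$, $A=I_2$, $H=(1,1)^T$: then $T_A=I$, both $\widehat H_{2,1}=(1,1)^T$ and $(AH)_{1,1}=(1)$ have full rank $1$, and $\|(AH)_{1,1}^+\|=1$ while $\|\widehat H_{2,1}^+\|\,\|A_{1,2}^+\|=(1/\sqrt2)\cdot 1<1$. The paper's proof commits precisely the conflation you noticed, applying Corollary \ref{cor1} (whose hypotheses require a square nonsingular left factor) to the rectangular $A_{k,n}$ and then silently identifying $T_{A_{k,n}}$ with $T_A$. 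What does follow, from the thin SVD $A_{k,n}=U\Sigma V^T$ with $V\in\mathbb R^{n\times k}$ orthonormal together with Lemma \ref{lepr1}, is
\[
\|(AH)_{k,k}^+\|\le\|A_{k,n}^+\|\,\|(V^TH_{n,k})^{-1}\|\le\|A^+\|\,\|(V^TH_{n,k})^{-1}\|,
\]
and since $V^T$ has orthonormal rows, $V^TH_{n,k}$ is a $k\times k$ Gaussian matrix when $H$ is Gaussian (Lemma \ref{lepr3}). Hence the correct replacement for $\nu_{n,k}^+$ in Corollary \ref{cor10}(iii) is $\nu_{k,k}^+$ — a weaker bound, since a square Gaussian has heavier condition-number tails than a tall one, but still enough to support the qualitative conclusion of Corollary \ref{cogmforalg}.
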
 


\begin{proof}
Note that 
$(AH)_{k,k}=A_{k,n}H_{n,k}$
and that the matrix $A_{k,n}$ has full rank.
Apply Corollary \ref{cor1}  for $A$ and $H$ replaced by 
$A_{k,n}$ and $H_{n,k}$, respectively,
and obtain that $||(AH)_{k,k}^+||\le ||\widehat H_{k,n}^+||~||A_{n,k}^+||$.
Combine (\ref{eqnrm+}) and Theorem \ref{thintpr} and deduce that 
$||A_{n,k}^+||\le||A^+||$. Combine  the two latter 
inequalities 
to complete the proof of part (i). 
Similarly prove part (ii).
\end{proof}


Fact \ref{faprepr}, 
Corollary \ref{corec} and Theorem \ref{thnorms} together imply the
following result.


\begin{corollary}\label{colocsf}
 Suppose that
$A\in \mathbb R^{n\times n}$,
$H\in\mathbb R^{n\times r}$,
 $r\le n=\rank (A)$,  and 
 the matrices $(AH)_{k,k}$ 
are   
strongly nonsingular and
   strongly well-con\-di\-tioned
for $k=1,\dots,r$. Then
 GENP  and 
block Gaussian elimination
are locally safe for the
matrix product $AH$
 (see Remark \ref{reir} on the concept ``locally safe").
\end{corollary}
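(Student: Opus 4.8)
The plan is to deduce the claim from the two facts emphasized in Remark~\ref{reir}: that the recursive block factorization of $AH$ can be carried through without any singular pivot block, and that every pivot block arising in it, together with its inverse, has a reasonably bounded norm, so that by Theorem~\ref{thpert} each inversion performed inside GENP or block Gaussian elimination is numerically safe. Since $AH\in\mathbb R^{n\times r}$ with $n\ge r$, only $r$ elimination steps occur, and --- as noted in Remark~\ref{rerect} --- the process is controlled entirely by the leading blocks $(AH)^{(j)}$, $j\le r$, and the Schur complements built from them.

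First I would dispatch completability: the hypothesis that each $(AH)_{k,k}$, $k=1,\dots,r$, is strongly nonsingular says exactly that $(AH)^{(j)}$ is nonsingular for every $j\le r$, i.e.\ that the leading block $M:=(AH)^{(r)}$ is strongly nonsingular. Hence Corollary~\ref{corec} guarantees that the factorization based on (\ref{eqgenp}), and in particular its scalar specialization GENP, runs to completion with no singular pivot block and no vanishing pivot element.

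Next I would bound the blocks by applying Theorem~\ref{thnorms} to $M$, with $N=\|M\|$ and $N_-=\max_{j=1}^{r}\|((AH)^{(j)})^{-1}\|$. By Theorem~\ref{thsch} every pivot block occurring in the factorization of $M$ is either one of the $(AH)^{(j)}$ or a Schur complement $S((AH)^{(h)},(AH)^{(k)})$, $h<k\le r$, assembled from them; Theorem~\ref{thnorms} bounds the norm of each by $N_+=N+N_-N^2$ and the norm of its inverse by $N_-$, hence its condition number by $N_+N_-$. The hypothesis that every $(AH)^{(j)}$ is well-conditioned gives a reasonable bound on $N_-$, so $N_+N_-$ is moderate, and Theorem~\ref{thpert} then makes each inversion of a pivot block numerically safe. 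This is precisely the assertion that GENP and block Gaussian elimination are locally safe for $AH$; since Theorem~\ref{thnorms} treats the pivot blocks of both algorithms uniformly, no separate argument is needed for the block case.

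The one point that calls for a little care --- and the reason the argument is not quite a one-line concatenation of the cited results --- is the passage from Theorem~\ref{thnorms}, stated for a square input, to the rectangular $n\times r$ factor $AH$. Here it suffices to observe that the pivot elements and blocks produced by GENP (resp.\ block Gaussian elimination) on $AH$ coincide with those produced on the square leading block $M=(AH)^{(r)}$, since the rows and columns of $AH$ indexed beyond $r$ are merely updated during elimination and never chosen as pivots; thus Theorem~\ref{thnorms} applies verbatim to $M$. Finally, Fact~\ref{faprepr} records that a numerically safe elimination on the preprocessed matrix $AH$ recovers the solution of a system $A{\bf x}={\bf b}$ via ${\bf x}=H{\bf y}$.
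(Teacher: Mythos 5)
Your proof is correct and follows essentially the same route as the paper, which simply states that Fact~\ref{faprepr}, Corollary~\ref{corec} and Theorem~\ref{thnorms} together imply the result; you have usefully filled in the details (completability via Corollary~\ref{corec}, norm bounds via Theorems~\ref{thsch} and~\ref{thnorms}, the appeal to Theorem~\ref{thpert} through Remark~\ref{reir}) and explicitly handled the reduction from the rectangular $n\times r$ matrix $AH$ to its square $r\times r$ leading block, a point the paper relegates to Remark~\ref{rerect}.
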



\section{Benefits of using Gaussian multipliers for GENP and block Gaussian elimination
}\label{sapsr1}


 In  Section
\ref{srrm} we recall the norm and condition estimates for  
Gaussian matrices and deduce that these matrices are
strongly nonsingular with probability 1 and
are expected to be strongly well-conditioned.
In Section \ref{ssgnp} we prove
that the pair $(H,A)$
for a nonsingular and well conditioned  matrix $A$ and a
Gaussian matrix $H$ is expected to
satisfy the assumptions of Corollary \ref{colocsf},  
implying that the application of 
GENP  and block Gaussian elimination 
to the product $AH$ is  numerically safe.


\begin{remark}\label{remu0}
The above results do not hold 
if the mean greatly exceeds standard deviation 
of the i.i.d. entries of a
multipliers $H$. Its power for
 achieving numerically safe GENP
and block Gaussian elimination
is usually lost in this case.
Indeed assume 
 a mean $\mu$ and a standard deviation
$\sigma$ such that
$\mu\gg \sigma$ (already   
$\mu>10 \sigma\log (n)$ is suficient).
In this case the matrix $H$ is expected to be 
closely approximated by the rank-1 matrix $\mu {\bf e}{\bf e}^T$
where ${\bf e}^T=(1,1,\dots,1)$.
\end{remark}


\subsection{A Gaussian 
matrix, its rank, norm and condition estimates}\label{srrm}


\begin{definition}\label{defrndm}
A matrix is said to be {\em standard Gaussian random} 
(hereafter we say just
{\em Gaussian}) if it is filled with i.i.d.
Gaussian random
variables  having mean $0$ and variance $1$. 
\end{definition}


\begin{theorem}\label{thdgr1}
A Gaussian matrix $G$ is strongly nonsingular 
with probability 1.
\end{theorem}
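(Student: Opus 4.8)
The plan is to show that for a Gaussian matrix $G \in \mathbb{R}^{n \times n}$, each leading block $G^{(k)} = G_{k,k}$ is nonsingular almost surely, and then take a union over the finitely many values $k = 1, \dots, n$. Fix $k$. The determinant $\det G^{(k)}$ is a polynomial in the $k^2$ entries of $G^{(k)}$, and it is not the identically zero polynomial (e.g.\ it is $1$ at the identity matrix, or one simply notes the monomial $g_{1,1}g_{2,2}\cdots g_{k,k}$ appears with coefficient $1$). Hence the zero set $\{\det G^{(k)} = 0\}$ is a proper algebraic subvariety of $\mathbb{R}^{k^2}$, which therefore has Lebesgue measure zero.

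Next I would invoke the fact that the joint distribution of the entries of $G$ (and in particular of the entries of $G^{(k)}$) is absolutely continuous with respect to Lebesgue measure on $\mathbb{R}^{k^2}$, since the density is the product of standard normal densities, a strictly positive continuous function. A set of Lebesgue measure zero therefore has probability zero under this distribution, so $\mathrm{Probability}(\det G^{(k)} = 0) = 0$, i.e.\ $G^{(k)}$ is nonsingular with probability $1$.

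Finally I would combine these $n$ statements: the event that $G$ fails to be strongly nonsingular is $\bigcup_{k=1}^{n} \{\det G^{(k)} = 0\}$, a finite union of probability-zero events, hence itself of probability zero. Therefore $G$ is strongly nonsingular with probability $1$, which is the claim.

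The only mild subtlety — not really an obstacle — is making precise that $\det G^{(k)}$ is a nonzero polynomial; this is immediate from the Leibniz expansion, since the diagonal term $\prod_{i=1}^{k} g_{i,i}$ is one of the monomials and no other term of the expansion cancels it. Everything else is a routine appeal to the measure-zero property of proper algebraic varieties together with absolute continuity of the Gaussian density. One could alternatively phrase the argument inductively using the Schur complement / block factorization (\ref{eqgenp}), conditioning on the nonsingularity of $G^{(k-1)}$ and noting that the next pivot is a nondegenerate affine-linear function of a fresh Gaussian entry, but the polynomial-vanishing argument is the cleanest.
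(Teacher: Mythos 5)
Your proof is correct and follows essentially the same route as the paper's: the determinant of each leading block is a polynomial in the Gaussian entries, its zero set is a proper algebraic variety hence Lebesgue-null, and by absolute continuity of the Gaussian density the event has probability zero, so a finite union over block sizes finishes. You add one small refinement the paper leaves implicit, namely an explicit reason the determinant polynomial is not identically zero (the diagonal monomial, or evaluation at the identity); this tightens an unstated step but does not change the argument.
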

\begin{proof}
Assume that the $j\times j$ leading submatrix $G^{(j)}$ of a $k\times l$
Gaussian matrix $G$ is singular for some positive integer $j\le h=\min\{k,l\}$,
that is, $\det(G^{(j)})=0$. 
Since $\det(G^{(j)})$ is a polynomial in the entries of the Gaussian matrix 
 $G^{(j)}$, such matrices form 
an algebraic variety of a lower dimension  in the linear space 
$\mathbb R^{j^2}$. 
($V$ is an algebraic variety of a dimension $d\le N$
in the space $\mathbb  R^{N}$ if it is defined by $N-d$
polynomial equations and cannot be defined by fewer equations.)
Clearly, Lebesgue (uniform) and
Gaussian measures of such a variety equal 0,  being absolutely continuous
with respect to one another. 
Hence these measures of the union of $h$ such matrices
are also 0.
\end{proof}


\begin{theorem}\label{thdgr}
Assume a nonsingular $n\times n$ matrix $A$ and an $n\times k$  
 Gaussian matrix $H_{n,k}$.
Then the product $A_{k,n}H_{n,k}$ is nonsingular with probability 1.
\end{theorem}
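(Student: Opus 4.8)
The plan is to reduce the claim to Theorem~\ref{thdgr1} applied to a Gaussian matrix together with Theorem~\ref{1} (or rather Lemma~\ref{lepr1}, its engine) to pass from $H_{n,k}$ to the product $A_{k,n}H_{n,k}$. Write $A_{k,n}$ for the leading $k\times n$ block of $A$; since $A$ is nonsingular, $A_{k,n}$ has full row rank $k$. Let $A_{k,n}=S\Sigma T^T$ be an SVD, with $S$ a $k\times k$ orthogonal matrix, $\Sigma=\diag(\sigma_1,\dots,\sigma_k)$ having all $\sigma_i>0$ (full row rank), and $T$ an $n\times k$ matrix with orthonormal columns, i.e.\ $T^TT=I_k$. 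Then $A_{k,n}H_{n,k}=S\,\Sigma\,(T^TH_{n,k})$, and since $S$ is square orthogonal and $\Sigma$ is nonsingular, this product is nonsingular if and only if $\widehat H:=T^TH_{n,k}$ is a nonsingular $k\times k$ matrix.

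So the whole question is whether $\widehat H=T^TH_{n,k}$ is nonsingular with probability~$1$. Here is where the Gaussian hypothesis does its work: if $H_{n,k}$ is an $n\times k$ Gaussian matrix, then for any fixed matrix $T\in\mathbb R^{n\times k}$ with orthonormal columns, the $k\times k$ matrix $\widehat H=T^TH_{n,k}$ is again a Gaussian matrix. Indeed, each column of $\widehat H$ is $T^T$ applied to an independent standard Gaussian vector in $\mathbb R^n$, and applying a matrix with orthonormal rows to a standard Gaussian vector yields a standard Gaussian vector in $\mathbb R^k$ (the covariance is $T^T(T^T)^T=T^TT=I_k$); the columns remain independent because the columns of $H_{n,k}$ are. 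By Theorem~\ref{thdgr1} a $k\times k$ Gaussian matrix is nonsingular with probability~$1$ (it is strongly nonsingular, and in particular its full $k\times k$ leading block is nonsingular). Hence $\widehat H$ is nonsingular with probability~$1$, and therefore so is $A_{k,n}H_{n,k}=S\Sigma\widehat H$.

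The one point deserving care — the ``main obstacle,'' though it is mild — is the rotational-invariance step: verifying that $T^TH_{n,k}$ is genuinely a standard Gaussian matrix rather than merely a mean-zero Gaussian matrix with some other covariance. This rests precisely on $T$ having \emph{orthonormal} columns, which in turn rests on $A$ being nonsingular so that the SVD of $A_{k,n}$ has a right-singular-vector matrix with $T^TT=I_k$. If one prefers to avoid invoking rotational invariance of the Gaussian distribution, an alternative is purely algebraic: $\det(A_{k,n}H_{n,k})$ is a polynomial in the $nk$ entries of $H_{n,k}$, and it is not identically zero (choose $H_{n,k}$ so that its rows span a complementary way — e.g.\ pick $k$ columns of $A_{k,n}^+$ times a nonsingular matrix, or just exhibit one $H_{n,k}$ making the product equal $I_k$, which exists since $A_{k,n}$ is right-invertible); a nonzero polynomial vanishes only on a proper algebraic subvariety, which has Lebesgue and hence Gaussian measure zero, exactly as in the proof of Theorem~\ref{thdgr1}. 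Either route closes the argument; I would present the SVD/rotational-invariance version as the cleaner one and remark on the polynomial version.
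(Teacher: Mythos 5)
Your proposal is correct, and your primary route differs from the paper's. The paper proves this theorem in one stroke by the polynomial-vanishing argument: $\det(A_{k,n}H_{n,k})$ is a polynomial in the entries of $H_{n,k}$; it is not identically zero because setting $H_{n,k}=A_{k,n}^T$ gives $\det(A_{k,n}A_{k,n}^T)>0$ (a positive-definite Gram matrix); and a nonzero polynomial vanishes on a set of Gaussian measure zero, exactly as in Theorem~\ref{thdgr1}. That is precisely the ``alternative'' you sketch at the end, so you have in fact reproduced the paper's argument as a fallback. Your main line instead factors $A_{k,n}=S\Sigma T^T$, reduces nonsingularity of $A_{k,n}H_{n,k}$ to that of $\widehat H=T^TH_{n,k}$, and invokes rotational invariance (the paper's Lemma~\ref{lepr3}) to see that $\widehat H$ is Gaussian and then Theorem~\ref{thdgr1} to conclude. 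Both arguments are sound; the paper's is shorter and needs no SVD machinery, while yours has the advantage that it previews, in miniature, the exact structure used in Corollaries~\ref{cogh} and~\ref{cor10}, where Lemma~\ref{lepr3} is genuinely indispensable rather than optional. One small stylistic point: your SVD route actually proves more than stated — it shows $T^TH_{n,k}$ is a bona fide $k\times k$ Gaussian matrix, information that the polynomial route discards — which is precisely what the subsequent norm bound in part (iii) of Corollary~\ref{cor10} needs, so presenting the SVD route here would make the later corollary read as an immediate continuation rather than a fresh start.
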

\begin{proof}
$\det(A_{k,n}H_{n,k})$ is a polynomials in the entries of the Gaussian matrix $H_{n,k}$.
Such a polynomial vanishes with probability 0 unless it vanishes identically in  $H_{n,k}$,
but  the matrix 
$A_{k,n}A_{k,n}^T$ is positive definite, and so
$\det(A_{k,n}H_{n,k})>0$ for $H_{n,k}=A_{k,n}^T$.
\end{proof}


\begin{definition}\label{defnu}
 $\nu_{j,m,n}$ denotes the random variables
$\sigma_j(G)$ for a Gaussian $m\times n$ matrix $G$
and all $j$, while
$\nu_{m,n}$, $\nu_{F,m,n}$,  $\nu_{m,n}^+$, and $\kappa_{m,n}$ denote the 
 random variables
 $||G||$,  $||G||_F$, 
 $||G^+||$, and $\kappa(G)=||G||~||G^+||$, respectively.
\end{definition}

Note that $\nu_{j,n,m}=\nu_{j,m,n}$, $\nu_{n,m}=\nu_{m,n}$,
$\nu_{n,m}^+=\nu_{m,n}^+$, and $\kappa_{n,m}=\kappa_{m,n}$.

\begin{theorem}\label{thsignorm}
(Cf. \cite[Theorem II.7]{DS01}.)
Suppose 
$h=\max\{m,n\}$, $t\ge 0$,  and
$z\ge 2\sqrt {h}$. 
Then  ${\rm Probability}~\{\nu_{m,n}>z\}\le
\exp(-(z-2\sqrt {h})^2/2\}$ and 
${\rm Probability}~\{\nu_{m,n}>t+\sqrt m+\sqrt n\}\le
\exp(-t^2/2)$.
\end{theorem}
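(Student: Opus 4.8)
The plan is to derive both inequalities from two classical facts about a Gaussian $m\times n$ matrix $G$: a Lipschitz concentration bound and an estimate of the expected spectral norm $\mathbb{E}\,\nu_{m,n}=\mathbb{E}\,||G||$.

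First I would observe that the map $G\mapsto ||G||$ from $\mathbb R^{m\times n}$, identified with $\mathbb R^{mn}$ equipped with the Frobenius inner product, to $\mathbb R$ is $1$-Lipschitz, since $\big|\,||G||-||G'||\,\big|\le ||G-G'||\le ||G-G'||_F$. Applying the Gaussian concentration inequality (the Borell--Tsirelson--Ibragimov--Sudakov inequality) to this $1$-Lipschitz function of the $mn$ i.i.d.\ standard Gaussian entries of $G$ gives, for every $t\ge 0$,
\begin{equation}\label{eqconc}
\prob\{\nu_{m,n}>\mathbb{E}\,\nu_{m,n}+t\}\le\exp(-t^2/2).
\end{equation}

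Second I would bound $\mathbb{E}\,\nu_{m,n}$. Writing $\nu_{m,n}=||G||=\sup_{||x||=||y||=1}x^TGy$, I would compare the centered Gaussian process $X_{x,y}=x^TGy$ with $Y_{x,y}=g^Tx+h^Ty$, where $g\in\mathbb R^m$ and $h\in\mathbb R^n$ are independent standard Gaussian vectors; the increment-variance comparison required by Slepian's lemma (in Gordon's form) holds, so $\mathbb{E}\sup_{x,y}X_{x,y}\le\mathbb{E}\sup_{x,y}Y_{x,y}=\mathbb{E}||g||+\mathbb{E}||h||\le\sqrt m+\sqrt n$, the last step by Jensen's inequality, $\mathbb{E}||g||\le(\mathbb{E}||g||^2)^{1/2}=\sqrt m$. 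Substituting $\mathbb{E}\,\nu_{m,n}\le\sqrt m+\sqrt n$ into \eqref{eqconc} yields, for $t\ge 0$,
\begin{equation}\label{eqsecond}
\prob\{\nu_{m,n}>t+\sqrt m+\sqrt n\}\le\prob\{\nu_{m,n}>t+\mathbb{E}\,\nu_{m,n}\}\le\exp(-t^2/2),
\end{equation}
which is the second assertion. For the first, since $\sqrt m+\sqrt n\le 2\sqrt h$ with $h=\max\{m,n\}$, I would set $t=z-2\sqrt h\ge 0$ and apply \eqref{eqsecond}: $\prob\{\nu_{m,n}>z\}\le\prob\{\nu_{m,n}>(z-2\sqrt h)+\sqrt m+\sqrt n\}\le\exp(-(z-2\sqrt h)^2/2)$.

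The main obstacle is the expectation estimate: the concentration step and the final rescaling are routine, whereas $\mathbb{E}\,\nu_{m,n}\le\sqrt m+\sqrt n$ rests on a Gaussian comparison inequality whose hypothesis — the pointwise comparison of the increment variances of the two processes $X$ and $Y$ — must be verified. Alternatively, one may simply invoke \cite[Theorem~II.7]{DS01} or the known bound $\mathbb{E}\,\nu_{m,n}\le\sqrt m+\sqrt n$ directly and retain only the concentration inequality \eqref{eqconc} and the rescaling argument.
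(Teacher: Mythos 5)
The paper does not include a proof of this theorem; it is stated directly with a citation to Davidson and Szarek \cite{DS01}, and your argument reproduces precisely the proof given there (Gaussian concentration of the $1$-Lipschitz map $G\mapsto\|G\|$ about its mean, the mean estimate $\mathbb E\,\nu_{m,n}\le\sqrt m+\sqrt n$ via Gaussian comparison against $g^Tx+h^Ty$, and the reduction of the first tail bound to the second using $\sqrt m+\sqrt n\le 2\sqrt{\max\{m,n\}}$). One small point of precision worth noting: since $X_{x,y}=x^TGy$ and $Y_{x,y}=g^Tx+h^Ty$ have unequal variances ($1$ versus $2$), you need the Sudakov--Fernique (or Gordon/Chevet) form of the comparison principle rather than Slepian's lemma in its strict form, and the increment hypothesis you flag does check out:
\begin{align*}
\mathbb E\bigl(X_{x,y}-X_{x',y'}\bigr)^2 &= \|xy^T-x'{y'}^T\|_F^2 = 2-2\langle x,x'\rangle\langle y,y'\rangle,\\
\mathbb E\bigl(Y_{x,y}-Y_{x',y'}\bigr)^2 &= \|x-x'\|^2+\|y-y'\|^2 = 4-2\langle x,x'\rangle-2\langle y,y'\rangle,
\end{align*}
and the required inequality $\mathbb E(X_{x,y}-X_{x',y'})^2\le\mathbb E(Y_{x,y}-Y_{x',y'})^2$ is equivalent to $(1-\langle x,x'\rangle)(1-\langle y,y'\rangle)\ge 0$, which holds whenever $x,x',y,y'$ are unit vectors. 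With that verification supplied, the proposal is complete and correct.
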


\begin{theorem}\label{thsiguna} 
(Cf. \cite[Proof of Lemma 4.1]{CD05}.)
Suppose 
$m\ge n\ge 2$, and $x>0$ and write $\Gamma(x)=
\int_0^{\infty}\exp(-t)t^{x-1}dt$ 
and $\zeta(t)=
t^{m-1}m^{m/2}2^{(2-m)/2}\exp(-mt^2/2)/\Gamma(m/2)$. 
Then 
 ${\rm Probability}~\{\nu_{m,n}^+\ge m/x^2\}<\frac{x^{m-n+1}}{\Gamma(m-n+2)}$.
\end{theorem}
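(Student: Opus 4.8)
The plan is to bound $\nu_{m,n}^+ = \|G^+\| = 1/\sigma_n(G)$ for an $m \times n$ Gaussian matrix $G$ with $m \ge n$, by controlling the probability that the smallest singular value $\sigma_n(G)$ is tiny. First I would reduce the problem to a single Gaussian vector: since $\sigma_n(G) = \min_{\|x\|=1} \|Gx\|$, and since for a fixed unit vector $x$ the vector $Gx$ is an $m$-dimensional standard Gaussian vector, the key geometric fact is that $\sigma_n(G)$ is at least the distance from the last column (or any row, after transposing to the $n \times m$ picture) to the span of the others. Concretely, I would use that $1/\sigma_n(G)^2 \le \sum_{j=1}^n 1/\operatorname{dist}_j^2$ where $\operatorname{dist}_j$ is the distance from the $j$-th row of $G$ to the span of the remaining $n-1$ rows; each $\operatorname{dist}_j$, conditioned on the other rows, is the norm of the projection of an $m$-dimensional Gaussian onto a fixed subspace of dimension $m-n+1$, hence is distributed as the square root of a $\chi^2$ with $m-n+1$ degrees of freedom. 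This is the Chen--Dongarra approach cited in the statement, so I would follow \cite{CD05} rather than re-derive it from scratch.

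The main computation is then to estimate $\mathrm{Probability}\{\sigma_n(G) \le x/\sqrt{m}\}$, equivalently $\mathrm{Probability}\{\nu_{m,n}^+ \ge \sqrt{m}/x\}$, and the theorem as stated squares this: it asserts $\mathrm{Probability}\{\nu_{m,n}^+ \ge m/x^2\} < x^{m-n+1}/\Gamma(m-n+2)$. So I would substitute and work with the event $\{\sigma_n(G) \le x^2/m\}$; after the distance reduction, this is dominated by the probability that a $\chi_{m-n+1}$ variable (or the relevant product/ratio of Gamma-type densities coming from the Jacobian of the singular value decomposition) falls below a small threshold proportional to $x^2$. The density $\zeta(t)$ introduced in the statement is exactly the density of $\sigma_n$ of a square-ish Gaussian block near $0$; integrating it against the small threshold and using $\int_0^\epsilon t^{m-n} \, dt = \epsilon^{m-n+1}/(m-n+1)$ together with the normalization constants produces the $x^{m-n+1}/\Gamma(m-n+2)$ bound. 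I would be careful that the bound is claimed as a strict inequality valid for all $x>0$ (not just small $x$), which forces the estimate to be of the clean "tail of a Gamma density integrated from $0$" type rather than an asymptotic one; the point is that $t^{m-n}e^{-mt^2/2}\,m^{m/2}2^{(2-m)/2}/\Gamma(m/2)$, integrated over $[0,\epsilon]$, is bounded above by dropping the exponential factor, and the resulting monomial integral gives precisely the stated right-hand side once one identifies $\epsilon$ with the appropriate power of $x$.

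The step I expect to be the main obstacle is the exact bookkeeping of constants: matching the normalization $m^{m/2} 2^{(2-m)/2}/\Gamma(m/2)$ in $\zeta(t)$ with the Gamma factor $\Gamma(m-n+2)$ in the final bound, and verifying that the bound on $\sigma_n$ via the minimum-over-rows distance argument loses nothing worse than what is absorbed into these constants. In particular I would need the inequality $\Gamma(m/2) \cdot (\text{stuff}) \le \Gamma(m-n+2)$ in the right direction, which is where the ratio $x^{m-n+1}$ rather than $x^{m-n}$ appears (the extra power comes from the monomial integration, the $+1$ shift in the Gamma argument matches $\int_0^\epsilon t^{m-n}\,dt$). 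Everything else — the SVD Jacobian giving the density $\zeta$, the Gaussian-vector-projection distribution, and the monotonicity $\sigma_n(G) \ge \min_j \operatorname{dist}_j / \sqrt{n}$ — is standard and I would cite \cite{CD05} and \cite{GL13} for it rather than reproduce the derivations in full.
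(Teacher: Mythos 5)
The paper itself gives no proof of this theorem: it only cites the proof of Lemma~4.1 of Chen--Dongarra \cite{CD05}. So the relevant comparison is with the Chen--Dongarra argument, which works directly with the exact joint density of the Wishart eigenvalues (in the tradition of Edelman) and integrates the marginal density of the smallest eigenvalue near zero. Your proposal instead follows the Sankar--Spielman--Teng ``distance to the span of the other columns'' route. These are genuinely different strategies; in fact the paper flags the distinction explicitly in Section~\ref{srel}, saying it refines the estimates \emph{by following \cite{CD05} rather than \cite{SST06}}. So the plan is not the plan you announce you are following.

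Beyond the mismatch of approach, three concrete points in the proposal fail. First, your identification of $\zeta(t)$ as ``the density of $\sigma_n$ of a square-ish Gaussian block near $0$'' is not correct: a direct change of variables shows $\zeta$ is the density of $\chi_m/\sqrt{m}$, the scaled Euclidean norm of a single Gaussian column in $\mathbb{R}^m$. Near $0$ it vanishes like $t^{m-1}$, whereas the density of $\sigma_n$ for $m\ge n\ge 2$ vanishes like $t^{m-n}$; integrating $\zeta$ over a small interval therefore cannot produce the exponent $m-n+1$ in the stated bound. Second, the distance reduction you invoke, when carried out quantitatively, runs as follows: from $\sigma_n(G)\ge n^{-1/2}\min_j\mathrm{dist}_j$ and a union bound over the $n$ columns one obtains
$$\mathrm{P}\bigl[\nu_{m,n}^+\ge m/x^2\bigr]\le n\,\mathrm{P}\Bigl[\chi_{m-n+1}\le \sqrt{n}\,x^2/m\Bigr]\le \frac{n^{(m-n+3)/2}\,x^{2(m-n+1)}}{(m-n+1)\,2^{(m-n-1)/2}\,\Gamma\!\left(\tfrac{m-n+1}{2}\right)\,m^{m-n+1}},$$
which scales like $x^{2(m-n+1)}$, not $x^{m-n+1}$. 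For small $x$ this is stronger than the claimed bound, but for $x$ in an intermediate range (already $m=n$, $x$ of order a constant shows this) it \emph{exceeds} $x^{m-n+1}/\Gamma(m-n+2)$, so it does not establish the theorem, which is asserted for all $x>0$. Third, the $\Gamma(m-n+2)$ in the target is the normalization of the Wishart marginal density, and it does not arise naturally from the $\chi_{m-n+1}$ density plus a union bound: one is left with a factor $n^{(m-n+3)/2}$ and a $\Gamma\!\left(\tfrac{m-n+1}{2}\right)$ that have nowhere to go. You correctly flagged the constant bookkeeping as the main obstacle, but the obstacle is genuine and the proposal does not overcome it; the fix is to abandon the distance/union-bound route and instead bound the exact density of $\sigma_n$ (equivalently, the smallest Wishart eigenvalue) as Chen--Dongarra do.
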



The following condition estimates
 from \cite[Theorem 4.5]{CD05}
are quite tight for large values $x$, but for $n\ge 2$ even tighter
estimates (although more involved)
 can be found in \cite{ES05}. (See \cite{D88} and  \cite{E88}
on the early study.)


\begin{theorem}\label{thmsiguna}   
If $m\ge n\ge 2$, then 
$${\rm Probability}~\{\kappa_{m,n}m/(m-n+1)>x\}\le \frac{1}{2\pi}(6.414/x)^{m-n+1}$$
for $x\ge m-n+1$, while $\kappa_{m,1}=1$
with probability 1.
\end{theorem}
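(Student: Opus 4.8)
The plan is to separate the trivial case $n=1$ from the substantive case $m\ge n\ge 2$. When $n=1$, an $m\times 1$ Gaussian matrix $G$ is a single random column, which is nonzero with probability $1$; its SVD then has exactly one nonzero singular value, namely $\|G\|$, which is simultaneously the largest and the smallest nonzero singular value, so $\kappa_{m,1}=1$ with probability $1$, as claimed. For $m\ge n\ge 2$ I would start from the identity $\kappa_{m,n}=\nu_{m,n}\,\nu_{m,n}^+$, which holds with probability $1$ because a Gaussian matrix has full column rank almost surely (cf. Theorem \ref{thdgr1}) so that $\nu_{m,n}=\sigma_1(G)$ and $\nu_{m,n}^+=1/\sigma_n(G)$ by (\ref{eqnrm+}); thus a tail bound on $\kappa_{m,n}$ follows from simultaneous control of the largest and the smallest singular value of $G$.

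For any threshold $\theta>0$ and any splitting level $a>0$ one has $\{\nu_{m,n}\nu_{m,n}^+>\theta\}\subseteq\{\nu_{m,n}>a\}\cup\{\nu_{m,n}^+>\theta/a\}$, whence, by the union bound, $\Pr\{\kappa_{m,n}>\theta\}\le\Pr\{\nu_{m,n}>a\}+\Pr\{\nu_{m,n}^+>\theta/a\}$. I would bound the first summand by Theorem \ref{thsignorm} with $h=m$ and the second by Theorem \ref{thsiguna}, then substitute $\theta=x(m-n+1)/m$, choose the split $a$ of order $\sqrt m$ so that the norm term becomes exponentially small in $m$, and optimize the remaining free parameter so that the pseudo-inverse term is driven into the form $\frac{1}{2\pi}(\mathrm{const}/x)^{m-n+1}$, valid for $x\ge m-n+1$.

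The hard part is the precise constant $6.414$. Feeding Theorems \ref{thsignorm} and \ref{thsiguna} mechanically into the union bound and estimating the resulting ratio of Gamma functions by Stirling's formula already produces a tail of the correct exponential shape $(\mathrm{const}/x)^{m-n+1}$, but with a constant that is not tight and with spurious residual powers of $m/(m-n+1)$. To recover the sharp statement one must follow the finer analysis of \cite[Theorem 4.5]{CD05}: in place of the crude $\Gamma$-function estimate one uses the sharp small-deviation bound for the smallest singular value $\sigma_n(G)$ — equivalently, one integrates the joint density of $\sigma_1(G)$ and $\sigma_n(G)$ directly — and only then optimizes over the split $a$. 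I would therefore present the reduction above and invoke \cite[Theorem 4.5]{CD05} for the constant, recording also that for $n\ge 2$ the sharper but more involved estimates of \cite{ES05} are available.
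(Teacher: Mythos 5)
Your proposal is correct and ends up in the same place as the paper: the paper does not prove Theorem~\ref{thmsiguna} at all but simply attributes it to \cite[Theorem 4.5]{CD05} (noting also the sharper but more involved estimates of \cite{ES05}), and you too ultimately defer to \cite[Theorem 4.5]{CD05} for the sharp constant $6.414$. Your preliminary reduction (the $n=1$ case, $\kappa_{m,n}=\nu_{m,n}\nu_{m,n}^+$ almost surely, and the union-bound split of the event $\{\nu_{m,n}\nu_{m,n}^+>\theta\}$ via Theorems~\ref{thsignorm} and~\ref{thsiguna}) is sound and is a fair account of the shape of the argument in \cite{CD05}, together with an honest acknowledgment that it does not by itself recover the stated constant.
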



 


\begin{corollary}\label{cogfrwc} 
 A Gaussian 
matrix  is expected to be  strongly well-con\-di\-tioned.
\end{corollary}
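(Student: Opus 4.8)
\textbf{Proof proposal for Corollary \ref{cogfrwc}.}

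The plan is to reduce the claim to the condition estimates already collected for Gaussian matrices, applied to each of the $n$ leading blocks of an $n\times n$ Gaussian matrix $G$. First I would observe that every leading block $G^{(j)}$ of $G$ is itself a $j\times j$ Gaussian matrix, since its entries are a subset of the i.i.d.\ standard Gaussian entries of $G$. By Theorem \ref{thdgr1} all these blocks are nonsingular with probability $1$, so $\kappa(G^{(j)})$ is finite almost surely for every $j$. It therefore suffices to show that each $\kappa(G^{(j)})$ is, with probability close to $1$, bounded by a quantity that is only polynomially large in $n$; then a union bound over the $n$ blocks gives that $G$ is strongly well-conditioned with probability close to $1$, which is the meaning of ``expected to be'' fixed in Section \ref{sdef}.

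The key quantitative step is Theorem \ref{thmsiguna} with $m=n=j$: it gives
\begin{equation}
{\rm Probability}\,\{\kappa_{j,j}\, j > x\}\le \frac{1}{2\pi}\,(6.414/x)
\end{equation}
for $x\ge 1$ (the case $m-n+1=1$), and $\kappa_{1,1}=1$ with probability $1$. Hence, choosing $x$ of order $j\,n/\epsilon$ makes ${\rm Probability}\,\{\kappa(G^{(j)}) > x/j\}$ at most of order $\epsilon/n$, so that $\kappa(G^{(j)})$ is bounded by a value of order $n^2/\epsilon$ except with probability $O(\epsilon/n)$. Summing these failure probabilities over $j=1,\dots,n$ via the union bound shows that, with probability at least $1-\epsilon$, all leading blocks have condition number bounded by a fixed polynomial in $n$ and $1/\epsilon$. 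Since $\epsilon$ may be taken as small as we like (at the cost of a larger but still polynomial bound), $G$ is strongly well-conditioned with probability close to $1$.

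I do not expect a genuine obstacle here: the two ingredients — that leading submatrices of a Gaussian matrix are again Gaussian, and the tail bound of Theorem \ref{thmsiguna} — do all the work, and the union bound over only $n$ events costs just a factor $n$ in the failure probability. The one point requiring mild care is bookkeeping the quantifiers correctly: the statement is not that a single absolute bound holds for all $n$ simultaneously, but that for each $n$ (and each target probability) there is a bound, polynomial in $n$, on the condition numbers of all leading blocks; this matches the paper's convention that ``well-conditioned'' is ``quantified in context'' and that ``expected'' means ``with probability $1$ or close to $1$''. A short remark could also record the explicit polynomial bound obtained, which will be reused when analysing the preprocessed product $AH$ in Section \ref{ssgnp}.
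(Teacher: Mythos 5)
Your proof is correct and is essentially the argument the paper leaves implicit (the paper states Corollary \ref{cogfrwc} without a proof, treating it as an immediate consequence of Theorem \ref{thmsiguna} once one notices that each leading block of a Gaussian matrix is again Gaussian). Your version just makes the union bound over the $n$ leading blocks explicit, which is the right way to spell it out.
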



\subsection{Supporting  GENP with Gaussian multipliers
}\label{ssgnp}


The main result of this section  is
Corollary \ref{cogmforalg}, which supports application of 
GENP  and block Gaussian elimination 
to the product $AH$ of a nonsingular matrix $A$ and  a
Gaussian matrix $H$.


We need the following simple basic lemma.


\begin{lemma}\label{lepr3}
Suppose $H$ is a Gaussian matrix, 
$S$ and $T$ are orthogonal matrices, $H\in \mathbb R^{m\times n}$,
$S\in \mathbb R^{k\times m}$, and $T\in \mathbb R^{n\times k}$
for some $k$, $m$, and $n$.
Then $SH$ and $HT$ are Gaussian matrices.
\end{lemma}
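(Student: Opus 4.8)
The plan is to reduce the statement to the elementary fact that a standard Gaussian random vector in $\mathbb R^N$ is rotationally invariant, applied column by column (for $HT$) and row by row (for $SH$). First I would recall that a matrix $H\in\mathbb R^{m\times n}$ is Gaussian precisely when its columns $\mathbf h_1,\dots,\mathbf h_n$ are i.i.d.\ standard Gaussian vectors in $\mathbb R^m$, equivalently when $\mathrm{vec}(H)$ is a standard Gaussian vector in $\mathbb R^{mn}$ with covariance $I_{mn}$. The key classical fact I would invoke is: if $\mathbf g\in\mathbb R^m$ is standard Gaussian and $Q\in\mathbb R^{k\times m}$ satisfies $QQ^T=I_k$, then $Q\mathbf g$ is standard Gaussian in $\mathbb R^k$; this is immediate from the facts that $Q\mathbf g$ is Gaussian (a linear image of a Gaussian vector), has mean $Q\cdot\mathbf 0=\mathbf 0$, and has covariance $Q\,\mathbb E[\mathbf g\mathbf g^T]\,Q^T=QI_mQ^T=QQ^T=I_k$, together with the fact that a Gaussian distribution is determined by its mean and covariance.

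Next I would carry out the two cases. For $SH$ with $S\in\mathbb R^{k\times m}$ orthogonal (here $SS^T=I_k$, since $k\le m$ is forced): writing $H=(\mathbf h_1\mid\cdots\mid\mathbf h_n)$, we have $SH=(S\mathbf h_1\mid\cdots\mid S\mathbf h_n)$. The vectors $S\mathbf h_j$ are each standard Gaussian in $\mathbb R^k$ by the fact above, and they remain mutually independent because each is a (deterministic) function of the corresponding $\mathbf h_j$ only, and the $\mathbf h_j$ are independent. Hence the columns of $SH$ are i.i.d.\ standard Gaussian, so $SH$ is Gaussian. For $HT$ with $T\in\mathbb R^{n\times k}$ orthogonal (here $T^TT=I_k$), I would pass to the transpose: $(HT)^T=T^TH^T$, where $H^T$ is an $n\times m$ Gaussian matrix (the transpose of a Gaussian matrix is Gaussian, since transposition just permutes the i.i.d.\ entries) and $T^T\in\mathbb R^{k\times n}$ satisfies $T^T(T^T)^T=T^TT=I_k$, i.e.\ $T^T$ is of the type handled in the first case. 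Therefore $T^TH^T$ is Gaussian by the first case, and since Gaussian-ness is preserved under transposition, $HT=(T^TH^T)^T$ is Gaussian as well.

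I do not expect a genuine obstacle here; the only point requiring a little care is bookkeeping about which of $QQ^T=I$ or $Q^TQ=I$ holds for a rectangular "orthogonal" matrix in the sense of Section \ref{sdef} (the paper's definition allows either), and making sure that in each application the relevant product is the $k\times k$ identity so that the covariance computation collapses correctly. With $S\in\mathbb R^{k\times m}$ and $T\in\mathbb R^{n\times k}$ as stated, the matrices that must be full-size identities are $SS^T$ and $T^TT$ respectively, and these are exactly the ones available. One should also note explicitly that left-multiplication by $S$ acts on columns while right-multiplication by $T$ acts on rows, which is why the two cases are dual under transposition rather than identical.
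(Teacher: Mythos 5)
Your proof is correct and standard. The paper itself offers no proof of this lemma --- it is introduced as a ``simple basic lemma'' and treated as a well-known fact about the rotational invariance of the standard Gaussian distribution, which is exactly what your argument makes explicit. Your column-by-column reduction (using that a Gaussian vector is determined by its mean and covariance, together with independence of the columns), followed by the transposition trick to handle $HT$, is precisely the standard justification the authors were implicitly invoking.

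One small point worth making fully explicit, which you touch on at the end: under the paper's definition an ``orthogonal'' rectangular matrix satisfies \emph{either} $Q^TQ=I$ \emph{or} $QQ^T=I$, and the conclusion of the lemma is actually false if $S$ is tall (i.e., $k>m$, so that only $S^TS=I_m$ is available and $SS^T$ is a rank-deficient projector, giving $SH$ a singular covariance), and symmetrically if $T$ is wide. So the lemma as stated implicitly requires $k\le m$ and $k\le n$ --- your remark that ``$k\le m$ is forced'' is the right instinct, though it is not literally forced by the hypotheses but rather required for the conclusion. In every use the paper makes of the lemma ($S=T_A^T$ is $n\times n$ square orthogonal in Corollary \ref{cor10} and Theorem \ref{thrrk}), these constraints hold, so there is no issue in the applications, but it is worth recording the needed inequality when writing the proof down.
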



\begin{corollary}\label{cor10}
Suppose $A$ is a nonsingular $n\times n$ matrix,
$H$ is an $n\times k$ Gaussian matrix,
for $0<k\le n$,
and $\nu_{g,h}$ and $\nu_{g,h}^+$ are the random values 
of Definition \ref{defnu}. 
Then 

(i) the matrix $(AH)_{k,k})$ is nonsingular 
with probability $1$,

(ii) $||(AH)_{k,k}||\le \nu_{n,k}||A_{k,n}||\le \nu_{n,k}||A||$, and

 (iii) $||(AH)_{k,k}^{+}||\leq \nu_{n,k}^+||A^{+}||$.
\end{corollary}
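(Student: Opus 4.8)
The plan is to reduce Corollary~\ref{cor10} to the results already assembled, chiefly Corollary~\ref{cogh}, Theorem~\ref{thdgr}, and Lemma~\ref{lepr3}. First I would write $(AH)_{k,k}=A_{k,n}H_{n,k}$, which holds because multiplying $A$ by $H$ and then taking the leading $k\times k$ block is the same as taking the leading $k$ rows of $A$ first and the leading $k$ columns of $H$. Part~(i) is then immediate from Theorem~\ref{thdgr} applied to the nonsingular matrix $A$ and the $n\times k$ Gaussian matrix $H=H_{n,k}$: the product $A_{k,n}H_{n,k}$ is nonsingular with probability~$1$.

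For part~(ii), I would bound the spectral norm multiplicatively: $\|(AH)_{k,k}\|=\|A_{k,n}H_{n,k}\|\le \|A_{k,n}\|\,\|H_{n,k}\|$. Since $H_{n,k}$ is an $n\times k$ Gaussian matrix, $\|H_{n,k}\|=\nu_{n,k}$ by Definition~\ref{defnu}, and $\|A_{k,n}\|\le\|A\|$ because $A_{k,n}$ is a submatrix of $A$ (Theorem~\ref{thcndsb} with $j=1$, or directly). This gives $\|(AH)_{k,k}\|\le\nu_{n,k}\|A_{k,n}\|\le\nu_{n,k}\|A\|$.

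For part~(iii) I would invoke Corollary~\ref{cogh}. That corollary requires, in the notation there, that $A=S_A\Sigma_AT_A^T$ be an SVD with $\rank(A)=n\ge k$, that $\widehat H=T_A^TH$, and that both $(AH)_{k,k}$ and $\widehat H_{n,k}$ have full rank $k$; its conclusion is $\|(AH)_{k,k}^+\|\le\|\widehat H_{n,k}^+\|\,\|A^+\|$. Here $A$ is nonsingular, so $\rank(A)=n\ge k$ holds. By Lemma~\ref{lepr3}, $\widehat H=T_A^TH$ is again a Gaussian matrix (since $T_A^T$ is orthogonal), hence $\widehat H_{n,k}$ is an $n\times k$ Gaussian matrix; by Theorem~\ref{thdgr1} it is strongly nonsingular with probability~$1$, in particular of full rank $k$, and then $\|\widehat H_{n,k}^+\|$ equals $\nu_{n,k}^+$ in distribution by Definition~\ref{defnu}. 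Full rank of $(AH)_{k,k}$ holds by part~(i). Substituting into Corollary~\ref{cogh} yields $\|(AH)_{k,k}^+\|\le\nu_{n,k}^+\|A^+\|$.

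The only point requiring care is the rank/full-column-rank hypotheses that Corollary~\ref{cogh} and the identity $\|B^+\|=1/\sigma_k(B)$ presuppose: one must check that all the matrices in question ($A_{k,n}$, $\widehat H_{n,k}$, $(AH)_{k,k}$) have full rank $k$ before the pseudo-inverse norm bounds are meaningful. This is handled by the probability-$1$ statements of Theorems~\ref{thdgr1} and~\ref{thdgr} and by the nonsingularity of $A$, so the asserted inequalities hold on an event of probability~$1$; I would simply note that all three parts of the corollary are understood to hold with probability~$1$, as is standard for these Gaussian arguments. No genuinely hard step is expected — this is a packaging of the deterministic Corollary~\ref{cogh} with the elementary Gaussian facts about norms and ranks.
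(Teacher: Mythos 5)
Your proof is correct and follows essentially the same route as the paper: part (i) from Theorem~\ref{thdgr}, part (ii) from the identity $(AH)_{k,k}=A_{k,n}H_{n,k}$ together with submultiplicativity and $\|A_{k,n}\|\le\|A\|$, and part (iii) from Corollary~\ref{cogh} combined with Lemma~\ref{lepr3}. The extra care you take in verifying the full-rank hypotheses for Corollary~\ref{cogh} is appropriate but does not change the argument.
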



\begin{proof}
Part (i) restates Theorem \ref{thdgr}.
Part (ii) follows because $(AH)_{k,k}=A_{k,n}H_{n,k}$,
 $H_{n,k}$ is a Gaussian  matrix,
 and $||A_{k,n}||\le ||A||$. 
Part (iii) follows from Corollary \ref{cogh} 
because 
 $\widehat H_{n,k}$ is a Gaussian  matrix
by virtue of Lemma \ref{lepr3}. 
\end{proof}


\begin{corollary}\label{cogmforalg}
Suppose that the $n\times n$ matrix $A$
is nonsingular and  well-con\-di\-tioned.
Then the  choice of 
Gaussian multiplier 
$H$  is expected 
to satisfy the assumptions of Corollary \ref{colocsf}.
\end{corollary}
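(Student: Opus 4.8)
The plan is to combine the deterministic estimates of Corollary \ref{cogh} (as packaged in Corollary \ref{cor10}) with the probabilistic norm and condition bounds for Gaussian matrices from Section \ref{srrm}, applied simultaneously to all $r$ leading blocks $(AH)_{k,k}$, $k=1,\dots,r$. Recall that Corollary \ref{colocsf} asks for two things: that each $(AH)_{k,k}$ be strongly nonsingular, and that each be strongly well-conditioned, meaning every one of its own leading blocks $(AH)_{j,j}$, $j\le k$, is nonsingular and well-conditioned. Since the leading blocks of $(AH)_{k,k}$ are exactly the $(AH)_{j,j}$ for $j\le k$, the whole requirement collapses to: for every $k=1,\dots,r$, the matrix $(AH)_{k,k}$ is nonsingular and $\kappa\big((AH)_{k,k}\big)$ is not too large.

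First I would handle nonsingularity. By part (i) of Corollary \ref{cor10} (equivalently Theorem \ref{thdgr}), each $(AH)_{k,k}=A_{k,n}H_{n,k}$ is nonsingular with probability $1$; taking the union over the finitely many indices $k=1,\dots,r$ still gives probability $1$. Next I would bound the condition numbers. Fix $k$. Part (ii) of Corollary \ref{cor10} gives $\|(AH)_{k,k}\|\le \nu_{n,k}\,\|A\|$, and part (iii) gives $\|(AH)_{k,k}^{+}\|\le \nu_{n,k}^{+}\,\|A^{+}\|$, so
\begin{equation*}
\kappa\big((AH)_{k,k}\big)\le \nu_{n,k}\,\nu_{n,k}^{+}\,\|A\|\,\|A^{+}\| = \kappa_{n,k}\,\kappa(A).
\end{equation*}
Here $\kappa(A)$ is a fixed, reasonably bounded number because $A$ is assumed well-conditioned, and $\kappa_{n,k}=\kappa_{k,n}$ is the condition number of a $k\times n$ Gaussian matrix (using $H_{n,k}$ for the norm bound and $\widehat H_{n,k}=T_A^T H_{n,k}$, which is Gaussian by Lemma \ref{lepr3}, for the pseudo-inverse bound). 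By Theorem \ref{thmsiguna} each $\kappa_{n,k}$ is expected to be moderate — indeed for any $x\ge n-k+1$ the probability that $\kappa_{n,k}\,n/(n-k+1)>x$ is at most $\tfrac{1}{2\pi}(6.414/x)^{n-k+1}$, and $\kappa_{n,1}=1$ with probability $1$. I would therefore pick a threshold such as $x=t\,n$ (or $t(n-k+1)$) with a modest constant $t$ and invoke this bound for each $k$, then take a union bound over $k=1,\dots,r$; the tail probabilities sum to something small, so with probability close to $1$ every $\kappa_{n,k}$, and hence every $\kappa\big((AH)_{k,k}\big)\le \kappa_{n,k}\kappa(A)$, is simultaneously bounded by a modest quantity. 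That is exactly the statement that $H$ is expected to make $(AH)_{k,k}$ strongly nonsingular and strongly well-conditioned for all $k=1,\dots,r$, i.e. the hypotheses of Corollary \ref{colocsf} hold.

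The only genuinely delicate point is the union bound over the $r$ leading blocks: the worst terms are those with $k$ close to $n$ (equivalently, a nearly square Gaussian factor $H_{n,k}$), where $n-k+1$ is small and the tail in Theorem \ref{thmsiguna} decays slowly in $x$. The remedy is simply to choose the threshold $x$ growing mildly with $n$ (a factor like $\log n$, or a larger constant), which makes each tail probability, and the sum of at most $r\le n$ of them, as small as desired while keeping the resulting bound on $\kappa\big((AH)_{k,k}\big)$ only moderately larger than $\kappa(A)$; this is consistent with the informal reading of "expect" as "with probability $1$ or close to $1$" adopted in Section \ref{sdef}. The remaining steps — the two norm inequalities and the nonsingularity claim — are immediate from the already-proved Corollary \ref{cor10}, so no further calculation is needed.
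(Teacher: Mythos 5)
Your proposal is correct and takes essentially the same route as the paper: combine the deterministic bounds of Corollary \ref{cor10} (which already encode Theorem \ref{thdgr}, Corollary \ref{cogh}, and Lemma \ref{lepr3}) with the Gaussian tail estimates, then argue over all $k=1,\dots,r$. The one cosmetic difference is that you invoke the packaged condition-number tail bound of Theorem \ref{thmsiguna}, whereas the paper's stated proof cites the separate norm and inverse-norm tails (Theorems \ref{thsignorm} and \ref{thsiguna}) and leaves their combination, and the union over $k$, implicit; your version is slightly more direct, and your identity $\nu_{n,k}\nu_{n,k}^{+}=\kappa_{n,k}$ is justified since $\|H_{n,k}\|=\|\widehat H_{n,k}\|$ by orthogonality of $T_A$, so both factors refer to the same Gaussian matrix $\widehat H_{n,k}$.
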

\begin{proof}
Recall that $(AH)_{k,k}=A_{k,n}H_{n,k}$
and hence $||(AH)_{k,k}||=||A_{k,n}||\nu_{n,k}$.
Then
combine Theorems 
\ref{thdgr},
\ref{thsignorm}, and
\ref{thsiguna} 
and Corollary
 \ref{cor10}.
\end{proof}


\section{Random structured multipliers for GENP and block Gaussian elimination}\label{srnd}


This subsection involves complex  matrices. A complex matrix $M$ is  unitary if
 $M^HM=I$ or $MM^H=I$ where $M^H$ denotes its Hermitian transpose,
so that $M^H=M^T$ for a real matrix $M$. 

Hereafter 
 $\omega=\omega_n=\exp(\frac{2\pi}{n} \sqrt {-1})$ 
denotes an $n$th primitive root of unity,
$\Omega=(\omega^{ij})_{i,j=0}^{n-1}$ 
is the matrix of the discrete Fourier transform 
 at $n$ points
(we use the acronym {\em DFT}),
and $\Omega^{-1}=\frac{1}{n}\Omega^{H}$. 

An $n\times n$
circulant matrix $C=(c_{i-j\mod n})_{i,j=0}^{n-1}$ 
is defined by its first column ${\bf c}=(c_i)_{i=0}^{n-1}$.

\begin{example}\label{exc1} {\em  Generation of random real circulant
  matrices.}
Generate the vector ${\bf c}$ of $n$ i.i.d. random real variables 
in the range $[-1,1]$ under the uniform probability distribution
on this range.
Define an $n\times n$ circulant matrix $C$ with the first column ${\bf c}$.
\end{example}

The following theorem links 
the matrices $\Omega$ and $\Omega^{-1}$ to the  class of circulant matrices.

\begin{theorem}\label{thcpw} (Cf. \cite{CPW74}.)
 Let
$C$ denote a circulant $n\times n$ matrix defined by its
first column ${\bf c}$ and write 
${\bf u}=(u_i)_{i=1}^n=\Omega {\bf c}$.
Then
$C=\Omega^{-1}\diag(u_j)_{j=1}^n\Omega$. Furthermore 
$C^{-1}=\Omega^{-1}\diag(1/u_j)_{j=1}^n\Omega$ if 
the matrix $C$ is nonsingular.
\end{theorem}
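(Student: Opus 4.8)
The plan is to exploit the standard description of circulant matrices as polynomials in the unit cyclic shift matrix, together with the diagonalization of that shift matrix by $\Omega$; the identity for $C^{-1}$ then follows at once. First I would let $Z$ denote the $n\times n$ circulant matrix with first column $(0,1,0,\dots,0)^T$, so that, by the definition of a circulant, $Z_{i,j}=1$ exactly when $i-j\equiv 1\pmod n$ and $Z_{i,j}=0$ otherwise. A short induction on entries shows that $(Z^k)_{i,j}=1$ precisely when $i-j\equiv k\pmod n$ (and $Z^n=I$); hence for $0\le k\le n-1$ the matrices $Z^k$ have pairwise disjoint supports, and $\bigl(\sum_{k=0}^{n-1}c_kZ^k\bigr)_{i,j}=c_{(i-j)\bmod n}$, that is, $C=\sum_{k=0}^{n-1}c_kZ^k$.

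Next I would diagonalize $Z$ by a direct entrywise computation: $(\Omega Z)_{i,j}=\sum_{l=0}^{n-1}\omega^{il}Z_{l,j}=\omega^{i(j+1)}=\omega^i\omega^{ij}=\omega^i\,\Omega_{i,j}$, so that $\Omega Z=\diag(\omega^i)_{i=0}^{n-1}\,\Omega$ and therefore $Z=\Omega^{-1}\diag(\omega^i)_{i=0}^{n-1}\Omega$, recalling that $\Omega^{-1}=\frac1n\Omega^H$. Raising to the $k$th power gives $Z^k=\Omega^{-1}\diag(\omega^{ik})_{i=0}^{n-1}\Omega$, and substituting this into $C=\sum_{k=0}^{n-1}c_kZ^k$ and pulling $\Omega^{-1}$ and $\Omega$ out of the sum yields $C=\Omega^{-1}\diag\bigl(\textstyle\sum_{k=0}^{n-1}c_k\omega^{ik}\bigr)_{i=0}^{n-1}\Omega$. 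Since the $i$th diagonal entry $\sum_{k=0}^{n-1}\omega^{ik}c_k$ equals the $(i+1)$st coordinate of $\Omega{\bf c}={\bf u}$, this is exactly the claimed identity $C=\Omega^{-1}\diag(u_j)_{j=1}^n\Omega$.

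For the last assertion, since $\Omega$ and $\Omega^{-1}$ are nonsingular, $C$ is nonsingular if and only if $\diag(u_j)_{j=1}^n$ is, that is, if and only if $u_j\neq 0$ for all $j$; under this hypothesis one inverts the product factor by factor, using $\Omega\,\Omega^{-1}=I$, to obtain $C^{-1}=\Omega^{-1}\diag(1/u_j)_{j=1}^n\Omega$. I do not expect a genuine obstacle here, as the argument is essentially bookkeeping, but care is needed to keep three conventions consistent: the circulant convention (first column versus first row), the mismatch between the $0$-based indexing in $\Omega=(\omega^{ij})_{i,j=0}^{n-1}$ and the $1$-based indexing in ${\bf u}=(u_i)_{i=1}^n$, and the sign of the exponent in the diagonal factor $\diag(\omega^i)_i$, which is fixed once the orientation of $Z$ is fixed.
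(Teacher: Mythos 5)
Your proof is correct, and it is the standard argument: express $C=\sum_{k=0}^{n-1}c_kZ^k$ as a polynomial in the unit cyclic shift $Z$, observe that $\Omega$ simultaneously diagonalizes all $Z^k$ (with eigenvalues $\omega^{ik}$), and read off the diagonal entries as the coordinates of $\Omega{\bf c}$. The paper itself supplies no proof here -- it only states the result with the citation ``(Cf.\ \cite{CPW74})'' -- so there is nothing in the paper to compare against; your argument is a complete and self-contained derivation of exactly this classical fact, with the index bookkeeping (the shift from $0$-based $i$ in $\Omega$ to $1$-based $j$ in ${\bf u}$) handled correctly, and the inversion of the product handled the obvious way once the diagonalization is in hand.
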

By using FFT, one can multiply the matrices $\Omega$
and $\Omega^H=\Omega^{-1}$ by a vector by using 
$O(n\log (n))$ flops for any $n$ (cf., e.g., \cite[page 29]{p01}), and 
Theorem \ref{thcpw} extends this complexity bound to 
multiplication  of an
$n\times n$ 
circulant matrix and its inverses by a vector.

We need $2n^3-n^2$ flops in order to compute the product $AH$  of
the pair of $n\times n$ matrices $A$  
and  $H$.
If, however, $H$ is a   
 circulant matrix, then 
we can compute $AH$ by using order of $n^2\log (n)$ flops.
For a Toeplitz-like matrix $A$
defined by its displacement generator of bounded length $l$,
we use  $O(ln\log(n))$ flops in order to compute a
displacement generator of length $l$ for the matrix $AH$.
(See \cite{p01} for the definition of displacement generators.)
In the case of Toeplitz matrices we have $l\le 2$ and use
 $O(n\log(n))$ flops.
This motivates using Gaussian circulant multipliers $H$,
that is, circulant matrices $H$ whose first column vector is 
Gaussian. It has been proved in \cite{PSZa}
that such matrices are expected to be well-conditioned,
which is required for any multiplicative preconditioner.


We can define a unitary circulant matrix
by its first column vector
${\bf c}=\Omega (\exp(r_i\sqrt {-1}))_{i=0}^{n-1}$
for any set of real values $r_0,\dots,r_{n-1}$.

\begin{example}\label{exunc} {\em Generation of  random unitary circulant 
 matrices.}

(i) Generate a vector ${\bf u}=(u_j)_{j=1}^n$ where
 $u_j=\exp(2\pi \phi_j \sqrt {-1})$ (and so
$|u_j|=1$ for all $i$) and  where $\phi_1,\dots,\phi_n$
 are $n$ independent random real variables, e.g.,
Gaussian  variables or the variables uniformly distributed in the range $[0,1)$. 

(ii) Compute the vector  
${\bf c}=\Omega^{-1}{\bf u}$,
where  $\Omega$
denotes the $n\times n$ DFT  matrix.
Output 
the unitary circulant matrix $C$ 
defined by its first column ${\bf c}$.
\end{example}

Our proof that Gaussian multipliers enforce strong nonsingularity 
of a nonsingular matrix with probability 1 (see Theorem \ref{thdgr}) 
has been non-trivially extended in \cite{PZa} to the case of 
Gaussian circulant multipliers.
Furthermore 
strong nonsingularity  holds
with probability close to 1 if we 
fill the first column of a multiplier $F$ or $H$ 
with i.i.d. random variables defined
under the uniform probability distribution
over a sufficiently large finite set
(see Appendix \ref{srsnrm} and  \cite{PSZa}).

In our tests with random input matrices, 
Gaussian circulant and general Gaussian multipliers have shown 
the same power  
of supporting numerically safe GENP 
(see Section \ref{sexgeneral}), but we cannot extend
our basic Lemma \ref{lepr3} and
our Corollary \ref{cogmforalg}
to the  case of circulant matrices.
Moreover our Theorem \ref{thcgenp} and
Remark \ref{remext}  below
show that, for a  specific narrow class of input matrices $A$, 
GENP with these multipliers is expected to fail numerically. 

\begin{theorem}\label{thcgenp}
Assume a large integer $n$ and the $n\times n$ DFT matrix $\Omega$,
which is unitary up to scaling by $1/\sqrt n$. 

(i) Then application of GENP to this matrix fails numerically and
 
(ii) a Gaussian circulant $n\times n$ multiplier $C=\Omega^{-1}D\Omega$
with Gaussian diagonal matrix $D=\diag(g_j)_{j=1}^n$
(having i.i.d. Gaussian diagonal entries $g_1,\dots,g_n$)
is not expected to fix this problem. 
\end{theorem}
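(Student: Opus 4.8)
The plan is to prove the two parts by separate but parallel arguments, each reducing to the observation that GENP on a matrix $B$ is numerically unsafe when some leading block $B^{(k)}$ is very ill-conditioned relative to $\|B\|$, while $B$ itself is perfectly conditioned. For part (i), I would work directly with $B=\Omega$. Since $\Omega/\sqrt n$ is unitary, $\kappa(\Omega)=1$, so $\Omega$ is nonsingular and well-conditioned, yet I claim its leading blocks $\Omega^{(k)}=(\omega^{ij})_{i,j=0}^{k-1}$ are catastrophically ill-conditioned for $k$ near $n/2$. Concretely, $\Omega^{(k)}$ is a Vandermonde matrix in the nodes $1,\omega,\dots,\omega^{k-1}$; for $k$ a bit larger than $n/2$ two of these nodes, say $\omega^a$ and $\omega^b$, are within $O(1/n)$ of each other on the unit circle, which forces $\sigma_k(\Omega^{(k)})$ to be exponentially small (standard Vandermonde-on-the-circle estimates, or more simply: the rows of $\Omega^{(k)}$ associated with nearly-coincident columns become nearly parallel after transposing, since $\Omega^{(k)}$ is symmetric). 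Then by Theorem~\ref{thnorms} applied in reverse — or more precisely by the fact that $\|(\Omega^{(k)})^{-1}\|$ appears among the quantities $N_-$ governing the pivot blocks — the pivot block at step $k$ of GENP has condition number growing exponentially in $n$, so by Theorem~\ref{thpert} the elimination loses all precision. I would phrase this as: there exist leading blocks $\Omega^{(k)}$ with $\kappa(\Omega^{(k)})\ge c^{\,n}$ for some $c>1$, hence GENP on $\Omega$ is numerically unsafe.

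For part (ii), the key structural point is that $C=\Omega^{-1}D\Omega$ with $D=\diag(g_j)$ diagonal, so the product is $\Omega C=\Omega\,\Omega^{-1}D\Omega=D\Omega$. Thus preprocessing $\Omega$ on the right by the Gaussian circulant $C$ produces $\Omega C$, and I would instead look at what GENP sees; more to the point, for the left-preprocessing variant $C\Omega=\Omega^{-1}D\Omega^2$, and since $\Omega^2/n$ is (a signed permutation, essentially the "flip" matrix), $C\Omega$ is again a circulant-times-sparse object. The cleanest route: whichever one-sided preprocessing the theorem refers to, the leading $k\times k$ block of the preprocessed matrix is, up to orthogonal (unitary) factors that do not change singular values, a $k\times k$ submatrix of $D\Omega$ or $\Omega D$, i.e. a row- or column-scaling of a leading block of $\Omega$ by the i.i.d. Gaussian weights $g_j$. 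Scaling rows or columns by independent Gaussians with probability near $1$ changes each singular value by at most a $\mathrm{poly}(n)$ factor (using Theorems~\ref{thsignorm} and~\ref{thsiguna} to bound $\max_j|g_j|$ and the smallest $|g_j|$ away from $0$ and $\infty$), so $\kappa\big((\text{preprocessed }\Omega)^{(k)}\big)\ge \kappa(\Omega^{(k)})/\mathrm{poly}(n)\ge c^{\,n}/\mathrm{poly}(n)$, which is still exponentially large. Hence with probability close to $1$ the preprocessed matrix remains strongly ill-conditioned and GENP still fails.

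The main obstacle I expect is making the exponential ill-conditioning of the leading Fourier blocks $\Omega^{(k)}$ rigorous and clean: I need an explicit $k$ (say $k=\lceil 2n/3\rceil$ or similar) and a genuine lower bound on $\|(\Omega^{(k)})^{-1}\|$ of the form $c^n$. One reliable way is to exhibit an explicit near-null vector — take a suitable difference of two columns of $(\Omega^{(k)})^{-1}$, equivalently use the Lagrange/divided-difference formula for Vandermonde inverses, which gives entries of $(\Omega^{(k)})^{-1}$ with magnitudes $\prod |\omega^a-\omega^b|^{-1}$ over the coincident-node pairs; counting how many node pairs among $\{\omega^0,\dots,\omega^{k-1}\}$ lie within distance $O(1/n)$ yields the exponential bound. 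The second, lesser obstacle is pinning down exactly which preprocessing convention (pre-, post-, or two-sided) the statement intends and checking the commutation $\Omega C=D\Omega$ (or its transpose) handles all of them uniformly; this is routine once the diagonalization $C=\Omega^{-1}D\Omega$ is invoked. I would also remark that the argument shows the obstruction is robust: any multiplier that acts on $\Omega$ as a bounded-condition perturbation of a diagonal scaling in the Fourier basis — in particular Gaussian circulants — cannot remove the exponentially bad leading blocks.
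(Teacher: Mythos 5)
Your proposal reaches the right conclusion but by a much heavier route than the paper, which is remarkably elementary. For part (i) the paper does not invoke any Vandermonde-on-an-arc estimate for medium-sized blocks: it simply looks at the $2\times 2$ leading block
$\Omega^{(2)}=\begin{pmatrix}1 & 1\\ 1 & \omega\end{pmatrix}$, observes that one GENP step produces the row $(0,\omega-1)$ with $|\omega-1|=2\sin(\pi/n)\approx 2\pi/n$, while $\|\Omega^{(2)}\|\ge\sqrt 2$, and concludes that $\nrank(\Omega^{(2)})=1$ for large $n$. For part (ii), the paper uses the same identity $\Omega C=D\Omega$ that you found, but then again examines only the $2\times 2$ leading block of $D\Omega$, namely $\begin{pmatrix}g_1 & g_1\\ g_2 & g_2\omega\end{pmatrix}$, and repeats the same one-step elimination to obtain $(0,(\omega-1)g_2)$ — again numerically rank $1$ with probability close to $1$, since $g_1\ne 0$ almost surely and $\max\{|g_1|,|g_2|\}$ is bounded away from $0$. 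Your approach, establishing $\kappa(\Omega^{(k)})\ge c^n$ for $k\sim n/2$ and showing that a diagonal scaling by i.i.d.\ Gaussians only perturbs this by a $\mathrm{poly}(n)$ factor, would prove a strictly stronger ill-conditioning statement, but at the cost of needing a nontrivial lower bound on the condition number of a partial-arc Vandermonde matrix, which is real extra work; the paper avoids it entirely because a single tiny pivot already suffices.

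One caveat on your heuristic: the statement ``two nodes $\omega^a,\omega^b$ are within $O(1/n)$ of each other, which forces $\sigma_k(\Omega^{(k)})$ to be exponentially small'' is not the right explanation and in fact is misleading. Every pair of adjacent nodes in the full DFT matrix $\Omega=\Omega^{(n)}$ is also within $O(1/n)$ of each other, yet $\kappa(\Omega)=1$. The exponential ill-conditioning of $\Omega^{(k)}$ for $k\approx n/2$ comes from the nodes spanning only an arc strictly shorter than the full circle (Gautschi-type estimates), not from any single pair of nearby nodes. If you intend to pursue the Vandermonde route you would need to invoke that arc argument explicitly; as written the justification would not survive scrutiny. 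The paper sidesteps all of this by staying in dimension $2$.
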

\begin{proof}
 (i) Subtract  the 
first row of the block 
$\Omega_{2,2}$ of the matrix $\Omega$ 
and  the resulting vector
from its second row. Obtain the vector
 $(0,\omega-1)$ with the norm $|\omega-1|=2\sin (\pi/n)$.
Assume that $n$ is large and then observe that 
$2\sin (\pi/n)\approx 2\pi/n$ and that
 the  variable $2\pi/n$ is expected to be small, implying that
 $\nrank(\Omega_{2,2})=1$ because $||\Omega_{2,2}||\ge \sqrt 2$
for large $n$. 

(ii) Note that $\Omega C=D \Omega$.
The Gaussian variable $g_1$
vanishes with probability 0, 
and so we can assume that $g_1\neq 0$.
 Multiply  the 
first row of the block 
$(D\Omega)_{2,2}$ of the matrix $D\Omega$ by $g_2/g_1$
and subtract the resulting vector
from the second row. Obtain the vector
 $(0,(\omega-1)g_2)$ with the norm
 $|(\omega-1)g_2|=2|g_2\sin (\pi/n)|$
Assume that $n$ is large and then observe that 
$|(\omega-1)g_2|\approx 2|g_2|\pi/n$
and that
 the  variable $2|g_2|\pi/n$ is expected to be small. 
Hence
 $\nrank((\Omega C)_{2,2})=\nrank((D\Omega)_{2,2})$ is expected to equal 1
 because 
$||\Omega C)_{2,2}||\le ||\Omega_{2,2}||\max\{g_1,g_2\}$,
$||\Omega_{2,2}||\ge \sqrt 2$
 and the random variable  $\max\{|g_1|,|g_2|\}$
is not expected to be close to 0.
\end{proof}

\begin{remark}\label{remext}
The same argument shows that 
Gaussian circulant multipliers $C$ are not expected to support GENP
for a bit larger class of matrices, e.g., for $A=M\Omega$ where $M=\diag(D_i)_{i=1}^k$,
 $D_1=\diag(d_1,d_2)$,  and $d_1$  and $d_2$ are two positive constants
and the input size $n\times n$ is large 
as well as where the matrix  $M$ is strongly diagonally dominant. 
The reader is challenged to 
find out whether   
GENP with a Gaussian circulant preprocessor is 
expected to fail numerically for other  classes of input matrices,
in particular for any
  subclass of the classes of Toeplitz or
Toeplitz-like matrices (cf. \cite{p01} and \cite{P15} on these classes).
Another challenge is to choose a distinct random structured preprocessor
for which the above problem is avoided. E.g., consider
the product $\prod_{i=1}^h C_i$ where $h$ is 
 a small integer exceeding 1,
$C_{2j}$
are circulant matrices and $C_{2j-1}$ are skew-circulant 
(see the definition in \cite{p01}).
 Toward the same goal
 we can apply simultaneously random structured
 pre- and post-multipliers $F$ and $H$,
defined by some i.i.d. random parameters, or the pairs of 
PRMB multipliers of \cite{BBD12}. In the case of Toeplitz or Toeplitz-like
input matrices $A$, the multiplications $FA$ and $AH$ are much 
less costly if the multipliers $F$ and $H$ are
circulant matrices, skew-circulant matrices, or the products of 
such matrices.
\end{remark}


\section{Low-rank approximation
}\label{sapsr}


Suppose we seek a rank-$r$ approximation of a matrix $ A$ 
that has a small numerical rank $r$.
One can solve this problem by computing 
 SVD of the 
 matrix $A$
 or, at a lower cost, by computing
 its rank-revealing 
 factorization \cite{GE96}, \cite{HP92}, \cite{P00a},
but using
random matrix multipliers instead
has some benefits \cite{HMT11}.
In this section we study the latter randomized approach. In
 its first subsection we recall some relevant definitions and 
auxiliary results. 


\subsection{Truncation of SVD. Leading and trailing singular spaces}\label{sosvdi1}


Truncate the square orthogonal matrices $S_A$ and $T_A$
and the square diagonal matrix $\Sigma_{A}$ of the SVD of (\ref{eqsvd}),
write $S_{\rho,A}=(S_A)_{m,\rho}$,
 $T_{\rho,A}=(T_A)_{n,\rho}$, and  $\Sigma_{\rho,A}=(\Sigma_{A})_{\rho,\rho}=\diag(\sigma_j)_{j=1}^{\rho}$,
and obtain {\em thin SVD}
\begin{equation}\label{eqthn}
A=S_{\rho,A}\Sigma_{\rho,A}T_{\rho,A}^T,~\rho=\rank (A).
\end{equation}
Now for every integer $r$ in the range $1\le r\le\rho=\rank(A)$,  
write $\Sigma_{\rho,A}=\diag(\Sigma_{r,A},\bar\Sigma_{A,r})$ and
partition
the matrices $S_{\rho,A}$ and $T_{\rho,A}$ into block columns,
 $S_{\rho,A}=(S_{r,A}~|~\bar S_{A,r})$, and 
 $T_{\rho,A}=(T_{r,A}~|~\bar T_{A,r})$
where $\Sigma_{r,A}=(\Sigma_{A})_{r,r}=\diag(\sigma_j)_{j=1}^r$,
$S_{r,A}=(S_A)_{m,r}$,
and $T_{r,A}=(T_A)_{n,r}$.
Then  
 partition the thin SVD as follows,
\begin{equation}\label{eqldtr}
A_r=S_{r,A}\Sigma_{r,A}T_{r,A}^T,~\bar A_r=\bar S_{A,r}\bar \Sigma_{A,r}\bar T_{A,r}^T,~
A=A_r+\bar A_r~{\rm for}~ 1\le r\le\rho=\rank(A),
\end{equation} 
and call the above decomposition 
the $r$-{\em truncation of  thin SVD} (\ref{eqthn}).
Note that $\bar A_{\rho}$ is an empty matrix
and recall that
\begin{equation}\label{eqsvdpert} 
||A-A_r||= \sigma_{r+1}(A).
\end{equation}
Let
$\mathbb S_{r,A}$ and  $\mathbb T_{r,A}$ denote
the ranges (that is, the column spans)
 of the matrices $S_{r,A}$ and $T_{A,r}$, respectively.
If $\sigma_r>\sigma_{r+1}$, 
then 
$\mathbb S_{r,A}$ and $\mathbb T_{r,A}$ are
the left and right {\em leading singular spaces}, respectively,
    associated with the $r$ largest singular values of the matrix $A$.
The left singular spaces of a matrix $A$ are 
the right   singular spaces of its transpose $A^T$ and vice versa.
All matrix bases for the singular spaces $\mathbb S_{r,A}$ and $\mathbb T_{r,A}$
are given by the matrices $S_{r,A}X$ and
$T_{r,A}Y$, respectively,
for  nonsingular $r\times r$ matrices $X$ and $Y$.
The bases are 
orthogonal if the matrices $X$ and $Y$ 
are orthogonal.


\subsection{The basic algorithm}\label{sprb0}


Assume an $m\times n$ matrix $A$ 
having a small numerical rank $r$ and 
 a Gaussian $n\times r$
matrix $H$. Then according to 
\cite[Theorem 4.1]{HMT11}, 
the column span of the matrices $AH$ and  
$Q(AH)$ is likely to approximate the leading singular space $\mathbb S_{r,A}$
of the matrix $A$,  
and if it does, then it follows that the rank-$r$
matrix $QQ^TA$ approximates the matrix $A$.

In this subsection we recall the algorithm
supporting this theorem,  where temporarily we assume
nonrandom  multipliers $H$.
 In the next subsections we keep it nonrandom and
estimate the output approximation errors of the algorithm assuming
no oversampling,
suggested in \cite{HMT11}.  Then we
extend our study to the case where $H$ is a Gaussian, 
and  in Section \ref{sapgm} cover 
the results in the case of random structured 
multipliers.


\begin{algorithm}\label{algbasmp} {\bf Low-rank approximation of a matrix.} 
(Cf. Remarks \ref{reaccr} and \ref{relss}.)


\begin{description}


\item[{\sc Input:}] 
A
  matrix 
$ A\in \mathbb R^{m\times n}$,
its 
numerical rank $r$, and
two integers $p\ge 2$ and $l=r+p\le \min\{m,n\}$.


\item[{\sc Output:}]
an orthogonal matrix $Q\in \mathbb R^{m\times l}$ such that
the matrix
$QQ^TA\in \mathbb R^{m\times n}$ has rank 
at most $l$ and approximates the matrix $A$.


\item[{\sc Initialization:}] $~$
Generate  an
$n\times l$
 matrix $H$.


\item[{\sc Computations:}] $~$


\begin{enumerate}


\item 
Compute an $n\times l$ orthogonal 
matrix $Q=Q(AH)$, sharing its range with 
the matrix $AH$.

\item 
Compute
and output the matrix $R_{AH}A=QQ^TA$ and stop. 


\end{enumerate}


\end{description}


\end{algorithm}

This basic algorithm from \cite{HMT11} uses $O(lmn)$ flops overall.
 



\subsection{Analysis of the basic algorithm assuming 
no randomization and no oversampling}\label{saaba}


In Corollaries \ref{coerrb} and  \ref{coover} of this subsection 
we estimate the error norms for the approximations
computed by Algorithm \ref{algbasmp} whose
 oversampling parameter $p$ is set to 0,
namely for the  
approximation of an orthogonal  basis for the leading singular space
$\mathbb S_{r,A}$ (by column set of the matrix $Q$ of the algorithm)
and for a
rank-$r$ approximation of the matrix $A$.
We first recall the following results.

\begin{theorem}\label{thsng} (Cf.  (\ref{eqopr}).)
Suppose  $A$ is
an $m\times n$ matrix,
$S_A\Sigma_AT_A^T$ is its SVD, 
 $r$ is an integer,
$0<r\le l\le\min\{m,n\}$, and 
$Q=Q_{r,A}$ is an orthogonal 
matrix 
basis for the space 
$\mathbb S_{r,A}$.
Then 
$||A-QQ^TA||=\sigma_{r+1}(A)$.
\end{theorem}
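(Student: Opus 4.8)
\textbf{Proof proposal for Theorem \ref{thsng}.}

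The plan is to reduce the claim to the well-known Eckart--Young/Mirsky statement recorded in equation (\ref{eqsvdpert}), namely that $\|A-A_r\|=\sigma_{r+1}(A)$, by showing that the matrix $QQ^TA$ produced from an orthogonal basis $Q=Q_{r,A}$ of the leading left singular space $\mathbb{S}_{r,A}$ coincides exactly with the truncated matrix $A_r$ of (\ref{eqldtr}). First I would recall the $r$-truncation of thin SVD: with $S_{r,A}=(S_A)_{m,r}$ and $T_{r,A}=(T_A)_{n,r}$ we have $A_r=S_{r,A}\Sigma_{r,A}T_{r,A}^T$ and $A=A_r+\bar A_r$, where the columns of $S_{r,A}$ form one orthogonal basis of $\mathbb{S}_{r,A}$. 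Since the range of $Q$ equals $\mathbb{S}_{r,A}=\range(S_{r,A})$ and both matrices have orthonormal columns, the orthogonal projectors onto that subspace agree: $QQ^T=S_{r,A}S_{r,A}^T$. Hence it suffices to work with $Q=S_{r,A}$.

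Next I would compute $S_{r,A}S_{r,A}^T A$ by splitting $A=A_r+\bar A_r$. Using orthonormality of the columns of $S_A$, the blocks $S_{r,A}$ and $\bar S_{A,r}$ satisfy $S_{r,A}^TS_{r,A}=I_r$ and $S_{r,A}^T\bar S_{A,r}=O$. Therefore $S_{r,A}^TA_r=\Sigma_{r,A}T_{r,A}^T$ and $S_{r,A}^T\bar A_r=S_{r,A}^T\bar S_{A,r}\bar\Sigma_{A,r}\bar T_{A,r}^T=O$, so $S_{r,A}S_{r,A}^TA=S_{r,A}\Sigma_{r,A}T_{r,A}^T=A_r$. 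Consequently $QQ^TA=A_r$, and (\ref{eqsvdpert}) gives $\|A-QQ^TA\|=\|A-A_r\|=\sigma_{r+1}(A)$, which is the desired identity.

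The only genuine subtlety — and the step I would flag as the main obstacle — is the independence of the result from the particular choice of orthogonal basis $Q$ for $\mathbb{S}_{r,A}$; this is handled by the observation that the projector $QQ^T$ onto a subspace is a basis-independent object, i.e. $QX(QX)^T=QQ^T$ for any orthogonal $r\times r$ matrix $X$ (and more generally $QQ^T=S_{r,A}S_{r,A}^T$ whenever $\range(Q)=\range(S_{r,A})$). A secondary point worth a sentence is the degenerate boundary case $r=\rho=\rank(A)$: then $\bar A_\rho$ is the empty matrix, $A_r=A$, $QQ^TA=A$, and $\sigma_{r+1}(A)=0$, so the formula still holds. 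No estimation or probabilistic argument is needed here; the whole proof is the exact projector computation above together with a citation of (\ref{eqsvdpert}).
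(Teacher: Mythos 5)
Your proof is correct and is essentially the only natural argument; the paper itself gives no proof of Theorem \ref{thsng}, treating it as a standard consequence (citing only (\ref{eqopr})), and your computation --- identifying $QQ^T$ with the basis-independent orthogonal projector $S_{r,A}S_{r,A}^T$, showing $S_{r,A}S_{r,A}^TA = A_r$ from the block orthogonality $S_{r,A}^T\bar S_{A,r}=O$, and then invoking (\ref{eqsvdpert}) --- supplies exactly the details the paper leaves implicit.
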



\begin{theorem}\label{thover} 
Assume two
matrices $A\in \mathbb R^{m\times n}$
and $H\in \mathbb R^{n\times r}$ and define the two matrices 
$A_r$ and $\bar A_r$ of (\ref{eqldtr}). Then 
 $AH=A_rH+\bar A_rH$ where $A_rH=S_{r,A}U$, $U=\Sigma_{r,A}T_{r,A}^TH$.
Furthermore
 the columns of the matrix $A_rH$ 
span the space $\mathbb S_{r,A}$
if $\rank (A_rH)=r$.
\end{theorem}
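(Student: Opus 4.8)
The plan is to prove Theorem \ref{thover} directly from the $r$-truncation of thin SVD in (\ref{eqldtr}). First I would write $A = A_r + \bar A_r$ from (\ref{eqldtr}), so that $AH = A_rH + \bar A_rH$ by distributivity of matrix multiplication; this is the first displayed claim and needs no work. Then I would substitute the explicit factored form $A_r = S_{r,A}\Sigma_{r,A}T_{r,A}^T$ from (\ref{eqldtr}) and set $U = \Sigma_{r,A}T_{r,A}^TH$, giving $A_rH = S_{r,A}U$ with $U \in \mathbb R^{r\times r}$ (since $T_{r,A}^T$ is $r\times n$, $H$ is $n\times r$, and $\Sigma_{r,A}$ is $r\times r$). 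This is the second claim, again essentially by definition and associativity.

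For the final assertion about the column span, I would argue as follows. The columns of $A_rH = S_{r,A}U$ always lie in the column span of $S_{r,A}$, which is by definition $\mathbb S_{r,A}$. So $\range(A_rH) \subseteq \mathbb S_{r,A}$, and $\dim \mathbb S_{r,A} = r$ since $S_{r,A}$ has $r$ orthonormal columns. If in addition $\rank(A_rH) = r$, then $\range(A_rH)$ is an $r$-dimensional subspace contained in the $r$-dimensional space $\mathbb S_{r,A}$, hence the two coincide. That completes the proof.

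I do not expect a genuine obstacle here: the statement is essentially a bookkeeping consequence of the SVD truncation notation set up in Section \ref{sosvdi1}, and the only substantive point is the elementary linear-algebra fact that a full-rank $m\times r$ matrix of the form $S_{r,A}U$ (with $U$ of rank $r$) has the same column span as $S_{r,A}$. If anything, the one place to be careful is confirming the dimensions of $U$ and the rank condition $\rank(A_rH)=r$ is equivalent to $\rank(U)=r$ (which holds because $S_{r,A}$ has full column rank, so left multiplication by it preserves rank); but even this is routine and can be stated in a line. The theorem is a lemma-style preparation for the subsequent error analysis of Algorithm \ref{algbasmp}, so the proof should be short and self-contained.
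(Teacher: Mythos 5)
Your proof is correct and is precisely the bookkeeping argument the paper implicitly relies on: the paper actually states Theorem \ref{thover} without a proof, treating it as an immediate consequence of the $r$-truncation notation in (\ref{eqldtr}), and your fill-in (distributivity, substitution of $A_r=S_{r,A}\Sigma_{r,A}T_{r,A}^T$, and the dimension-counting argument that an $r$-dimensional subspace of the $r$-dimensional space $\mathbb S_{r,A}$ must equal it) is exactly the intended reasoning. The only small caveat worth noting is that $U$ is $r\times r$ only because the theorem takes $H\in\mathbb R^{n\times r}$; in the later application with oversampling ($H\in\mathbb R^{n\times l}$, $l\ge r$) the same argument goes through with $U$ of size $r\times l$, and the range argument is unchanged.
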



These results together imply that 
the columns of the matrix $Q(AH)$ form an approximate 
orthogonal basis
of the linear space $\mathbb S_A$, and next we
estimate the error norms of this approximations.

\begin{theorem}\label{thuvwz} 
Keep the assumptions of Theorem \ref{thover}.  
Then

(i) $||\bar A_rH||_F\le \sigma_{r+1}(A)~||H||_F$.  

(ii) Furthermore if the matrix $T_{r,A}^TH$ is nonsingular, then
 $||(A_rH)^+||\le ||(T_{r,A}^TH)^{-1}||/\sigma_{r}(A)$. 
\end{theorem}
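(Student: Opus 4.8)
The plan is to work directly from the $r$-truncation of thin SVD (equations (\ref{eqthn})--(\ref{eqldtr})), using the explicit factorizations $A_r=S_{r,A}\Sigma_{r,A}T_{r,A}^T$ and $\bar A_r=\bar S_{A,r}\bar\Sigma_{A,r}\bar T_{A,r}^T$ and the fact that all of $S_{r,A}$, $\bar S_{A,r}$, $T_{r,A}$, $\bar T_{A,r}$ have orthonormal columns, together with the mixed submultiplicativity $\|CB\|_F\le\|C\|_2\,\|B\|_F$ and the basic identities (\ref{eqnrm+}).

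For part (i), I would write $\bar A_rH=\bar S_{A,r}\,\bar\Sigma_{A,r}\,\bar T_{A,r}^TH$. Left multiplication by the orthonormal-column matrix $\bar S_{A,r}$ leaves the Frobenius norm unchanged, so $\|\bar A_rH\|_F=\|\bar\Sigma_{A,r}\,\bar T_{A,r}^TH\|_F$. Applying $\|CB\|_F\le\|C\|_2\,\|B\|_F$ first with $C=\bar\Sigma_{A,r}$ (where $\|\bar\Sigma_{A,r}\|_2=\sigma_{r+1}(A)$ by the ordering of the singular values) and then with $C=\bar T_{A,r}^T$ (where $\|\bar T_{A,r}^T\|_2=1$ since $\bar T_{A,r}^T\bar T_{A,r}=I$) yields $\|\bar A_rH\|_F\le\sigma_{r+1}(A)\,\|H\|_F$.

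For part (ii), Theorem \ref{thover} gives $A_rH=S_{r,A}U$ with $U=\Sigma_{r,A}T_{r,A}^TH$; since $\Sigma_{r,A}=\diag(\sigma_j)_{j=1}^r$ is a nonsingular $r\times r$ matrix (as $r\le\rho=\rank(A)$, so $\sigma_r(A)>0$) and $T_{r,A}^TH$ is nonsingular by hypothesis, $U$ is a nonsingular $r\times r$ matrix and $A_rH$ has full column rank $r$. Because $S_{r,A}$ has orthonormal columns and $U$ is square and nonsingular, one verifies the Moore--Penrose conditions to obtain $(A_rH)^+=U^{-1}S_{r,A}^T$; then $\|(A_rH)^+\|=\|U^{-1}S_{r,A}^T\|=\|U^{-1}\|$ since $S_{r,A}^TS_{r,A}=I_r$ forces $\|BS_{r,A}^T\|=\|B\|$ for any $B$. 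Finally $\|U^{-1}\|=\|(T_{r,A}^TH)^{-1}\Sigma_{r,A}^{-1}\|\le\|(T_{r,A}^TH)^{-1}\|\,\|\Sigma_{r,A}^{-1}\|=\|(T_{r,A}^TH)^{-1}\|/\sigma_r(A)$, which is the claim.

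There is no deep obstacle here; the only care needed is in justifying the three norm facts invoked above (invariance of the Frobenius norm under an orthonormal-column factor, the pseudo-inverse identity $(S_{r,A}U)^+=U^{-1}S_{r,A}^T$, and invariance of the spectral norm under right multiplication by $S_{r,A}^T$). As an alternative route for part (ii) one could instead use (\ref{eqnrm+}), noting $\rank(A_rH)=r$ so that $\|(A_rH)^+\|=1/\sigma_r(A_rH)$, and then bound $\sigma_r(A_rH)=\sigma_r(\Sigma_{r,A}T_{r,A}^TH)\ge\sigma_r(A)/\|(T_{r,A}^TH)^{-1}\|$ via Lemma \ref{lepr1} applied to $\Sigma_{r,A}$; the pseudo-inverse-identity argument is cleaner and is the one I would present.
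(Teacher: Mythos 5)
Your proof is correct and follows essentially the same route as the paper's: part (i) by writing $\bar A_rH=\bar S_{A,r}\bar\Sigma_{A,r}\bar T_{A,r}^TH$ and peeling off factors via Frobenius-norm submultiplicativity, and part (ii) by the explicit pseudo-inverse formula $(A_rH)^+=(T_{r,A}^TH)^{-1}\Sigma_{r,A}^{-1}S_{r,A}^T$ together with $\|S_{r,A}^T\|=1$ and $\|\Sigma_{r,A}^{-1}\|=1/\sigma_r(A)$. One small improvement over the paper's wording: for the step $\|\bar\Sigma_{A,r}\bar T_{A,r}^TH\|_F\le\sigma_{r+1}(A)\|\bar T_{A,r}^TH\|_F$ the paper cites Lemma~\ref{lepr1}, which as stated gives only a \emph{lower} bound on singular values, whereas your direct use of $\|CB\|_F\le\|C\|_2\|B\|_F$ with $\|\bar\Sigma_{A,r}\|_2=\sigma_{r+1}(A)$ is the cleaner and more precisely justified argument.
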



\begin{proof}
Recall that
\begin{equation}\label{eqfrobun}
||U||=||U||_F=1,~||UAV||\le||A||,
~{\rm and}~||UAV||_F\le||A||_F~{\rm for~orthogonal~matrices}~U~{\rm and}~V. 
\end{equation} 
Then note that  
$||\bar A_rH||_F= ||\bar S_{A,r}\bar \Sigma_{A,r}\bar T_{A,r}^TH||_F\le
||\bar \Sigma_{A,r}\bar T_{A,r}^TH||_F$ by virtue of  bound (\ref{eqfrobun}).

Combine this bound with 
 Lemma \ref{lepr1}  and obtain that 
$||\bar A_rH||_F\le \sigma_{r+1}(A)~||\bar T_{A,r}^TH||_F$,
which is not greater than $\sigma_{r+1}(A)~||H||_F$
by virtue of  bound (\ref{eqfrobun}).
This proves part (i). 

Part (ii) follows because 
$(A_rH)^+=(S_{r,A}\Sigma_{r,A} T_{r,A}^TH)^{-1}=
(T_{r,A}^TH)^{-1}\Sigma_{r,A}^{-1}S_{r,A}^T$ if the matrix $T_{r,A}^TH$ is nonsingular
and because $||S_{r,A}||=1$ and $||\Sigma_{r,A}^{-1}||=1/\sigma_{r}(A)$. 
\end{proof}


Combine  Theorems \ref{thpert1}, \ref{thover}, and \ref{thuvwz} to obtain the following estimates.

\begin{corollary}\label{coerrb} 
Keep the assumptions of Theorem \ref{thover}, let the matrix $T_{r,A}^TH$ be nonsingular
and write $$||E||_F=\sigma_{r+1}(A)~||H||_F,$$ 
$$\Delta_+=\sqrt 2~||E||_F~||(T_{r,A}^TH)^{-1}||~/\sigma_{r}(A)= 
\Delta_+=\sqrt 2~||H||_F~||(T_{r,A}^TH)^{-1}||~\sigma_{r+1}(A)/\sigma_{r}(A).$$
Then 
$$\Delta=||Q(A_rH)^T-Q(AH)^T|| \le \Delta_++O(||E||_F^2).$$ 
\end{corollary}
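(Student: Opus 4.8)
The plan is to read Corollary~\ref{coerrb} as a perturbation statement about the orthogonalization map $M\mapsto Q(M)$ applied to the two matrices $A_rH$ and $AH=A_rH+\bar A_rH$. The matrix $\bar A_rH$ plays the role of the perturbation $E$, and by Theorem~\ref{thuvwz}(i) we already have the clean bound $\|\bar A_rH\|_F\le\sigma_{r+1}(A)\,\|H\|_F=\|E\|_F$. So the first step is simply to set $E:=\bar A_rH$ and record $\|E\|_F\le\|E\|_F$ as defined. Since $T_{r,A}^TH$ is assumed nonsingular, Theorem~\ref{thover} gives $\rank(A_rH)=r$, hence $A_rH$ has full column rank and $Q(A_rH)$ is well defined with range $\mathbb S_{r,A}$.

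The second and central step is to invoke Theorem~\ref{thpert1} — the perturbation bound for the $Q$-factor (equivalently, for the orthogonal projector onto the range) under an additive perturbation of a full-column-rank matrix. That theorem should say, in the form the authors intend, that $\|Q(M+E)Q(M+E)^T-Q(M)Q(M)^T\|$, or the analogous quantity $\Delta=\|Q(A_rH)^T-Q(AH)^T\|$ written in the corollary, is bounded by $\sqrt2\,\|E\|\,\|M^+\|+O(\|E\|^2)$, with the $\sqrt2$ coming from the standard sine/tangent estimate for the angle between the two ranges. Applying this with $M=A_rH$ and $E=\bar A_rH$ gives $\Delta\le\sqrt2\,\|E\|_F\,\|(A_rH)^+\|+O(\|E\|_F^2)$.

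The third step is to substitute the two ingredients already proved: $\|(A_rH)^+\|\le\|(T_{r,A}^TH)^{-1}\|/\sigma_r(A)$ from Theorem~\ref{thuvwz}(ii), and $\|E\|_F=\sigma_{r+1}(A)\,\|H\|_F$ from Theorem~\ref{thuvwz}(i). This yields
$$\Delta\le\sqrt2\,\|E\|_F\,\|(T_{r,A}^TH)^{-1}\|/\sigma_r(A)+O(\|E\|_F^2)=\Delta_++O(\|E\|_F^2),$$
which is exactly the claimed inequality, with $\Delta_+$ as defined (the displayed double-equality for $\Delta_+$ in the statement is just the same quantity written before and after substituting $\|E\|_F=\sigma_{r+1}(A)\|H\|_F$).

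I expect the only real subtlety to be bookkeeping about \emph{which} perturbation bound Theorem~\ref{thpert1} supplies and in what norm (spectral vs.\ Frobenius) the linear term is stated, so that the constant $\sqrt2$ and the placement of the $O(\|E\|_F^2)$ remainder come out verbatim as in the corollary; matching the hypothesis ``$T_{r,A}^TH$ nonsingular'' to the ``full-rank perturbed matrix'' hypothesis of Theorem~\ref{thpert1} (via Theorem~\ref{thover}) is the one place where care is needed. Everything else is a direct chaining of Theorems~\ref{thpert1}, \ref{thover}, and~\ref{thuvwz}, as the sentence preceding the corollary already advertises.
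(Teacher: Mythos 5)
Your proposal is correct and follows exactly the route the paper intends: apply Theorem~\ref{thpert1} with the unperturbed matrix $A_rH$ and perturbation $\bar A_rH$, use Theorem~\ref{thover} to identify $A_rH+\bar A_rH=AH$ and to see that $A_rH$ has full column rank $r$ when $T_{r,A}^TH$ is nonsingular, and then substitute the two bounds of Theorem~\ref{thuvwz}. The paper's own one-line proof (``Combine Theorems~\ref{thpert1}, \ref{thover}, and~\ref{thuvwz}'') is precisely this chaining, so there is nothing substantive to add.
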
 

 Next combine 
 Corollary \ref{copert} with Theorem \ref{thsng} and
employ the orthogonal projection $P_{AH}=Q(AH)Q(AH)^T$ (cf. (\ref{eqopr}))
to extend the latter estimate to bound the error norm of low-rank approximation of a matrix $A$ 
by means of Algorithm \ref{algbasmp}. 


\begin{corollary}\label{coover} 
Keep the assumptions of Corollary \ref{coerrb} and write
$\Delta_+'= \sigma_{r+1}(A)+2\Delta_+ ||A||$.
 Then 
$$\Delta'=||A-P_{AH}A||\le \Delta_+'+O(||E||_F^2||A||).$$
\end{corollary}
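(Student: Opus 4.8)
The plan is to combine the perturbation estimate of Corollary \ref{coerrb} with the exact identity of Theorem \ref{thsng} via a triangle inequality on the two orthogonal projections $P_{AH}=Q(AH)Q(AH)^T$ and $P_{A_rH}=Q(A_rH)Q(A_rH)^T$. First I would write
\begin{equation*}
\|A-P_{AH}A\|\le \|A-P_{A_rH}A\|+\|(P_{A_rH}-P_{AH})A\|.
\end{equation*}
For the first term, observe that under the hypothesis $\rank(A_rH)=r$ (which holds because $T_{r,A}^TH$ is nonsingular, by Theorem \ref{thover}) the column span of $A_rH$ equals $\mathbb S_{r,A}$, so $Q(A_rH)$ is an orthogonal matrix basis for $\mathbb S_{r,A}$ and Theorem \ref{thsng} gives $\|A-P_{A_rH}A\|=\sigma_{r+1}(A)$ exactly.

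For the second term, the idea is that a small change in an orthogonal matrix basis produces a correspondingly small change in the associated orthogonal projector. Corollary \ref{coerrb} already supplies $\Delta=\|Q(A_rH)^T-Q(AH)^T\|\le\Delta_++O(\|E\|_F^2)$. The step I would invoke here is Corollary \ref{copert} (the projector-perturbation bound alluded to just before the statement), which bounds $\|P_{A_rH}-P_{AH}\|$ by (a constant times) $\Delta$; tracking the constant $2$ as in the paper gives $\|P_{A_rH}-P_{AH}\|\le 2\Delta+O(\Delta^2)$. Then $\|(P_{A_rH}-P_{AH})A\|\le 2\Delta\,\|A\|+O(\Delta^2\|A\|)$, and substituting the bound on $\Delta$ yields $2\Delta_+\|A\|+O(\|E\|_F^2\|A\|)$, since $\Delta_+$ is linear in $\|E\|_F$ and hence $\Delta^2=O(\|E\|_F^2)$.

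Adding the two contributions gives $\Delta'\le \sigma_{r+1}(A)+2\Delta_+\|A\|+O(\|E\|_F^2\|A\|)=\Delta_+'+O(\|E\|_F^2\|A\|)$, as claimed. The main obstacle is the bookkeeping of which projector-perturbation inequality (Corollary \ref{copert}) is being used and with what constant, and making sure the higher-order terms from Corollary \ref{coerrb} are correctly absorbed: one must check that the $O(\|E\|_F^2)$ error in $\Delta$, once multiplied by $\|A\|$ and combined with the quadratic term from the projector bound, still collapses into a single $O(\|E\|_F^2\|A\|)$ term. Everything else is a routine triangle inequality together with the exact identity from Theorem \ref{thsng}.
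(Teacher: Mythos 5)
Your proposal matches the paper's proof essentially step for step: the same triangle inequality splitting off $P_{A_rH}A$, the same appeal to Theorem \ref{thsng} to get the exact value $\sigma_{r+1}(A)$ for the first term, and the same use of Corollaries \ref{copert} and \ref{coerrb} to bound $\|(P_{A_rH}-P_{AH})A\|\le 2\Delta\|A\|$ for the second. The only cosmetic difference is that you insert an extra $O(\Delta^2)$ when invoking Corollary \ref{copert} (whose statement already gives the factor-of-2 bound without an additional quadratic correction), but this is harmless and gets absorbed into the stated $O(\|E\|_F^2\|A\|)$ term.
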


\begin{proof}
Note that 
$||A-P_{AH}A||\le ||A-P_{M}A||+||(P_{M}-P_{AH})A||$
for any $m\times r$ matrix $M$.
Write $M=A_rH$, apply Theorem \ref{thsng}
and  obtain
$||A-P_{M}A||=\sigma_{r+1}(A)$.
Corollaries \ref{copert}
and \ref{coerrb} together imply that
$||(P_{M}-P_{AH})A||\le ||A||~||P_{A_{r}H}-P_{AH}||\le 2 \Delta ||A||$.
 Combine the above relationships.
\end{proof}


\begin{remark}\label{reaccr} 
Write $B_i=(A^TA)^iA$ and 
recall that $\sigma_j(B_i)=(\sigma_j(A))^{2i+1}$ for all positive integers
$i$ and $j$. Therefore
one can 
apply the power transforms
$A\rightarrow B_i$ 
for $i=1,2,\dots$ to
increase the
 ratio $\sigma_{r}(A)/\sigma_{r+1}(A)$, which
shows the gap between the two singular values.
Consequently
 the bound $\Delta_+$ on the error norm  of
the approximation of an orthogonal basis 
 of the leading singular space
$\mathbb S_{r,A}$ by $Q(B_iH)$ 
is expected to decrease as $i$
increases  (cf. \cite[equation (4.5)]{HMT11}).
We use the matrix $AH=B_0H$ in
Algorithm \ref{algbasmp}, but suppose we replace it with 
the matrices $B_iH$ for small positive integer $i$, or even for $i=1$,
which would amount just to symmetrization. Then 
we would obtain low-rank approximation with the optimum error
$\sigma_{r+1}(A)$ up to the terms of higher order in 
$\sigma_{r+1}(A)/\sigma_{r}(A)$
as long as the value $||H||_F||(T_{r,A}^TH)^{-1}||$
is reasonably bounded from above.  
The power transform
$A=B_0\rightarrow B_i$ requires to increase by a factor of $2i+1$
the number of ma\-trix-by-vec\-tor
multiplications involved, but for small positive integers $i$,
the additional  computational cost
is still dominated 
by  the costs of computing the SVD and rank-revealing factorizations.
\end{remark}


\begin{remark}\label{relss}
Let us summarize our analysis. Suppose
that
the ratio $\sigma_{r}(A)/\sigma_{r+1}(A)$
is large and that the matrix product
 $P=T_{r,A}^TH$ has full rank $r$
and is 
well-con\-di\-tioned.
Now set to 0
 the oversampling integer parameter  $p$  
of Algorithm \ref{algbasmp}.
Then,
by virtue of 
Theorem \ref{thuvwz} and
Corollaries \ref{coerrb} and \ref{coover},
 the algorithm 
 outputs a close approximation $Q(AH)$
to an orthogonal bases for the leading singular space
$\mathbb S_{r,A}$
of the input matrix $A$ 
and  a rank-$r$ approximation
to this matrix. Up to the terms of higher order, the error norm 
of the latter approximation
is within a factor of $1+||H||_F~||(T_{r,A}^TH)^{-1}||/\sigma_{r}(A)$
from the optimal bound $\sigma_{r+1}(A)$. 
By applying the above power transform
of the input matrix $A$  
at a low computational  cost, 
 we can  decrease the error norm even below the value
 $\sigma_{r+1}(A)$.
\end{remark}


\subsection{Supporting low-rank approximation with Gaussian multipliers}\label{sapg}


In this subsection we extend the results of the previous one to support
the choice of Gaussian multiplier $H$ in Algorithm \ref{algbasmp}, whose 
``actual outcome 
is very close to the typical outcome because of the measure 
concentration effect" \cite[page 226]{HMT11}.

\begin{theorem}\label{thrrk}
Suppose $A\in \mathbb R^{m\times n}$,
$A=S_A\Sigma_AT_A^T$ is its SVD of (\ref{eqsvd}),
$H=\mathbb R^{n\times r}$
is a Gaussian 
matrix, and $\rank(A)=\rho\ge r$.

(i) Then the matrix $T_{r,A}^TH$ is Gaussian.

(ii) Assume the values $\Delta_+$ and $\Delta_+'$ 
of Corollaries \ref{coerrb} and \ref{coover}
and the values $\nu_{F,n,r}$ and $\nu_{r,r}^+$
of Definition \ref{defnu}.
Then $\Delta_+=\sqrt 2~\nu_{F,n,r} \nu_{r,r}^+\sigma_{r+1}(A)/\sigma_{r}(A)~{\rm and}
 ~\Delta_+'= \sigma_{r+1}(A)+2\Delta_+ ||A||$.

\end{theorem}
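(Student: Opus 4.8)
The plan is to prove this in two short steps, closely mirroring the structure already used for GENP in Section \ref{ssgnp}.

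First I would establish part (i). Recall that $T_{r,A}$ consists of the first $r$ columns of the square orthogonal matrix $T_A$, so $T_{r,A}\in\mathbb R^{n\times r}$ satisfies $T_{r,A}^TT_{r,A}=I_r$; equivalently, $T_{r,A}$ is an $n\times r$ matrix with orthonormal columns. Then $T_{r,A}^TH$ is obtained from the Gaussian matrix $H\in\mathbb R^{n\times r}$ by left multiplication by the matrix $T_{r,A}^T\in\mathbb R^{r\times n}$, which has orthonormal \emph{rows}. By Lemma \ref{lepr3} (with $S=T_{r,A}^T$ and the trivial $T=I_r$), the product $T_{r,A}^TH$ is again a Gaussian matrix, now of size $r\times r$. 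This is the whole of part (i), and it is the fact that makes everything else go through.

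Next I would deduce part (ii) by substituting the distributional information from (i) into the deterministic bounds already derived. By Corollary \ref{coerrb}, for a \emph{nonsingular} matrix $T_{r,A}^TH$ one has
$$\Delta_+=\sqrt 2~\|H\|_F~\|(T_{r,A}^TH)^{-1}\|~\sigma_{r+1}(A)/\sigma_{r}(A).$$
Here $\|H\|_F=\nu_{F,n,r}$ by Definition \ref{defnu} since $H$ is an $n\times r$ Gaussian matrix. By part (i), $T_{r,A}^TH$ is an $r\times r$ Gaussian matrix; it is nonsingular with probability $1$ (either by Theorem \ref{thdgr1} or directly, since $\det$ is a nonvanishing polynomial in its entries), so with probability $1$ we may write $\|(T_{r,A}^TH)^{-1}\|=\|(T_{r,A}^TH)^{+}\|=\nu_{r,r}^{+}$, again by Definition \ref{defnu}. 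Substituting these two identifications into the displayed formula yields
$$\Delta_+=\sqrt 2~\nu_{F,n,r}~\nu_{r,r}^{+}~\sigma_{r+1}(A)/\sigma_{r}(A),$$
and then $\Delta_+'=\sigma_{r+1}(A)+2\Delta_+\|A\|$ is just the definition of $\Delta_+'$ from Corollary \ref{coover} with this value of $\Delta_+$ inserted. This completes the proof.

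There is essentially no obstacle here: the theorem is a packaging result, transporting the orthogonal-invariance of the Gaussian distribution (Lemma \ref{lepr3}) through the deterministic error analysis of Corollaries \ref{coerrb} and \ref{coover}. The only point requiring the slightest care is the hypothesis ``if the matrix $T_{r,A}^TH$ is nonsingular'' appearing in Theorem \ref{thuvwz}(ii) and Corollary \ref{coerrb}: one must note that this hypothesis holds almost surely for Gaussian $H$, so that the formulas of part (ii) are valid with probability $1$, which is exactly the sense of ``expect'' used throughout the paper. One could also remark, as in Remark \ref{relss}, that the practical content of the theorem is that $\nu_{F,n,r}$ and $\nu_{r,r}^{+}$ are controlled with high probability by Theorems \ref{thsignorm} and \ref{thsiguna}, so that $\Delta_+$ is small whenever the spectral gap $\sigma_{r}(A)/\sigma_{r+1}(A)$ is large; but that amplification step belongs to the corollaries that follow rather than to this theorem itself.
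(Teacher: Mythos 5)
Your proposal is correct and follows essentially the same route as the paper: the paper applies Lemma \ref{lepr3} to the full square orthogonal $T_A^T$ and then observes that the $r\times r$ leading submatrix $T_{r,A}^TH$ of the Gaussian matrix $T_A^TH$ is Gaussian, whereas you apply Lemma \ref{lepr3} directly to the rectangular $T_{r,A}^T$ (which is ``orthogonal'' in the paper's sense, $T_{r,A}^T T_{r,A}=I_r$); the two are trivially equivalent. Your unpacking of part (ii), including the almost-sure nonsingularity needed to invoke Corollary \ref{coerrb}, spells out what the paper's proof compresses to ``which implies part (ii).''
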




\begin{proof}
 $T_{A}^TH$ is a Gaussian matrix by virtue of Lemma \ref{lepr3}.
Therefore so is its square submatrix $T_{r,A}^TH$ as well.  This proves part (i),
which implies part (ii). 
\end{proof}


\begin{corollary}\label{coalg1}
A
Gaussian multiplier 
$H$ is expected 
to 
support safe numerical application of 
  Algorithm \ref{algbasmp} even
 where the oversampling integer parameter $p$ is set to 0.
\end{corollary}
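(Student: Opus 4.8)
The plan is to combine the deterministic error bounds from Corollaries~\ref{coerrb} and~\ref{coover} (which already handle a general multiplier $H$) with the probabilistic estimates for Gaussian matrices recalled in Section~\ref{srrm}, exactly as was done for GENP in Section~\ref{ssgnp}. Concretely, by Theorem~\ref{thrrk} the relevant quantities controlling the error are $\Delta_+=\sqrt 2~\nu_{F,n,r}\,\nu_{r,r}^+\,\sigma_{r+1}(A)/\sigma_{r}(A)$ and $\Delta_+'=\sigma_{r+1}(A)+2\Delta_+\|A\|$, where $\nu_{F,n,r}=\|T_{A}^TH\|_F$ acting on the Gaussian matrix $T_{r,A}^TH$ is an $n\times r$ Gaussian Frobenius norm and $\nu_{r,r}^+=\|(T_{r,A}^TH)^{-1}\|$ is the norm of the inverse of an $r\times r$ Gaussian matrix. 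So the whole task reduces to showing that these two random variables are nicely bounded with probability close to $1$.

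First I would invoke Theorem~\ref{thdgr1} (or directly the argument of Theorem~\ref{thdgr}) to conclude that the square Gaussian matrix $T_{r,A}^TH$ is nonsingular with probability $1$, so that the hypothesis ``$T_{r,A}^TH$ is nonsingular'' in Corollaries~\ref{coerrb} and~\ref{coover} holds almost surely and $\nu_{r,r}^+$ is finite. Next I would bound $\nu_{F,n,r}$: since $\|G\|_F^2$ is a $\chi^2$ variable with $nr$ degrees of freedom, $\nu_{F,n,r}\le\sqrt{nr}+t$ with probability at least $1-\exp(-ct^2)$ for a suitable constant; alternatively one simply uses $\nu_{F,n,r}\le\sqrt r\,\nu_{n,r}$ together with Theorem~\ref{thsignorm}. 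Then I would bound $\nu_{r,r}^+$ using Theorem~\ref{thsiguna} with $m=n=r$, which gives ${\rm Probability}\{\nu_{r,r}^+\ge r/x^2\}<x/\Gamma(2)=x$ for small $x$, i.e. $\nu_{r,r}^+$ exceeds a moderate bound only with small probability. Combining these two tail bounds via a union bound yields that, with probability close to $1$, the product $\|H\|_F\,\|(T_{r,A}^TH)^{-1}\|$ is at most of order $\sqrt{nr}\cdot\sqrt r/x$ for a chosen small $x$, hence $\Delta_+$ is within a modest factor of $\sigma_{r+1}(A)/\sigma_{r}(A)$.

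Finally I would feed these bounds back into Corollaries~\ref{coerrb} and~\ref{coover}: since $\sigma_{r+1}(A)/\sigma_{r}(A)$ is tiny by the numerical-rank-$r$ hypothesis, $\Delta_+$ and the higher-order term $O(\|E\|_F^2\|A\|)$ are small, so $Q(AH)$ is a close approximate orthonormal basis of the leading singular space $\mathbb S_{r,A}$ and $P_{AH}A=QQ^TA$ approximates $A$ with error norm close to the optimal $\sigma_{r+1}(A)$, all with $p=0$. This is precisely the assertion of Corollary~\ref{coalg1}. The one genuine subtlety — the ``hard part'' — is the control of $\nu_{r,r}^+$, the inverse norm of a \emph{square} Gaussian matrix, since square Gaussian matrices are only polynomially (not exponentially) well-conditioned; this is why the statement can only promise ``probability close to $1$'' rather than ``probability $1$,'' and why one must be careful to apply Theorem~\ref{thsiguna}/\ref{thmsiguna} in the $m=n$ regime rather than a rectangular one. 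Everything else is the same bookkeeping already carried out for GENP, now transported through the deterministic bounds of Section~\ref{saaba}.
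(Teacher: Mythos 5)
Your proposal is correct and follows essentially the same route as the paper, which proves Corollary~\ref{coalg1} in one line by combining Theorems~\ref{thdgr1} and~\ref{thrrk} with Corollary~\ref{cogfrwc}; you have simply unpacked the same chain of reasoning, using Theorem~\ref{thsiguna} in the square case $m=n=r$ to control $\nu_{r,r}^+$ (which is the content behind Corollary~\ref{cogfrwc}) and Theorem~\ref{thsignorm} (or the $\chi^2$ bound) to control $\nu_{F,n,r}$, then feeding these into the expressions of Theorem~\ref{thrrk}. Your remark that the polynomial (rather than exponential) tail of $\nu_{r,r}^+$ for a square Gaussian matrix is the reason the conclusion is only ``with probability close to $1$'' is an accurate reading of what Theorem~\ref{thsiguna} gives in the $m=n$ regime.
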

\begin{proof}
Combine Theorems \ref{thdgr1} 
 and \ref{thrrk} with
Corollary \ref{cogfrwc}.
\end{proof}


\subsection{Supporting low-rank approximation with random structured multipliers}\label{sapgm}


 Multiplication of an $n\times n$ matrix $A$ by a
 Gaussian matrix $H$ at Stage 1
of Algorithm \ref{algbasmp} requires $(2r-1)n^2$ flops, 
but  
we can save a factor of $r/\log (r)$ flops by applying
 structured random multipliers $H$.
In particular we can use 
 subsampled random Fourier transforms (SRFTs) of
\cite[equation (4.6)]{HMT11},
subsampled random Hadamard transforms (SRHTs) of
 \cite{T11}, 
the chains of Givens rotations (CGRs) of
 \cite[Remark 4.5.1]{HMT11}, and the leading Toeplitz submatrices
 $C_{n,r}$ and $C_{r,n}$ of 
random circulant $n\times n$ matrices $C$.  
We need just $n$ random parameters to define 
a Gaussian circulant  $n\times n$ matrix $C$
and its leading Toeplitz blocks $C_{n,r}$ and $C_{r,n}$, 
and similarly for the other listed classes of structured matrices.


\begin{example}\label{exsrft}
For two fixed integers $l$ and $n$, $1<l<n$, 
 SRFT  $n\times l$ matrices
are the matrices of the form $S=\sqrt{n/l}~D\Omega R$. Here 
$D$ is a random $n\times n$ diagonal
matrix whose diagonal entries are i.i.d. variables uniformly distributed on the 
unit circle $C(0,1)=\{x:~|x|=1\}$,  $\Omega$ is the DFT matrix,
and $R$ is a random $n\times l$
permutation matrix defined by random choice of $l$ columns under the uniform probability distribution
on the set of the $n$ columns
of the identity matrix $I_n$ (cf. \cite[equation (4.6) and Section 11]{HMT11}).
\end{example}

 Theorem \ref{thcpw} implies the following fact.

\begin{corollary}\label{cocpw}
Assume an $n\times l$ SRFT matrix $S$. Then 
 $\sqrt{l/n}~\Omega^{-1}S$ is an $n\times l$ submatrix of
a unitary circulant $n\times n$ matrix.
\end{corollary}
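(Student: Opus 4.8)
\textbf{Proof plan for Corollary \ref{cocpw}.}

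The plan is to unwind the definition of an SRFT matrix and invoke Theorem \ref{thcpw} directly. By Example \ref{exsrft}, an $n\times l$ SRFT matrix has the form $S=\sqrt{n/l}\,D\Omega R$, where $D=\diag(u_j)_{j=1}^n$ is a diagonal matrix whose diagonal entries $u_j$ lie on the unit circle $C(0,1)$, $\Omega$ is the $n\times n$ DFT matrix, and $R$ is an $n\times l$ permutation-type matrix selecting $l$ of the $n$ coordinate columns. First I would compute $\sqrt{l/n}\,\Omega^{-1}S=\sqrt{l/n}\,\Omega^{-1}\bigl(\sqrt{n/l}\,D\Omega R\bigr)=\Omega^{-1}D\Omega R$. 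The scalar factors $\sqrt{n/l}$ and $\sqrt{l/n}$ cancel, leaving the clean expression $\Omega^{-1}D\Omega R$.

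Next I would identify $C:=\Omega^{-1}D\Omega$ as an $n\times n$ circulant matrix. This is exactly the content of Theorem \ref{thcpw}: with ${\bf u}=(u_j)_{j=1}^n$ the vector of diagonal entries of $D$, the matrix $\Omega^{-1}\diag(u_j)_{j=1}^n\Omega$ is circulant, namely the circulant matrix whose first column is ${\bf c}=\Omega^{-1}{\bf u}$. Moreover, since every $u_j$ has modulus $1$, the diagonal matrix $D$ is unitary, hence $C=\Omega^{-1}D\Omega=\frac{1}{n}\Omega^{H}D\Omega$ is a product of unitary matrices (recall $\Omega^{-1}=\frac1n\Omega^H$, so $\Omega/\sqrt n$ is unitary), and therefore $C$ is a unitary circulant matrix in the sense of Example \ref{exunc}. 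Finally, right-multiplication by the permutation-type matrix $R$ simply extracts the $l$ columns of $C$ indexed by the chosen subset, so $\sqrt{l/n}\,\Omega^{-1}S=CR$ is precisely an $n\times l$ column submatrix of the unitary circulant matrix $C$, which is the claim.

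The argument is essentially a one-line substitution followed by citing Theorem \ref{thcpw}, so there is no real obstacle; the only point requiring a moment's care is bookkeeping the normalization constants ($\sqrt{n/l}$ versus $\sqrt{l/n}$) and confirming that the unit-modulus condition on the diagonal entries of $D$ is what upgrades ``circulant'' to ``unitary circulant.'' One should also note in passing that $R$ being a submatrix of $I_n$ means $CR$ genuinely consists of $l$ full columns of $C$ (not arbitrary linear combinations), which is what ``$n\times l$ submatrix of a unitary circulant $n\times n$ matrix'' is meant to assert.
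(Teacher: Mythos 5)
Your proposal is correct and follows exactly the route the paper intends: the paper gives no explicit proof, only the remark that Theorem \ref{thcpw} implies the claim, and your argument is precisely the unwinding of that remark (cancel the scalars, identify $\Omega^{-1}D\Omega$ as circulant via Theorem \ref{thcpw}, note unitarity from $|u_j|=1$, and observe that right-multiplication by $R$ picks out $l$ columns).
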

 
According to the extensive tests by many researchers,
various random structured $n\times l$ multipliers (such as SRFT, SRHT, CGR and CHR matrices)
support low-rank approximation already where 
the oversampling parameter $p=l-r$ is a reasonable constant
(see \cite{HMT11} and \cite{M11}).
In  particular 
 SRFT with oversampling by 20 is adequate in almost all
applications of low-rank approximations \cite[page 279]{HMT11}.
Likewise, in our extensive tests 
covered in Section \ref{stails}, 
Toeplitz multipliers defined as the $n\times r$ leading blocks of
$n\times n$ random circulant 
matrices
consistently  
 supported low-rank approximation
 without oversampling
as efficiently as Gaussian multipliers. 

As in the case of  our
randomized support for GENP and block Gaussian elimination,
formal analysis of
the impact of random structured multipliers 
is complicated because we cannot  use  Lemma \ref{lepr3}.
Nevertheless, by allowing substantial  oversampling,
one can still prove that SRFT multipliers 
are expected to support low-rank approximation 
of a matrix having  a small numerical rank.
     
\begin{theorem}\label{thsrtft}  {\rm Error bounds for
low-rank approximation with SRFT} (cf. \cite[Theorem 11.2]{HMT11}). 
Fix four integers $l$, $m$, $n$, and $r$ 
such that $4[\sqrt r+\sqrt {8\log(rn)n}]^2\log (r)\le l\le n$. 
Assume  an $m\times n$ matrix $A$ with singular
values $\sigma_1\ge\sigma_2\ge \sigma_3\ge \dots$, 
an $n\times l$ SRFT 
matrix $S$ of Example \ref{exsrft},
and $Y = AS$. 
Then with a probability  $1-O(1/r)$ it holds that
$$||(I-P_Y )A|| \le \sqrt{1+7n/l}~\sigma_{r+1}~{\rm and}
~||(I-P_Y )A||_F \le \sqrt{1+7n/l}~(\sum_{j>r} ~\sigma^2_j)^{1/2}.$$
\end{theorem}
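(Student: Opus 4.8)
The statement is essentially a restatement of Theorem 11.2 of \cite{HMT11}, so the plan is to reduce our Theorem \ref{thsrtft} to that result rather than to reprove it from scratch. The key observation is that Corollary \ref{cocpw} identifies $\sqrt{l/n}\,\Omega^{-1}S$ as an $n\times l$ submatrix of a unitary circulant matrix, but for the present bound what we actually need is only the structure of the SRFT matrix $S=\sqrt{n/l}\,D\Omega R$ itself as used in \cite{HMT11}. So first I would recall the precise formulation of \cite[Theorem 11.2]{HMT11}, noting that there the error bounds are stated for $Y=AS$ with $S$ an SRFT matrix and $P_Y$ the orthogonal projection onto $\range(Y)$, under the hypothesis $4[\sqrt r+\sqrt{8\log(rn)}\,]^2\log r\le l\le n$. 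Our hypothesis $4[\sqrt r+\sqrt{8\log(rn)n}\,]^2\log(r)\le l\le n$ is (at least) as strong, so the cited theorem applies verbatim.

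\textbf{Main steps.} Second, I would spell out the two spectral-norm and Frobenius-norm conclusions: with probability $1-O(1/r)$,
\[
\|(I-P_Y)A\|\le\sqrt{1+7n/l}\;\sigma_{r+1},\qquad
\|(I-P_Y)A\|_F\le\sqrt{1+7n/l}\;\Bigl(\sum_{j>r}\sigma_j^2\Bigr)^{1/2}.
\]
These are exactly the displayed inequalities in the statement, so no further work is needed once the reduction is justified. Third, I would address the one genuine gap: our Algorithm \ref{algbasmp} computes $Q=Q(AH)$ and the projection $P_{AH}=QQ^T$, whereas \cite{HMT11} works with $P_Y$ for $Y=AS$; since $Q(AS)$ has range equal to $\range(AS)$ whenever $AS$ has full column rank $l$ (which holds with probability $1$ by the genericity argument analogous to Theorem \ref{thdgr}, or is absorbed into the failure probability $O(1/r)$), we have $P_{AS}=P_Y$ and the two formulations coincide. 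This makes explicit that the theorem transfers to the circulant-submatrix multipliers of Remark \ref{remsrtft} as well, via Corollary \ref{cocpw}: multiplying a matrix by $C_{n,l}$, the leading block of a random circulant $C$, differs from multiplying by an SRFT matrix only by the fixed unitary factors $\Omega^{-1}$ and a column permutation, neither of which changes the column span up to the same full-rank caveat.

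\textbf{The main obstacle.} The only real subtlety is bookkeeping about what \cite[Theorem 11.2]{HMT11} actually assumes versus what we assume: their theorem is stated with $l$ a fixed target and the SRFT drawn as in Example \ref{exsrft}, and one must check that our slightly different (stronger) numerical condition on $l$, together with our normalization $S=\sqrt{n/l}\,D\Omega R$, matches theirs so that the constant $\sqrt{1+7n/l}$ and the probability $1-O(1/r)$ come out unchanged. I expect this to be routine — essentially a matter of quoting the right lemma with the right constants — but it is the step that needs care. No new estimate is required; the substance of the proof lives entirely in \cite{HMT11}, and our contribution here is only the transfer to the circulant-block setting. Hence the ``proof'' is a short reduction argument plus the remark that the same bound holds for $n\times l$ products of random circulant and random permutation matrices.
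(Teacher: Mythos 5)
Your proposal takes the same route as the paper: Theorem \ref{thsrtft} is stated in the paper with the citation ``(cf.\ \cite[Theorem 11.2]{HMT11})'' and no proof is supplied — it is treated as a direct quotation of the Halko--Martinsson--Tropp result, exactly as you propose. Your added bookkeeping (noting that the printed hypothesis with $\sqrt{8\log(rn)n}$ is at least as strong as HMT11's $\sqrt{8\log(rn)}$, and that $P_Y=P_{AS}$ since the range is what matters) is sound, though the paper itself does not spell it out.
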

 
\begin{remark}\label{remsrtft}
Clearly the theorem still holds
if we  replace the matrix $S$ by the matrix $US$ for 
a unitary matrix $U=(1/\sqrt n)\Omega^{-1}$.
In this case $US=CR$ for the matrix $R$ of Example 
\ref{exsrft} and the circulant matrix $C=\Omega^{-1}D\Omega$ 
 (cf. Theorem \ref{thcpw}).
By virtue of Theorem \ref{thsrtft} 
we can expect that Algorithm \ref{algbasmp} would produce a rank-$r$ approximation 
if we choose a multiplier $H$ being 
an SRFT $n\times l$ matrix  
or
the  $n\times l$ submatrix $CP$ of $n\times n$
 random unitary circulant matrix $C$ made up of its $l$ randomly selected
columns where the selection is defined by the matrix $P$
of Example \ref{exsrft} and where $l$ is an 
integer of order $r\log(r)$. 
Recall that multiplication of an $n\times n$ Toeplitz matrix 
by an $n\times l$ matrix $US=CP$ involves $O(nl \log(n))$ flops \cite{p01},
versus $O(n^2l)$ in the straightforward algorithm.
\end{remark}


\section{Numerical Experiments}\label{sexp}

We performed numerical experiments with  random general,  circulant and Toeplitz  
matrices  by using MATLAB in the Graduate Center of the City University of New York 
on a Dell computer with a Intel Core 2 2.50 GHz processor and 4G memory running 
Windows 7. In particular we generated
Gaussian matrices by using the standard normal distribution function randn of MATLAB,
and we use the MATLAB function rand for generating numbers in the range $[0,1]$
under the uniform probability distribution function for Example \ref{exc1}. 
We display our estimates obtained in terms of the spectral matrix norm but 
our tests showed similar results where we used the Frobenius norm  instead.


\subsection{GENP with Gaussian and random circulant multipliers}\label{sexgeneral}

 
We applied both GENP and the
preprocessed GENP to $n\times n$ DFT matrices $A=\Omega$
and to the matrices $A$ generate as follows.
We fixed $n=2^s$ and $k=n/2$ for $s=6,7,8,9,10$, and first,
by following \cite[Section 28.3]{H02},
generated a $k\times k$ matrix $A_k=U\Sigma V^T$ where we chose  $\Sigma=\diag (\sigma_i)_{i=1}^k$ with
$\sigma_i=1$ for $i=1,\dots,k-4$ and $\sigma_i=0$ for $i=k-3,\dots,k$ and 
where $U$ and $V$ were $k\times k$  random orthonormal  
matrices, computed as 
the $k\times k$ factors $Q(X)$ in the QR factorization of $k\times k$ random matrices $X$. 
Then we
 generated Gaussian  Toeplitz  matrices 
$B$, $C$ and $D$  such that 
 $||B||\approx ||C||\approx ||D||\approx ||A_k||\approx 1$
and defined 
the $n\times n$ matrix  
 $A=\begin{pmatrix}
A_k  &  B  \\
C    &  D
\end{pmatrix}.$
For every dimension $n$, $n=64, 128, 256, 512, 1024$  we run 1000 numerical tests
where we solved the linear system $A{\bf x}={\bf b}$ with  
Gaussian vector ${\bf b}$ and 
output the maximum, minimum and average relative residual norms 
$||A{\bf y}-{\bf b}||/||{\bf b}||$ as well as the standard deviation.
Figure \ref{fig:fig1} and Table \ref{tab61} show the norms of $A^{-1}$. 
They ranged from $2.2\times 10^1$ to  
$3.8\times 10^6$ in our tests. 

At first we describe the results of our tests for the latter class of matrices $A$.
As we expected GEPP has always output accurate solutions 
to the linear systems $A{\bf y}={\bf b}$
 in our tests (see Table \ref{tab62}). 
GENP, however, was expected to fail for these systems, 
because the $(n/2)\times (n/2)$   leading principal block $A_k$ of the matrix $A$  
was singular, having nullity 
$k-\rank (A_k)=4$.
Indeed  this caused poor performance of GENP in our tests,
which have consistently output corrupted solutions,   with  
relative 
residual norms
ranging from $10^{-3}$ to $10^2$. 

In view of Corollary \ref{cogmforalg} 
we expected to fix this deficiency by means of multiplication
by Gaussian matrices, and indeed  in all our tests
we observed 
residual norms below
 $1.3\times 10^{-6}$, and they decreased below $3.6\times 10^{-12}$ 
in a single step of iterative refinement
(see Table \ref{tab63}). Furthermore the tests 
showed the same power of preconditioning 
where we used the circulant multipliers of Examples \ref{exc1} and 
\ref{exunc} (see Tables  \ref{tab64} and \ref{tab65}). 
As can be expected, the output accuracy of GENP with 
preprocessing has deteriorated a little versus GEPP
 in our tests.
The output residual norms, however, were small enough to support
application of the inexpensive iterative refinement. Already its
single step decreased the average relative residual norm 
below $10^{-11}$ for $n=1024$ in all our tests with Gaussian multipliers
and to about $10^{-13}$ for $n=1024$ in all our tests with circulant multipliers
of Examples \ref{exc1} and \ref{exunc}.
 See further details in Figures \ref{fig:fig2} and \ref{fig:fig3} and  Tables \ref{tab63}--\ref{tab65}. This indicates that GENP with preprocessing 
followed by even a single step of iterative refinement 
is backward stable, similarly to the celebrated result of
\cite{S80}.

We also applied similar tests to the
$n\times n$ DFT matrix $A=\Omega$. The results were in very good accordance with our study
in Section \ref{srnd}. Of course in this case the solution of a linear system $A{\bf x}={\bf b}$
can be computed immediately as ${\bf x}=\frac{1}{n}\Omega^H{\bf b}$,
but we were  not seeking the solution, but were trying to
 compare the performance of GENP with and without preprocessing.
In these tests the norm $||A^{-1}||$ was fixed at $1/\sqrt n$.
 GEPP produced the solution within the relative residual norm between 
$10^{-15}$ and $10^{-16}$, but GENP  
failed on the inputs $\Omega$
both when we used no preprocessing 
and used preprocessing with random 
circulant multipliers of Examples \ref{exc1} and \ref{exunc}. 
In these cases the
  relative residual norms of the output approximations ranged between
$10^{-2}$ and $10^{4}$. In contrast GENP applied to the
inputs preprocessed with Gaussian multipliers 
produced quite reasonable approximations to the solution.
Already after a single
step of iterative refinement,
they have at least matched the level of GEPP. 
Table \ref{tab65a} displays these norms
in some detail.

\begin{figure}
	\centering
		\includegraphics[width=0.80\textwidth]{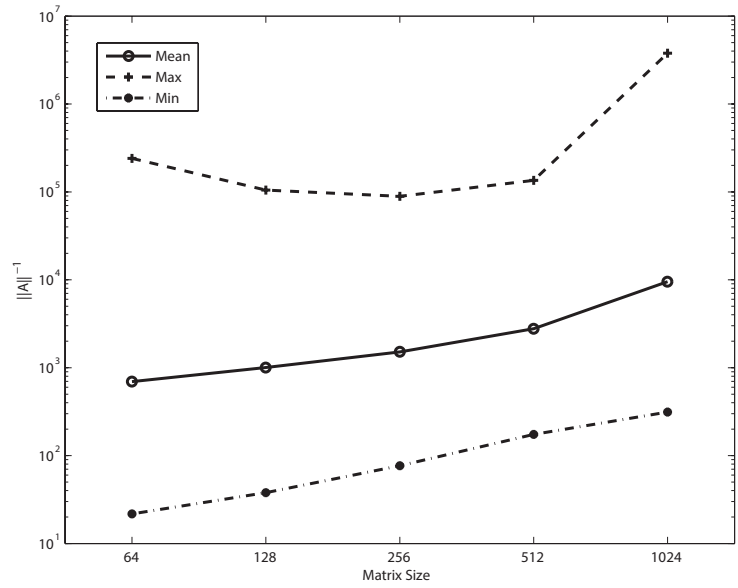}
	\caption{Norm of $A^{-1}$}	
	\label{fig:fig1}
\end{figure}

\begin{figure}
	\centering
		\includegraphics[width=0.80\textwidth]{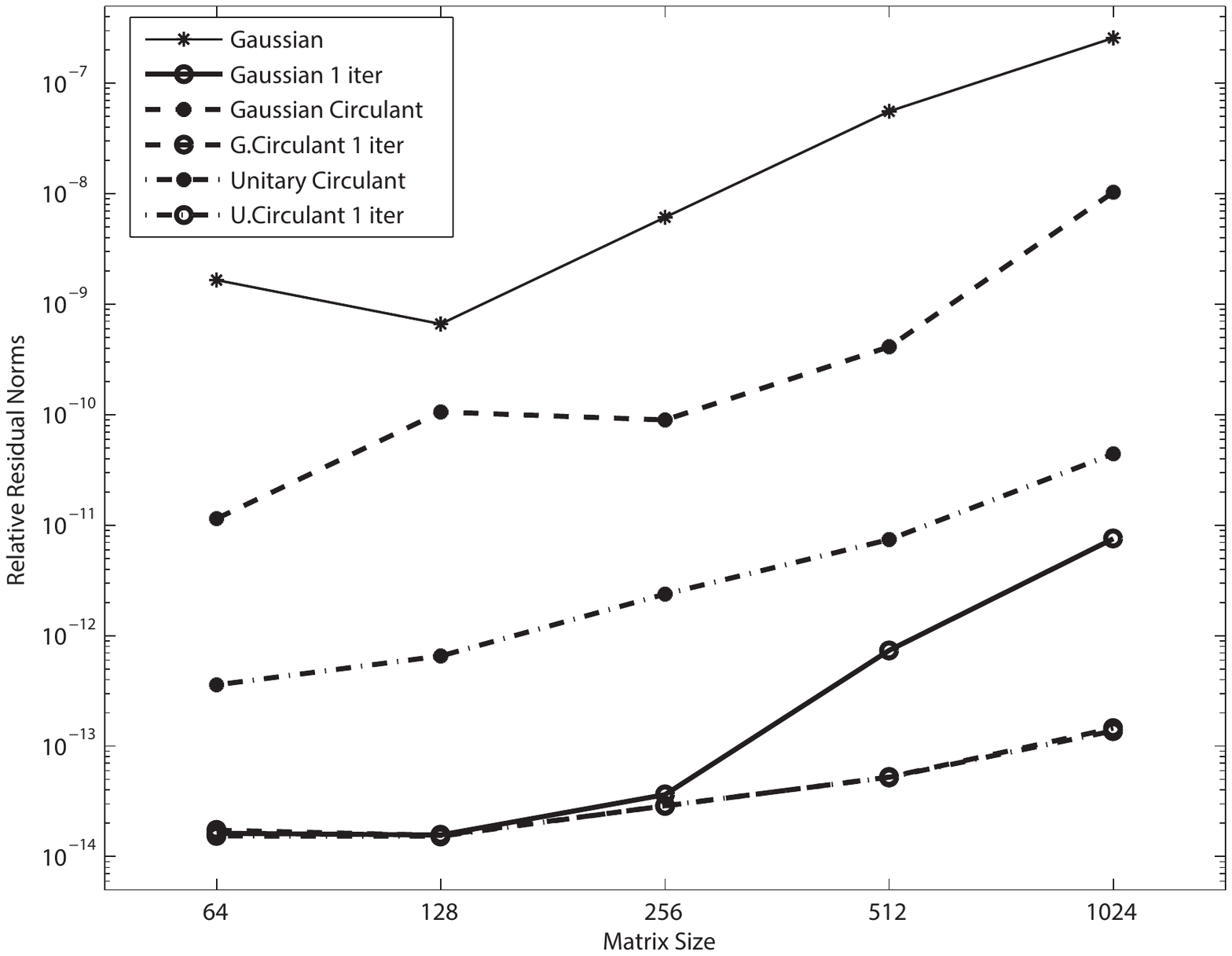}
	\caption{Average relative residual norms for GENP by using random multipliers.
	The two broken lines representing one iteration of circulant multipliers are overlapping at the bottom  of the display
	}
	\label{fig:fig2}
\end{figure}

\begin{figure}
	\centering
		\includegraphics[width=0.80\textwidth]{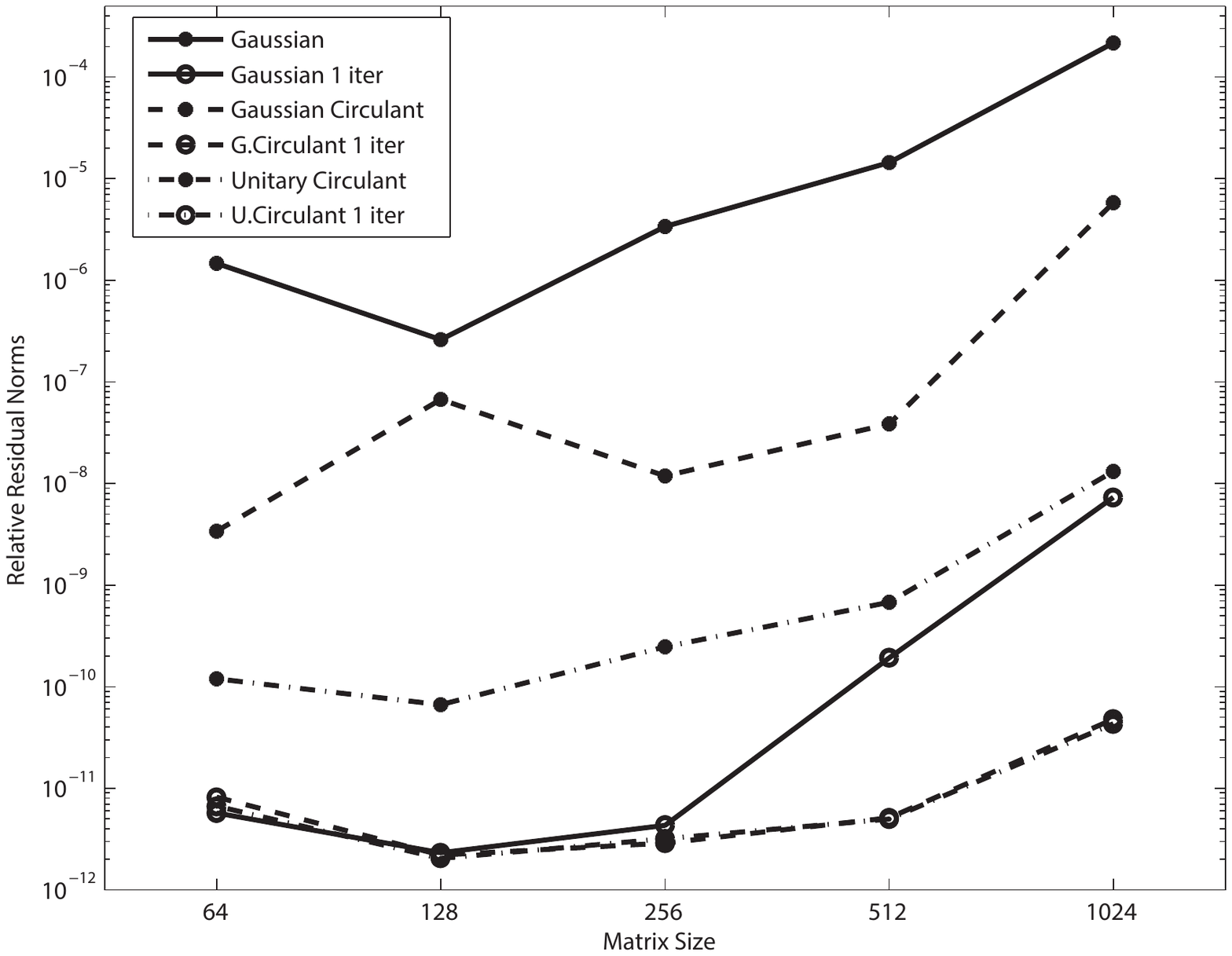}
	\caption{Maximum relative residual norms for GENP by using random multipliers.
	The two broken lines representing one iteration of circulant multipliers are overlapping at the bottom of the display
}
	\label{fig:fig3}
\end{figure}


\subsection{Approximation of the leading singular spaces and 
low-rank appro\-xi\-ma\-tion of 
a matrix}\label{stails}  



We approximated
the $r$-dimensional leading singular spaces of $n\times n$
matrices $A$ that have
numerical rank $r$, and we also
 approximated these matrices with matrices of rank $r$. 
For $n=64,128,256, 512,1024$ and  $r=8,32$ we  generated $n\times n$   random
orthogonal 
matrices $S$ and $T$ and diagonal matrices 
$\Sigma=\diag(\sigma_j)_{j=1}^n$ such that $\sigma_j=1/j,~j=1,\dots,r$,
$\sigma_j=10^{-10},~j=r+1,\dots,n$ (cf. \cite[Section 28.3]{H02}). 
Then we 
computed the input matrices
$A=S_A\Sigma_A T_A^T$, for which $||A||=1$ and $\kappa (A)=10^{10}$.
Furthermore 
we  generated 
 $n\times r$ random matrices $H$ 
and computed the matrices
$B_{r,A}=AH$, $Q_{r,A}=Q(B_{r,A})$, $S_{r,A}$, 
 $T_{r,A}$,  
$Y_{r,A}=Q_{r,A}^TS_{r,A}$, 
 and 
$Q_{r,A}Q_{r,A}^TA$.
Figures \ref{fig:fig4}--\ref{fig:fig7} and
Tables \ref{tab66}--\ref{tab614} display the resulting data on the 
residual norms 
${\rm rn}^{(1)}=||Q_{r,A}Y_{r,A}-S_{r,A}||$ and 
${\rm rn}^{(2)}=||A-Q_{r,A}Q_{r,A}^TA||$, 
obtained in 1000 runs of our tests
for every pair of $n$ and $r$. In these figures and tables 
${\rm rn}^{(1)}$ 
denotes the residual norms 
of the approximations of the matrix bases for the 
leading singular spaces $\mathbb S_{r,A}$, and
${\rm rn}^{(2)}$ denotes the residual norms 
of the approximations
of the matrix $A$ by the rank-$r$ matrix
$Q_{r,A}Q_{r,A}^TA$.

 Figures \ref{fig:fig4} and  \ref{fig:fig5} 
and Tables \ref{tab66}--\ref{tab613} show the norm ${\rm rn}^{(1)}$.  
The last column of each of the tables displays 
the ratio of the observed values ${\rm rn}^{(1)}$ 
 and its upper bound 
$\tilde \Delta_+= \sqrt 2~\frac{\sigma_{r+1}(A)}{\sigma_r(A)}||H||_F ||(T_{r,A}^TH)^{-1}||$ 
 estimated up to the higher order terms (cf. Corollary \ref{coerrb}). 
In our tests we had  $\sigma_{r}(A)=1/r$ and $\sigma_{r+1}(A)=10^{-10}$.
Table \ref{tab66} covers the case where   
we generated  Gaussian multipliers $H$.
Tables \ref{tab610} and \ref{tab613} cover the cases where 
we generated 
random $n\times n$
circulant matrices of Examples \ref{exc1} and \ref{exunc}, respectively,
and applied their $n\times r$ Toeplitz leading blocks as multipliers $H$.

Figures \ref{fig:fig6} and \ref{fig:fig7}  and
Tables \ref{tab67}--\ref{tab614}
show similar results of our tests for the observed residual norms ${\rm rn}^{(2)}$
and their ratios with their upper bounds  $\tilde \Delta'_+=\sigma_{r+1}(A)+2\Delta_+ ||A||$, 
 estimated up to the higher order terms (cf. Corollary \ref{coover}). 

Tables \ref{tab69}--\ref{tab616} show some auxiliary information.
Namely, Table \ref{tab69} displays the data on the ratios $||(T_{r,A}^TH)^{-1}||/||(H_{r,r})^{-1}||$,
where $H_{r,r}$ denotes  the
$r\times r$ leading submatrix of the matrix $H$.
Tables \ref{tab616} and  \ref{tab617} display the average condition numbers 
of Gaussian $n\times n$ matrices and circulant
 $n\times n$  matrices $C$ of Example \ref{exc1},
respectively.
 
The test results are in quite good accordance with our theoretical study
of Gaussian multipliers and
suggest that the power of random circulant and Toeplitz multipliers is  similar 
to the power of Gaussian multipliers,
as in the case of various random structured multipliers
of \cite{HMT11} and \cite{M11}.

\begin{figure}
 \centering
	\includegraphics[width=0.80\textwidth]{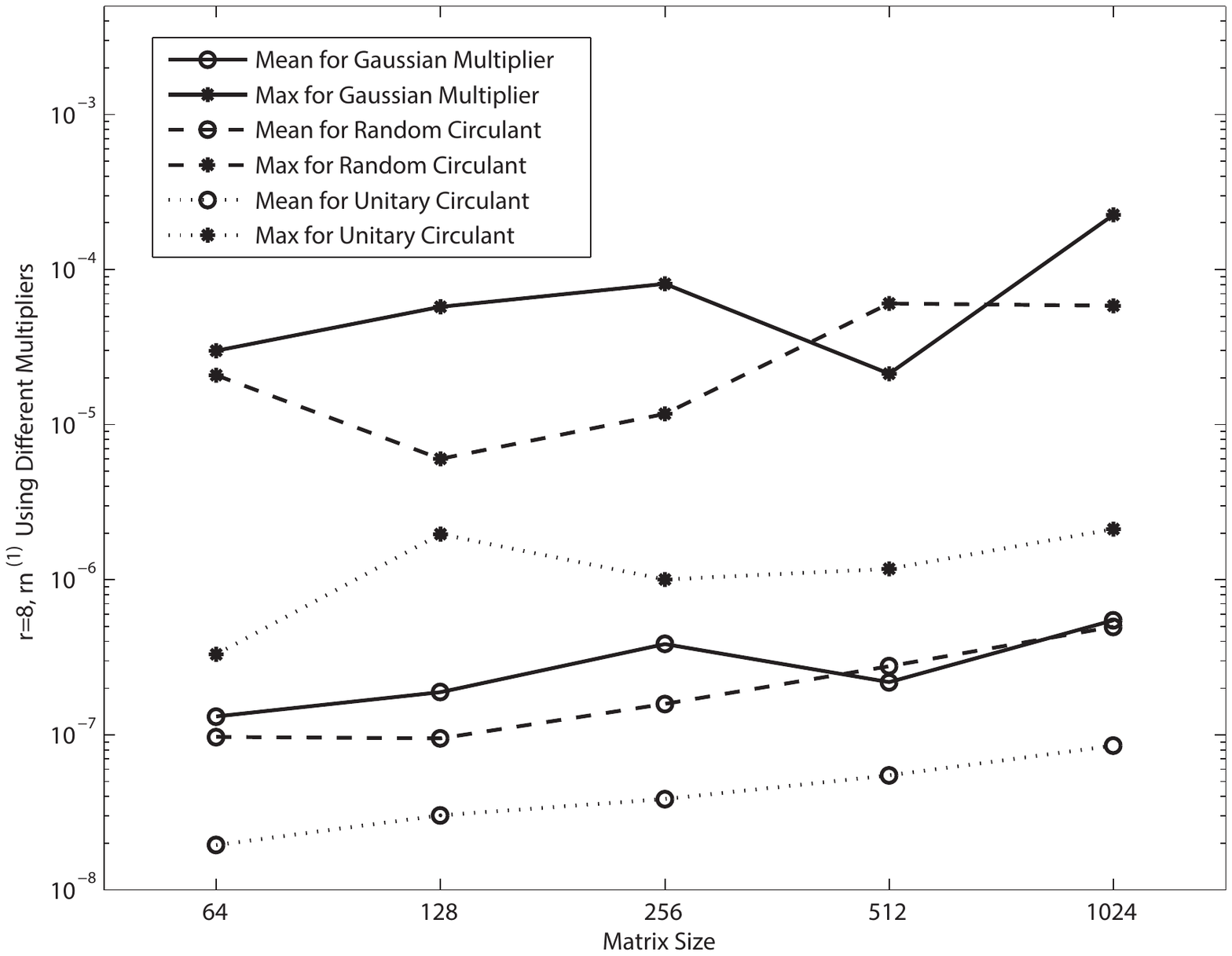}	
	\caption{Residual norms $rn^{(1)}$ using different random multipliers, case r=8}
	\label{fig:fig4}
\end{figure}

 
\begin{figure}
	\centering
		\includegraphics[width=0.80\textwidth]{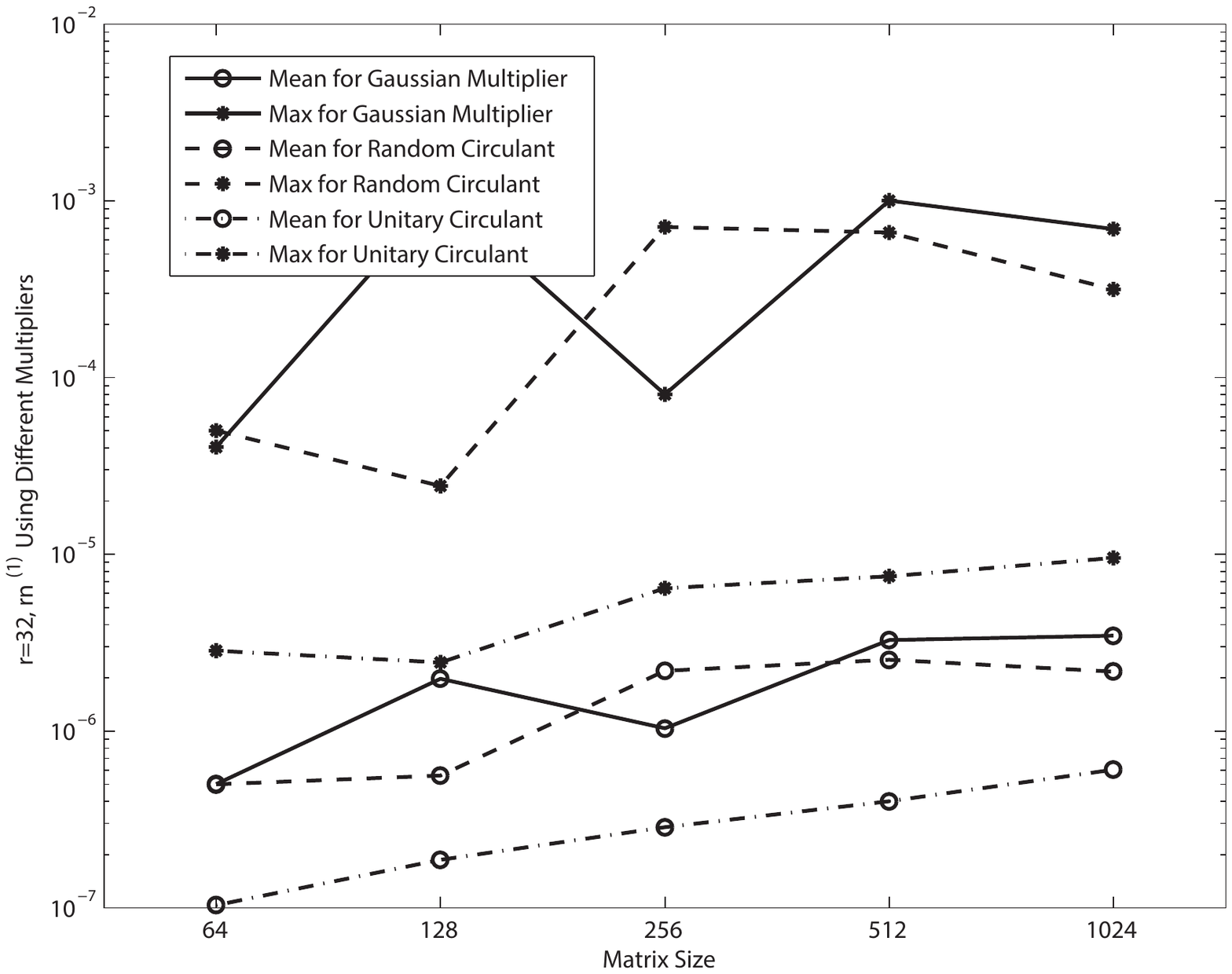}
	\caption{Residual norms $rn^{(1)}$ using different random multipliers, case r=32}
	\label{fig:fig5}
\end{figure}


\begin{figure}
	\centering
		\includegraphics[width=0.80\textwidth]{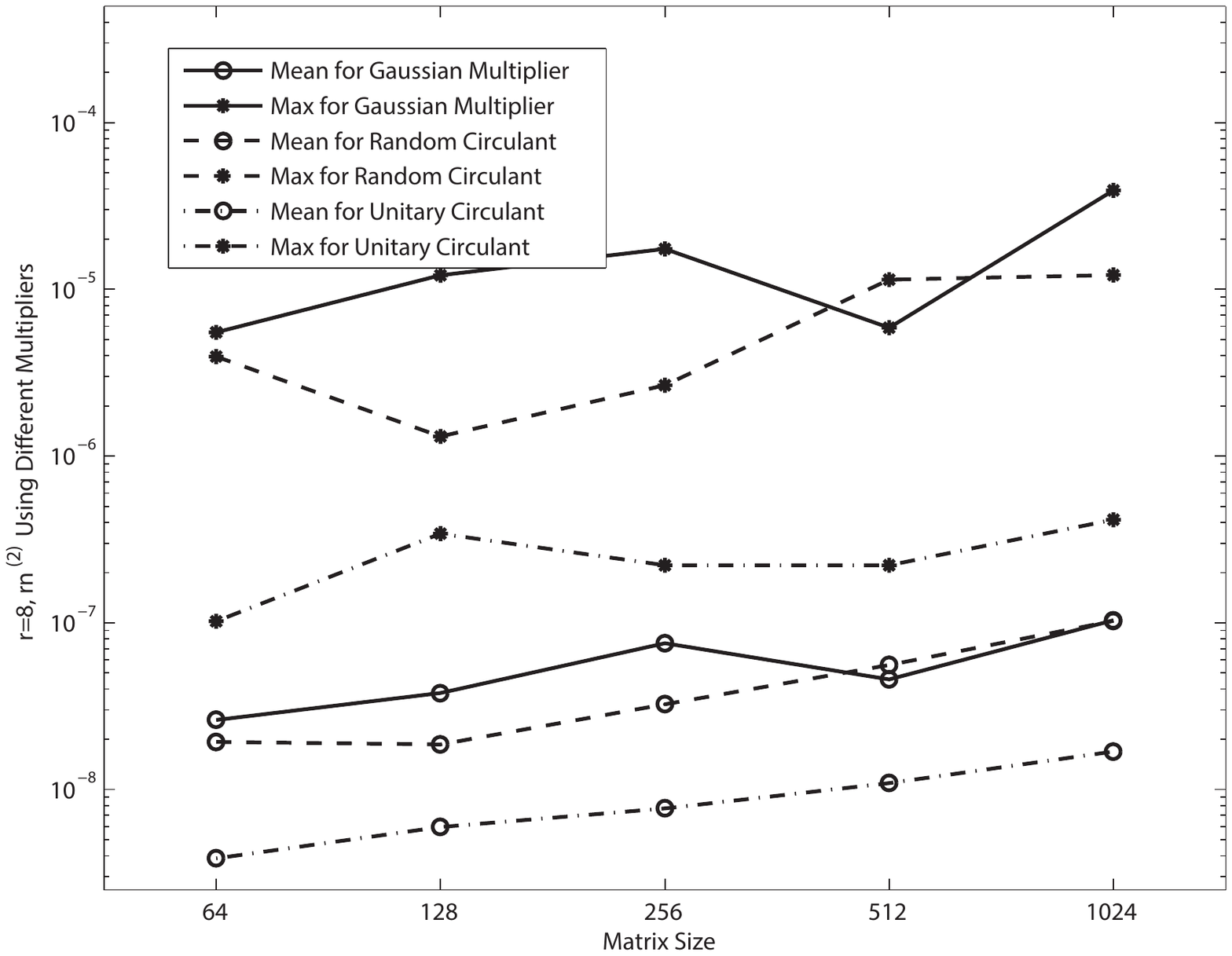}
	\caption{Residual norms  $rn^{(2)}$ using different random multipliers, case r=8}
	\label{fig:fig6}
\end{figure}

 
\begin{figure}
	\centering
		\includegraphics[width=0.80\textwidth]{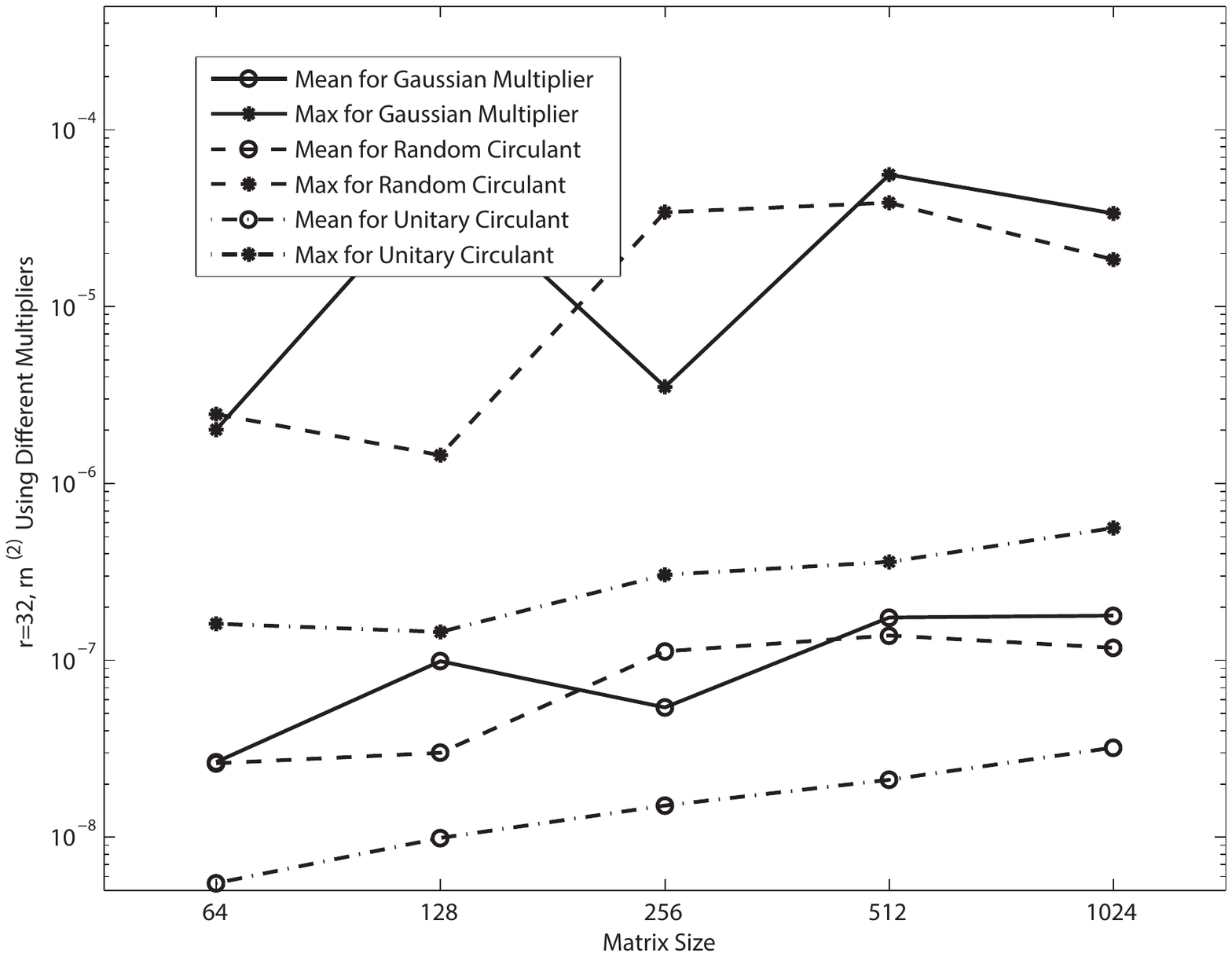}
	\caption{Residual norms $rn^{(2)}$ using different random multipliers, case r=32}
	\label{fig:fig7}
\end{figure}


\section{Conclusions}\label{sconcl}


It is  known that a
 standard Gaussian random matrix
(we refer to it as Gaussian for short) 
has full rank with probability 1 and
is well-con\-di\-tioned with a probability
close to 1.
These properties  motivated our application of 
 Gaussian multipliers
to advancing 
 matrix 
computations.
In particular we preprocessed
 well-con\-di\-tioned nonsingular  input matrices  
by using Gaussian multipliers to support 
GENP (that is, Gaussian elimination with no pivoting) and block
 Gaussian elimination.  
These algorithms can readily fail in 
practical numerical computations without 
preprocessing, but we proved that we can avoid
these problems  
  with a
probability 
close to 1 if we preprocess the input matrix  by pre- or
 post-multiplying it by a
Gaussian matrix.

Our tests were in good accordance with that formal 
result, that is, we generated matrices that 
were hard for GENP, but the problems were consistently 
avoided when we preprocessed the inputs with Gaussian multipliers.
In that case a single loop of iterative refinement was always sufficient
to match the output accuracy of the customary algorithm of  GEPP,
 indicating that GENP with preprocessing 
followed by even a single step of iterative refinement 
is backward stable, similarly to the celebrated result of
\cite{S80}. 

In our tests we observed  similar results even where we applied  Gaussian 
 circulant (rather than Gaussian) multipliers.
Under
this choice we 
 generated only  $n$
random parameters for an $n\times n$
input, and the multiplication stage was
accelerated by a factor of $n/\log (n)$. 
The acceleration factor increases to $n^2/\log (n)$
 when  
the input matrix has the structure of Toeplitz type, but
we could support
numerical stabilization of GENP with Gaussian circulant multipliers  
only empirically.
Moreover, we proved that with a high probability 
Gaussian  circulant multipliers cannot fix numerical instability
 of the elimination algorithms 
for a  specific narrow class of inputs
 (see Theorem \ref{thcgenp}
and Remark  \ref{remext}).

This should motivate the search for 
alternative randomized structured multipliers
 that 
 would be expected to stabilize numerical performance 
of GENP and block Gaussian elimination
for any input or, say, for any 
Toeplitz and Toeplitz-like input matrix.
Among the candidate multipliers,
one can consider the products of random 
circulant and skew-circulant matrices,
possibly used as both pre- and post-multipliers.
Suppose that indeed they are expected to stabilize 
block Gaussian elimination numerically.
Then their support would be valuable for 
numerical application of 
 the MBA celebrated  
algorithm, because it is superfast 
for Toeplitz and Toeplitz-like input matrices
 and hence for their products with 
circulant and skew-circulant matrices 
(cf., e.g.,  \cite{B85},  \cite[Chapter 5]{p01},  and 
\cite{PQZ11}).  
 
We have extended our analysis to the problem of
rank-$r$ approximation of
 an $m\times n$ matrix $A$ 
 having a numerical rank $r$.
With 
 a probability close to $1$
the column set of
the matrix  $AH$,
for  an $n\times (r+p)$ Gaussian matrix $H$
and a small positive
oversampling integer parameter $p$,
approximates a basis for the left  
leading singular
space $ \mathbb S_{r, A}$
associated with the $r$ largest singular values
of an $m\times n$ matrix $A$. 
Having  such an approximate basis 
available, one can readily approximate 
the matrix $A$ by a matrix of rank $r$.

This is an efficient, well developed algorithm
(see \cite{HMT11}), but we 
 proved that this algorithm is expected to
produce a reasonable rank-$r$ approximation 
with Gaussian multipliers already for $p=0$, that is, even 
without customary oversampling,
recommended in \cite{HMT11}.

Then again
in our tests   
the latter techniques 
were  efficient
even where
instead of  Gaussian multipliers
we applied  random Toeplitz 
 multipliers, defined as the maximal leading submatrices of 
random circulant matrices. This has
accelerated the multiplication stage and has
limited
randomization to $n$
 parameters for an $n\times n$
input. 

Formal proof of the power of random structured SRFT 
multipliers with substantial oversampling
is known for low-rank approximation  \cite[Section 11]{HMT11},
and we  immediately extended it to the case when
the products of random unitary circulant 
multipliers and random rectangular permutation matrices 
were applied instead of the SRFT matrices (see Section \ref{srnd}).

A natural research challenge
 is the combination
of our randomized  
multiplicative  preprocessing with
randomized  augmentation and
additive preprocessing, studied 
 in  
\cite{PQ10}, \cite{PQ12},
\cite{PQZC},  \cite{PQZ13}, and \cite{PQZb}.

\bigskip


{\bf Acknowledgements:}
Our research has been supported by NSF Grant CCF--1116736.
 and 
PSC CUNY Awards 64512--0042 and 65792--0043.
We are also grateful to the reviewer 
 for thoughtful helpful comments.





\bigskip




{\bf {\LARGE {Appendix}}}
\appendix 






\section{Norm and condition of Gaussian matrices and
 the errors of randomized low-rank approximation}\label{sssm}


Let us reproduce some known bounds for the expected values
of the norms
and condition numbers of random matrices. 

\begin{theorem}\label{thszcd}
(i) $\mathbb E (\nu_{n,n})\le 2\sqrt n$, 
(ii) $\mathbb E(\log(\kappa_{m,n}))\le \log (\frac{n}{m-n+1})+2.258$ for $m\ge n\ge 2$.
\end{theorem}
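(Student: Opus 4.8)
The plan is to handle the two parts separately: obtain~(i) from a sharp Gaussian comparison inequality, and obtain~(ii) by integrating the tail bound of Theorem~\ref{thmsiguna}. In both cases the work amounts to making explicit the reasoning behind \cite{DS01} and \cite{CD05}.

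For part~(i), I would invoke the Gordon (Chevet) comparison inequality for Gaussian processes, which yields the sharp estimate $\mathbb E(\nu_{m,n})=\mathbb E(\sigma_1(G))\le\sqrt m+\sqrt n$ for an $m\times n$ Gaussian matrix $G$ (see \cite{DS01}); taking $m=n$ gives $\mathbb E(\nu_{n,n})\le 2\sqrt n$. I would also point out why the cruder tail bound of Theorem~\ref{thsignorm} does not suffice here: writing $\mathbb E(\nu_{n,n})=\int_0^\infty\prob\{\nu_{n,n}>z\}\,dz$ and inserting that bound yields only $2\sqrt n+\int_0^\infty e^{-u^2/2}\,du=2\sqrt n+\sqrt{\pi/2}$, which exceeds $2\sqrt n$; and the trivial bound $\mathbb E(\nu_{n,n})\le(\mathbb E\,\|G\|_F^2)^{1/2}=n$ is too weak for $n>4$. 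Hence the genuinely two-sided Gaussian comparison is the right tool.

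For part~(ii), the starting point is the identity $\mathbb E(\log\kappa_{m,n})=\int_0^\infty\prob\{\log\kappa_{m,n}>t\}\,dt=\int_0^\infty\prob\{\kappa_{m,n}>e^t\}\,dt$, which is legitimate because $\kappa_{m,n}\ge 1$, so the integrand is supported on $t\ge 0$. I would split this integral at a threshold $t_0$ equal to the logarithm of the typical size of $\kappa_{m,n}$ predicted by Theorem~\ref{thmsiguna} (of order $\tfrac{m}{m-n+1}$ up to the absolute constant $6.414$), bound the probability trivially by $1$ on $[0,t_0]$, and on $[t_0,\infty)$ feed $e^t$ into Theorem~\ref{thmsiguna} to obtain a power-law tail proportional to $(6.414/e^t)^{\,m-n+1}$, whose integral over $[t_0,\infty)$ is an explicit constant, worst at $m-n+1=1$ and of order $1/(m-n+1)$ in general. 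Adding the two contributions, the threshold term furnishes the logarithmic quantity $\log\tfrac{n}{m-n+1}$ of the statement together with the $\log 6.414\approx 1.8585$ carried by the threshold, and the residual tail integral furnishes the remaining slack, so the total stays at most $\log\tfrac{n}{m-n+1}+2.258$; the arithmetic that pins down the constant $2.258$ uniformly in $m\ge n\ge 2$ is that of \cite[Section~4]{CD05}. One could instead bound $\mathbb E(\log\sigma_1(G))\le\log(\sqrt m+\sqrt n)$ by Jensen's inequality and $-\mathbb E(\log\sigma_n(G))=\mathbb E(\log\nu_{m,n}^+)$ by a separate integration of Theorem~\ref{thsiguna}, but bundling the two singular values through $\kappa_{m,n}$ wastes the least and is the cleaner route.

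The main obstacle is the constant bookkeeping in part~(ii): the splitting threshold must be chosen so that the validity range of Theorem~\ref{thmsiguna} is respected while keeping both the threshold contribution and the residual tail small enough to fit inside the stated additive $2.258$ for every admissible $(m,n)$, the tightest case being $m=n$. Part~(i) is immediate once the correct --- two-sided, not union-bound --- comparison inequality has been identified.
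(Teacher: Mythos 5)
Your part~(i) is sound: the Gaussian comparison-inequality bound $\mathbb E\,\sigma_1(G)\le\sqrt m+\sqrt n$ is precisely the content of \cite{S91} (and of \cite{DS01}), and setting $m=n$ yields $2\sqrt n$; your aside about the inadequacy of the tail bound from Theorem~\ref{thsignorm} is also correct. Note, however, that the paper itself gives no derivation for either part --- its ``proof'' is the one-line citation ``See \cite{S91} for part (i) and \cite[Theorem 6.1]{CD05} for part (ii)'' --- so you are supplying an argument the paper deliberately leaves to the references. For (i) you supply it correctly.

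For part~(ii) there is a concrete gap. You set the splitting threshold at the ``typical size'' predicted by Theorem~\ref{thmsiguna}, which you yourself correctly read off as being of order $m/(m-n+1)$ (times the absolute constant $6.414$); but two sentences later you assert that ``the threshold term furnishes the logarithmic quantity $\log\frac{n}{m-n+1}$.'' Those two quantities are not the same: the threshold gives $\log\frac{m}{m-n+1}=\log\frac{n}{m-n+1}+\log\frac{m}{n}$, and the two agree only when $m=n$. For $m>n$ the overshoot $\log\frac{m}{n}$ is not an absolute constant and cannot be absorbed into the additive $2.258$ uniformly over $m\ge n\ge 2$; already for $m=2n$ it contributes $\log 2\approx 0.69$, and it grows without bound. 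So integrating the $\kappa$-tail of Theorem~\ref{thmsiguna} as stated proves at best an $m$-version of the inequality, which is strictly weaker than the claimed bound whenever $m>n$. To obtain the $n$ in the numerator one genuinely needs the refined treatment of $\sigma_n(G)$ --- either a $\kappa$-tail normalized by $n/(m-n+1)$ rather than $m/(m-n+1)$, or the ``separate'' route via Jensen on $\log\sigma_1$ combined with a careful integration of the small-singular-value tail (Theorem~\ref{thsiguna}) --- and the latter is in fact what \cite{CD05} does to prove their Theorem~6.1, the route you describe and then set aside as less clean. Finally, a small citation slip: the expectation bound and its constant bookkeeping live in Section~6 of \cite{CD05}, not Section~4 (Section~4 contains only the tail estimate that serves as input).
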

\begin{proof}
See \cite{S91} for part (i) and \cite[Theorem 6.1]{CD05} for part (ii).
\end{proof}
The bounds of part (i) of the theorem are quite tight (cf. Theorem \ref{thsignorm}).
The bounds of part (ii) imply the following more specific estimates. 
\begin{corollary}\label{cocd}
$\mathbb E(\log(\kappa_{n,n}))\le \log (n)+ 2.258$,
$\mathbb E(\kappa_{m,n})\le 5(1-1/k)$ for $k+1=\frac{m}{n-1}$ and $m\gg n\gg 1$.
\end{corollary}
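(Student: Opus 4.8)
The plan is to obtain part (i) as a one-line specialization of Theorem~\ref{thszcd}(ii), and part (ii) by combining Theorem~\ref{thszcd}(ii) with the substitution $m=(k+1)(n-1)$ and the regime $m\gg n\gg 1$.

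For part (i), put $m=n$ in Theorem~\ref{thszcd}(ii): then $m-n+1=1$, so $\log\frac{n}{m-n+1}=\log n$ and the theorem gives $\mathbb E(\log\kappa_{n,n})\le\log n+2.258$ at once.

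For part (ii), from $k+1=\frac{m}{n-1}$ I would first record $m-n+1=(k+1)(n-1)-(n-1)=k(n-1)$, so that $\frac{n}{m-n+1}=\frac1k\cdot\frac{n}{n-1}$ and, crucially, the exponent $m-n+1=k(n-1)$ tends to $\infty$ under $m\gg n\gg 1$. The substantive step is to promote the logarithmic mean bound of Theorem~\ref{thszcd}(ii) to a bound on $\mathbb E(\kappa_{m,n})$ itself. This is not purely formal — Jensen's inequality only gives $e^{\mathbb E(\log\kappa_{m,n})}\le\mathbb E(\kappa_{m,n})$, the opposite of what is needed — so I would instead use the concentration of $\kappa_{m,n}$ that underlies \cite[Theorem~6.1]{CD05}: when $m$ is much larger than $n$ the condition number is sharply concentrated near $\frac{\sqrt m+\sqrt n}{\sqrt m-\sqrt n}$, its upper tail controlled by the Gaussian deviation bound of Theorem~\ref{thsignorm} for $\nu_{m,n}$ and its lower tail (that of $\sigma_{\min}(G)$) decaying polynomially with exponent of order $m-n+1$. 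Hence $\mathbb E(\kappa_{m,n})\le\frac{\sqrt m+\sqrt n}{\sqrt m-\sqrt n}(1+o(1))$ as $n\to\infty$ with $k$ fixed, and substituting $m=(k+1)(n-1)$ turns this into $\mathbb E(\kappa_{m,n})\le\frac{\sqrt{k+1}+1}{\sqrt{k+1}-1}(1+o(1))$. Finally, $m\gg n$ forces $k+1=\frac{m}{n-1}$ to be large, in particular $k\ge 3$; on $k\ge 3$ the quantity $\frac{\sqrt{k+1}+1}{\sqrt{k+1}-1}$ is decreasing and at most $3$, while $5(1-1/k)=\frac{5(k-1)}{k}\ge\frac{10}{3}>3$, so for all sufficiently large $n$ one gets $\mathbb E(\kappa_{m,n})\le 5(1-1/k)$. (The round constant $5$ is generous and absorbs the $o(1)$ correction.)

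The main obstacle is exactly this passage from $\mathbb E(\log\kappa_{m,n})$ to $\mathbb E(\kappa_{m,n})$: one must show that the part of $\mathbb E(\kappa_{m,n})$ coming from values of $\kappa_{m,n}$ above its typical value is negligible, which means working with the deviation estimates of Theorems~\ref{thsignorm} and~\ref{thmsiguna} (combined, since the tail bound of Theorem~\ref{thmsiguna} is only asserted for $x\ge m-n+1$, far above the typical value of $\kappa_{m,n}\frac{m}{m-n+1}$) rather than with the mean-of-logarithm bound alone. Checking that this tail contribution stays within the slack between $\frac{\sqrt{k+1}+1}{\sqrt{k+1}-1}$ and $5(1-1/k)$ — using that the lower-tail exponent $m-n+1=k(n-1)$ is large — is the only genuinely quantitative piece of the argument.
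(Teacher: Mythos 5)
Your treatment of the first estimate matches the paper's intent exactly: substituting $m=n$ into Theorem~\ref{thszcd}(ii) yields $m-n+1=1$ and $\mathbb E(\log\kappa_{n,n})\le\log n+2.258$ at once.

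For the second estimate you have put your finger on the real issue. The paper supplies no proof of Corollary~\ref{cocd}; it only remarks that ``the bounds of part (ii) imply the following more specific estimates,'' and for $\mathbb E(\kappa_{m,n})$ this cannot be taken literally: as you say, Jensen's inequality turns a bound on $\mathbb E(\log\kappa_{m,n})$ into a \emph{lower} bound $\mathbb E(\kappa_{m,n})\ge\exp\bigl(\mathbb E\log\kappa_{m,n}\bigr)$, not an upper one. What the authors almost certainly intend is that this second inequality is quoted directly from \cite[Theorem 6.1]{CD05} (the same source cited for Theorem~\ref{thszcd}(ii)), not derived from the log bound. Your proposal instead reconstructs a proof via concentration of $\kappa_{m,n}$ near $(\sqrt m+\sqrt n)/(\sqrt m-\sqrt n)$, and the arithmetic there is sound: with $m=(k+1)(n-1)$ one gets $m-n+1=k(n-1)$ and the typical value $\frac{\sqrt{k+1}+1}{\sqrt{k+1}-1}$, which is at most $3<5(1-1/k)$ once $k\ge 3$. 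You are also right that the necessary tail control is more than Theorem~\ref{thmsiguna} delivers, since its threshold $x\ge m-n+1$ sits far above the typical $\kappa$, so the bulk of the distribution has to be handled separately using the large exponent $m-n+1=k(n-1)$. In short, the paper asserts rather than proves the second bound, and your sketch is a more honest reconstruction of why it holds; the one caveat to record is that the stated bound genuinely fails for $k=2$ (typical value $\approx 3.73>2.5=5(1-1/2)$), so the hypothesis $m\gg n$ must be read as forcing $k\ge 3$, which your argument makes explicit but the paper does not.
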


The paper \cite{HMT11} proposed using Algorithm \ref{algbasmp}   
with the positive
oversampling integer parameter $p$ (see
 \cite[Algorithm 4.1 and Theorems 10.1 and 10.6]{HMT11}). This choice 
relied on the following bounds
of \cite[Theorems 10.5 and 10.6]{HMT11}
on the expected value $\mathbb E(||A-P_{AH}A||)$
of the output error norm of the  algorithm for $P_{AH}=QQ^T$,
\begin{equation}\label{eqerf}
\mathbb E(||A-P_{AH}A||_F)\le ((1+\frac{r}{p-1}\sum_{j>r}\sigma_{j}(A)^2)^{1/2},
\end{equation}
\begin{equation}\label{eqere}
\mathbb E(||A-P_{AH}A||)
\le ((1+\frac{r}{p-1}\sigma_{r+1}(A)^2)^{1/2}+
\frac{e\sqrt{r+p}}{p}\sum_{j>r}\sigma_{j}(A)^2)^{1/2}.
\end{equation}
Here is a simplified variant of the latter estimate from \cite[equation (1.8)]{HMT11},
\begin{equation}\label{eqeres}
\mathbb E(||A-P_{AH}A||)
\le (1+\frac{4\sqrt {r+p}}{p-1}\sqrt{\min\{m,n\}})\sigma_{r+1}(A).
\end{equation}

Quite typically the values $\sigma_j(A)$ for $j>r$
are not known, but one
can adapt the
 parameter $l$  
by using a posteriori error estimation. 
One can simplify this estimation by recalling   
from  \cite[equation (4.3)]{HMT11} that
\begin{equation}\label{eqrvm}
||A-P_{AH}A||\le 10\sqrt{2/\pi}\max_{j=1,\dots,r}(A-P_{AH}A)~{\bf g}_j
\end{equation}
with a probability at least $1-10^{-r}$.
Here ${\bf g}_j$ is the $j$th column of $n\times r$ Gaussian
matrix, 
that is, ${\bf g}_1,\dots,{\bf g}_r$ are $r$ independent  Gaussian vectors
of length $n$,
and $r$ is an integer parameter
(see our Remark \ref{reaccr} on improving this approximation).
Here is an alternative simplified expression from \cite[equation (1.9)]{HMT11},
\begin{equation}\label{eqrvms}
{\rm Probability}(||A-P_{AH}A||\le (1+9\sqrt{r+p}\sqrt{\min\{m,n\}})\sigma_{r+1}(A))\ge 1-3/p^p
\end{equation}
under some mild assumptions on the positive oversampling integer $p$.
The above bounds 
 show that 
low-rank  approximations of high quality can be obtained
by using a reasonably small oversampling integer parameter $p$, say $p=20$,
but they do not apply where $p\le 1$.
Our analysis of the basic algorithms 
relies on Corollary \ref{cor10} and provides
some reasonable formal support
 even where $p=0$. 


\section{Uniform random sampling and nonsingularity of random matrices}\label{srsnrm}


{\em Uniform random sampling} of elements from a finite set $\Delta$ is their selection   
from  this set at random, independently of each other and
under the uniform probability distribution on the set $\Delta$. 


The total degree of a multivariate monomial is the sum of its degrees
in all its variables. The total degree of a polynomial is the maximal total degree of 
its monomials.


\begin{lemma}\label{ledl} \cite{DL78}, \cite{S80a}, \cite{Z79}.
For a set $\Delta$ of a cardinality $|\Delta|$ in any fixed ring  
let a polynomial in $m$ variables have a total degree $d$ and let it not vanish 
identically on the set $\Delta^m$. Then the polynomial vanishes in at most 
$d|\Delta|^{m-1}$ points of this set. 
\end{lemma}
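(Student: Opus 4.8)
The plan is to prove this by induction on the number of variables $m$, following the classical argument behind the Schwartz--Zippel / DeMillo--Lipton bound. For the base case $m=1$ I would invoke the elementary fact that a nonzero univariate polynomial of degree at most $d$ has at most $d$ roots in an integral domain (in particular in $\mathbb{R}$ or $\mathbb{C}$, which are the only ground rings actually used elsewhere in the paper); since $d\,|\Delta|^{0}=d$, this is exactly the asserted bound.

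For the inductive step, assuming the claim for polynomials in $m-1$ variables, I would write the given polynomial $p(x_1,\dots,x_m)$ as a polynomial in the single variable $x_m$ with coefficients in the polynomial ring in $x_1,\dots,x_{m-1}$, say $p=\sum_{i=0}^{k}c_i(x_1,\dots,x_{m-1})\,x_m^i$ with $c_k\not\equiv 0$, so that $k$ is the degree of $p$ in $x_m$ and $c_k$ has total degree at most $d-k$. I would then split the points $(a_1,\dots,a_{m-1})\in\Delta^{m-1}$ according to whether $c_k$ vanishes there. By the inductive hypothesis the first set has at most $(d-k)\,|\Delta|^{m-2}$ points, and at each of them all $|\Delta|$ choices of $x_m$ might make $p$ vanish, contributing at most $(d-k)\,|\Delta|^{m-1}$ zeros of $p$ on $\Delta^m$. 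For each of the at most $|\Delta|^{m-1}$ points of the second set, the restriction $p(a_1,\dots,a_{m-1},x_m)$ is a nonzero univariate polynomial of degree exactly $k$ and hence has at most $k$ roots in $\Delta$, contributing at most $k\,|\Delta|^{m-1}$ further zeros. Adding the two counts gives $(d-k)\,|\Delta|^{m-1}+k\,|\Delta|^{m-1}=d\,|\Delta|^{m-1}$, which closes the induction; the degenerate cases ($k=d$, where $c_k$ is a nonzero constant with no zeros, and $k=0$, where $p$ does not involve $x_m$) are absorbed automatically by the same bookkeeping.

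I do not expect a real obstacle here: the statement is a standard lemma and the argument is short. The only point that deserves a word of care is the base case, which uses that a nonzero degree-$d$ univariate polynomial has at most $d$ roots — valid over any integral domain, and in particular over the fields $\mathbb{R}$ and $\mathbb{C}$ used throughout; over an arbitrary commutative ring one should read the hypothesis in that light. Everything else is the two-case count in the inductive step, which I would not spell out beyond what is sketched above.
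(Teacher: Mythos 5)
The paper states this lemma purely by citation to \cite{DL78}, \cite{S80a}, and \cite{Z79}; it does not give a proof of its own, so there is nothing to compare your argument against. Your induction is the standard Schwartz--Zippel--DeMillo--Lipton argument and is correct in substance. One small refinement is worth noting, though. The inductive statement you actually need to carry is ``a nonzero polynomial in $m-1$ variables of total degree at most $e$ vanishes at no more than $e\,|\Delta|^{m-2}$ points of $\Delta^{m-1}$,'' with \emph{nonzero as a formal polynomial} as the hypothesis, not the lemma's ``does not vanish identically on $\Delta^{m-1}$.'' You apply the inductive hypothesis to the leading coefficient $c_k$, which is nonzero as a polynomial because $k$ is the $x_m$-degree; but $c_k$ could in principle vanish at every point of $\Delta^{m-1}$, in which case the lemma's literal hypothesis fails for it. Proving the ``nonzero formal polynomial'' form of the claim (which implies the lemma, since a polynomial that does not vanish on the nonempty set $\Delta^m$ is in particular nonzero) sidesteps this entirely and is how the classical references phrase it. Your caveat about the ground ring is well placed: the count of roots of a nonzero univariate polynomial by its degree, and hence the whole lemma, requires an integral domain rather than ``any fixed ring'' as the paper loosely says; the paper only ever invokes the lemma over $\mathbb{R}$, so this is harmless there.
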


\begin{theorem}\label{thdl} 
Under the assumptions of Lemma \ref{ledl} let the values of the variables 
of the polynomial be randomly and uniformly sampled from a finite set $\Delta$. 
Then the polynomial vanishes with a probability at most $\frac{d}{|\Delta|}$. 
\end{theorem}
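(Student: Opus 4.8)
The statement is the classical Schwartz--Zippel--DeMillo--Lipton bound in its probabilistic formulation: it follows from Lemma~\ref{ledl} by a direct counting argument. First I would recall the setup: we have a polynomial $P$ in $m$ variables of total degree $d$, not identically zero on $\Delta^m$, and we sample the point $(x_1,\dots,x_m)$ uniformly at random from the set $\Delta^m$, so that each of the $|\Delta|^m$ points of $\Delta^m$ is equally likely to be chosen, with probability $1/|\Delta|^m$. The event that $P$ vanishes at the sampled point is the set $Z=\{(x_1,\dots,x_m)\in\Delta^m:\ P(x_1,\dots,x_m)=0\}$.

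The core step is simply to invoke Lemma~\ref{ledl}, which gives $|Z|\le d\,|\Delta|^{m-1}$. Since the sampling is uniform over $\Delta^m$, the probability in question is the ratio of favourable outcomes to total outcomes,
\[
\mathrm{Probability}(P(x_1,\dots,x_m)=0)=\frac{|Z|}{|\Delta|^m}\le\frac{d\,|\Delta|^{m-1}}{|\Delta|^m}=\frac{d}{|\Delta|},
\]
which is exactly the claimed estimate. So the proof is essentially two lines once Lemma~\ref{ledl} is in hand.

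\textbf{Where the work really sits.} There is no genuine obstacle at the level of Theorem~\ref{thdl} itself; the only subtlety is that the argument presupposes the hypotheses of the lemma are preserved, in particular that $P$ does not vanish identically on $\Delta^m$ --- if $|\Delta|\le d$ the bound $d/|\Delta|$ is vacuous (it exceeds $1$), and if $P$ \emph{were} identically zero on $\Delta^m$ the conclusion would be false, so one must keep that hypothesis explicit. The real mathematical content is in Lemma~\ref{ledl}, whose standard proof is an induction on the number of variables $m$: write $P$ as a polynomial in $x_m$ with coefficients in the remaining variables, let $k\le d$ be the degree of the highest nonvanishing coefficient $c(x_1,\dots,x_{m-1})$ (which has total degree at most $d-k$), apply the inductive hypothesis to $c$ on $\Delta^{m-1}$, and for each of the remaining choices of $(x_1,\dots,x_{m-1})$ bound by $k$ the number of roots in $x_m$ of a nonzero univariate polynomial of degree $k$ over an integral domain. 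Summing the two contributions gives $(d-k)|\Delta|^{m-1}+k|\Delta|^{m-1}=d|\Delta|^{m-1}$. Since the lemma is cited as known (from \cite{DL78}, \cite{S80a}, \cite{Z79}), I would not reproduce this induction in the paper and would simply derive Theorem~\ref{thdl} from it by the counting step above.
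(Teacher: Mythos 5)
Your proof is correct, and it is the natural (indeed essentially only) derivation: the paper states Theorem~\ref{thdl} without proof, treating it as the immediate probabilistic restatement of Lemma~\ref{ledl}, which is exactly your two-line counting argument $|Z|/|\Delta|^m \le d|\Delta|^{m-1}/|\Delta|^m = d/|\Delta|$. Your side remarks on the vacuity when $|\Delta|\le d$ and on the induction behind Lemma~\ref{ledl} are accurate but are not part of what Theorem~\ref{thdl} itself requires.
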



\begin{corollary}\label{codlstr} 
Let the entries of a general or Toeplitz  $m\times n$ 
matrix have been randomly and uniformly 
sampled from a finite set $\Delta$ of cardinality $|\Delta|$ (in any fixed ring). 
Let $l=\min\{m,n\}$.
Then (a) every $k\times k$ submatrix $M$ for $k\le l$ is nonsingular with a probability at 
least $1-\frac{k}{|\Delta|}$ and (b) is strongly nonsingular with a probability at least 
$1-\sum_{i=1}^k\frac{i}{|\Delta|}= 1-\frac{(k+1)k}{2|\Delta|}$.
\end{corollary}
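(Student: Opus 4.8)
The plan is to read off both parts from Theorem~\ref{thdl} by applying it to the appropriate determinant polynomials, using only the Leibniz expansion of a determinant and the subadditivity of probability.

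For part (a) in the general case I would fix any $k$ rows and $k$ columns and let $M=(m_{i,j})_{i,j=1}^{k}$ be the resulting submatrix; its $k^{2}$ entries form a subset of the i.i.d.\ entries of the input, hence are themselves i.i.d.\ uniform on $\Delta$. The determinant $\det M=\sum_{\pi\in S_{k}}\operatorname{sgn}(\pi)\prod_{i=1}^{k}m_{i,\pi(i)}$ is a polynomial of total degree $k$ in these variables (each Leibniz term is a product of $k$ of them), and it does not vanish identically on $\Delta^{k^{2}}$ (the squarefree monomial $\prod_{i}m_{i,i}$ occurs with coefficient $1$, so the identity matrix witnesses $\det M\neq 0$ in the usual case $0,1\in\Delta$). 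Theorem~\ref{thdl} with $d=k$ then gives $\det M=0$ with probability at most $k/|\Delta|$, that is, $M$ is nonsingular with probability at least $1-k/|\Delta|$. In the Toeplitz case I would instead take $M$ to be a \emph{contiguous} $k\times k$ block, so that $M=(t_{i-j})_{i,j=0}^{k-1}$ is again Toeplitz and its $2k-1$ entries form a subset of the $m+n-1$ i.i.d.\ parameters of the input. Then $\det M$ is still a polynomial of total degree $k$ in those parameters, and it is again not identically zero, since the identity permutation is the only $\pi$ with $i-\pi(i)=0$ for every $i$, so the monomial $t_{0}^{k}$ survives with coefficient $1$; Theorem~\ref{thdl} yields the same bound.

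For part (b), observe that a $k\times k$ submatrix $M$ of the relevant type (arbitrary in the general case, contiguous in the Toeplitz case) is strongly nonsingular precisely when all of its leading principal blocks $M^{(1)},\dots,M^{(k)}$ are nonsingular, and each $M^{(i)}$ is itself a submatrix of the same type as $M$. By part (a), $\det M^{(i)}=0$ with probability at most $i/|\Delta|$, so by the union bound the probability that $M$ fails to be strongly nonsingular is at most $\sum_{i=1}^{k}i/|\Delta|=\frac{(k+1)k}{2|\Delta|}$, which is the claimed estimate. The one point that needs care is the restriction to contiguous (in particular, leading) blocks in the Toeplitz case: for an arbitrary row/column selection the resulting matrix need not be Toeplitz, and its determinant, regarded as a polynomial in the $m+n-1$ sampled parameters, could in principle vanish identically, so Theorem~\ref{thdl} would not apply; for the leading blocks actually used in part (b) this difficulty does not arise, and the substantive observation is simply that the Leibniz expansion gives the determinant total degree $k$, not the naive $k^{2}$, which is what keeps the failure probabilities linear in $k$.
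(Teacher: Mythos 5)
Your argument matches the paper's own proof of this corollary: both reduce (a) to Theorem~\ref{thdl} by observing that a $k\times k$ determinant has total degree $k$ as a polynomial in the sampled entries (or, in the Toeplitz case, in the $m+n-1$ sampled parameters), and both obtain (b) by the union bound over the leading blocks. Your hesitation about non-contiguous Toeplitz submatrices is a reasonable worry — the paper waves it away with ``clearly the claims hold for generic matrices'' — but it can in fact be discharged: for rows $r_1<\cdots<r_k$ and columns $c_1<\cdots<c_k$, specialize the parameters to $t_d=\sum_{l=1}^{k}\gamma_l a_l^{\,d}$ with distinct positive $a_l$ and nonzero $\gamma_l$, which writes the submatrix as $U\diag(\gamma_l)V^T$ with $U=(a_l^{\,r_i})_{i,l}$ and $V=(a_l^{\,-c_j})_{j,l}$ generalized Vandermonde matrices of nonzero determinant; hence the minor is not the zero polynomial and Theorem~\ref{thdl} applies to every $k\times k$ Toeplitz submatrix, not just the contiguous ones. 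Your restriction therefore is unnecessary for the full claim in (a), though it does already cover everything needed for (b). One small imprecision to note: the identity-matrix witness assumes $0,1\in\Delta$; it is cleaner to observe that the identity-permutation monomial shows $\det M$ is a nonzero polynomial of degree $k$, and then Lemma~\ref{ledl} itself guarantees it cannot vanish on all of $\Delta^{k^2}$ once $|\Delta|>k$ (the stated probability bound being vacuous when $|\Delta|\le k$).
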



\begin{proof}
Clearly the claims of the corollary  hold for generic matrices. 
Now note that the singularity of a $k\times k$ matrix means that its determinant vanishes,
but the determinant is a polynomial of total degree $k$ in the entries. Therefore
Theorem \ref{thdl} implies
parts (a) and consequently (b). Part (c) follows because a fixed entry of the inverse vanishes
if and only if the respective entry of the adjoint vanishes, but up to the sign the latter 
entry is the determinant of a $(k-1)\times (k-1)$ submatrix of the input matrix $M$, and so it is
a polynomial of degree $k-1$ in its entries. 
\end{proof}


\section{Perturbation errors of matrix inversion}\label{sosvdi0}


\begin{theorem}\label{thpert}  \cite[Corollary 1.4.19]{S98}.
Assume a pair of square matrices $A$ (nonsingular) and $E$ such that 
$||A^{-1}E||< 1$. Then $||(A+E)^{-1}||\le\frac{||A^{-1}||}{1-||A^{-1}E||}$ and 
 $\frac{||(A+E)^{-1}-A^{-1}||}{||A^{-1}||}\le \frac{||A^{-1}||}{1-||A^{-1}E||}$.
\end{theorem}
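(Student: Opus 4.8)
The plan is to reduce everything to the Neumann series for the perturbation factor $I+A^{-1}E$. First I would write $A+E=A(I+A^{-1}E)$, which is legitimate because $A$ is nonsingular. Since $||A^{-1}E||<1$ and the $n\times n$ matrices form a complete normed algebra under the submultiplicative spectral norm with $||I||=1$, the series $\sum_{k\ge 0}(-A^{-1}E)^k$ converges; denoting its sum by $M$, one checks directly that $M(I+A^{-1}E)=(I+A^{-1}E)M=I$, so $I+A^{-1}E$ is invertible with inverse $M$ and
$$||(I+A^{-1}E)^{-1}||=||M||\le \sum_{k\ge 0}||A^{-1}E||^{k}=\frac{1}{1-||A^{-1}E||}.$$

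Next I would conclude that $A+E$ is invertible with $(A+E)^{-1}=(I+A^{-1}E)^{-1}A^{-1}$, whence by submultiplicativity
$$||(A+E)^{-1}||\le ||(I+A^{-1}E)^{-1}||~||A^{-1}||\le \frac{||A^{-1}||}{1-||A^{-1}E||},$$
which is the first claim. For the second claim I would use the resolvent-type identity $(A+E)^{-1}-A^{-1}=-(A+E)^{-1}EA^{-1}$, obtained by left-multiplying $A-(A+E)=-E$ by $(A+E)^{-1}$ and right-multiplying by $A^{-1}$. Substituting $(A+E)^{-1}=(I+A^{-1}E)^{-1}A^{-1}$ gives $(A+E)^{-1}-A^{-1}=-(I+A^{-1}E)^{-1}(A^{-1}E)A^{-1}$, so that
$$\frac{||(A+E)^{-1}-A^{-1}||}{||A^{-1}||}\le ||(I+A^{-1}E)^{-1}||~||A^{-1}E||\le \frac{||A^{-1}E||}{1-||A^{-1}E||}.$$

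There is essentially no hard step: the only points deserving a word of care are the convergence of the Neumann series (which rests on completeness of the matrix algebra together with $||A^{-1}E||<1$) and the repeated use of submultiplicativity of the spectral norm. Since the statement is quoted verbatim from \cite[Corollary 1.4.19]{S98}, one may alternatively just invoke that reference; the sketch above merely records the elementary derivation. I note in passing that the displayed relative bound should have $||A^{-1}E||$ in place of $||A^{-1}||$ in its numerator — the argument above produces $||A^{-1}E||/(1-||A^{-1}E||)$, which is scale-invariant under $A\mapsto cA$, $E\mapsto cE$ as a relative error bound ought to be, and is the form actually used in the sequel (e.g.\ via Corollary \ref{copert}).
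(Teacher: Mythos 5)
Your Neumann-series argument is correct and is the standard textbook derivation; the paper itself supplies no proof for Theorem \ref{thpert} but simply cites \cite[Corollary 1.4.19]{S98}, and the argument you give is precisely the one found there. All steps are sound: $A+E=A(I+A^{-1}E)$, convergence of $\sum_{k\ge0}(-A^{-1}E)^k$ under $\|A^{-1}E\|<1$, submultiplicativity, and the resolvent identity $(A+E)^{-1}-A^{-1}=-(A+E)^{-1}EA^{-1}$.

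Your observation about the typo is also correct and worth flagging: as printed, the second inequality has $\|A^{-1}\|$ rather than $\|A^{-1}E\|$ in the numerator of the right-hand side, which is dimensionally inconsistent (a pure ratio bounded by a quantity with the units of $\|A^{-1}\|$) and does not follow from the argument. The bound that the derivation actually yields, and which Stewart states, is
\[
\frac{\|(A+E)^{-1}-A^{-1}\|}{\|A^{-1}\|}\le \frac{\|A^{-1}E\|}{1-\|A^{-1}E\|},
\]
and this is the form consistent with the scale invariance $A\mapsto cA$, $E\mapsto cE$ that a relative perturbation bound should have.
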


\begin{theorem}\label{thpert1}  \cite[Theorem 5.1]{S95}.
Assume a pair of
 $m\times n$ matrices $A$ and $A+E$, 
and let the norm $||E||$ be small. Then 
$||Q(A+E)-Q(A)||\le \sqrt 2 ||A^+||~||E||_F+O(||E||_F^2)$.
\end{theorem}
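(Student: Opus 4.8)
The plan is to run a short first-order perturbation analysis of the QR factorization; the bound itself is \cite[Theorem 5.1]{S95}, but it admits a self-contained derivation. Since $Q(A)$ is only defined when $A$ has full column rank $n$, I would first note that this is implicit in the hypothesis and that $\|E\|$ small then also forces $\rank(A+E)=n$, so that $A+E=\widetilde Q\widetilde R$ (with $\widetilde R$ upper triangular and positive on its diagonal) is well defined. Writing $A=QR$, $\Delta Q=\widetilde Q-Q$, $\Delta R=\widetilde R-R$, the opening step is to record, via a continuity/implicit-function argument (the QR map is analytic wherever $R$ has positive diagonal), that the diagonal-sign normalization survives small perturbations and that $\|\Delta Q\|_F=O(\|E\|_F)$ and $\|\Delta R\|_F=O(\|E\|_F)$. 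From there I would work systematically modulo $O(\|E\|_F^2)$.

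Next I would expand $A+E=(Q+\Delta Q)(R+\Delta R)$ into the perturbation identity $E=Q\,\Delta R+\Delta Q\,R+\Delta Q\,\Delta R$, drop the quadratic term, and right-multiply by $R^{-1}$, obtaining $ER^{-1}=Q\,\Delta R\,R^{-1}+\Delta Q+O(\|E\|_F^2)$. Completing $Q$ to a square orthogonal $[\,Q\mid Q_\perp\,]$ and setting $M=Q^TER^{-1}$, projecting onto the column space of $Q$ and onto its orthogonal complement gives $M=\Delta R\,R^{-1}+Q^T\Delta Q+O(\|E\|_F^2)$ and $Q_\perp^T\Delta Q=Q_\perp^TER^{-1}+O(\|E\|_F^2)$. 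Differentiating $\widetilde Q^T\widetilde Q=I_n$ shows $Q^T\Delta Q$ is skew-symmetric up to $O(\|E\|_F^2)$, while $\Delta R\,R^{-1}$ is upper triangular; hence, modulo $O(\|E\|_F^2)$, $M$ splits uniquely into an upper-triangular part plus a skew-symmetric part, and that skew part, namely $Q^T\Delta Q$, has its strictly-lower-triangular entries equal to those of $M$.

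The heart of the estimate is then a counting argument: if $L,V,D$ denote the squared Frobenius norms of the strictly-lower, strictly-upper, and diagonal parts of $M$, then $\|Q^T\Delta Q\|_F^2=2L+O(\|E\|_F^3)\le 2(L+V+D)+O(\|E\|_F^3)=2\|M\|_F^2+O(\|E\|_F^3)$. Adding $\|Q_\perp^T\Delta Q\|_F^2=\|Q_\perp^TER^{-1}\|_F^2+O(\|E\|_F^3)$ and using that $[\,Q\mid Q_\perp\,]^T$ preserves the Frobenius norm, I would get
\[
\|\Delta Q\|_F^2=\|Q^T\Delta Q\|_F^2+\|Q_\perp^T\Delta Q\|_F^2\le 2\|ER^{-1}\|_F^2+O(\|E\|_F^3),
\]
which is at most $2\|R^{-1}\|^2\|E\|_F^2+O(\|E\|_F^3)$. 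Finally, $A^+=R^{-1}Q^T$ for $A=QR$ of full column rank, so $\|R^{-1}\|=\|A^+\|$; taking square roots yields $\|\Delta Q\|\le\|\Delta Q\|_F\le\sqrt2\,\|A^+\|\,\|E\|_F+O(\|E\|_F^2)$, as claimed.

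The hard part will be the very first step: one must legitimately establish that $\Delta Q$ and $\Delta R$ are genuinely first order in $\|E\|_F$, so that the discarded term $\Delta Q\,\Delta R$ and the second-order part of the constraint $\widetilde Q^T\widetilde Q=I$ are indeed absorbed into $O(\|E\|_F^2)$, and that $\widetilde R$ keeps a positive diagonal for small $E$. Everything downstream is elementary linear algebra on the upper-triangular-plus-skew decomposition. If one prefers to avoid this differentiability bookkeeping, the cited result \cite[Theorem 5.1]{S95} supplies the conclusion directly, and that is the route the paper takes.
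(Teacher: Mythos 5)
Your derivation is correct, and it reproduces the standard first-order perturbation analysis of the thin QR factorization; the paper itself gives no proof (it cites \cite[Theorem 5.1]{S95} and moves on), so what you supply is genuinely more than the text offers. The chain of estimates checks out: the splitting of $M=Q^TER^{-1}$ into an upper-triangular piece $\Delta R\,R^{-1}$ and a skew piece $Q^T\Delta Q$ (both modulo $O(\|E\|_F^2)$) is unique, the strictly-lower entries of $M$ determine the skew part, and the inequality $2L+\|Q_\perp^TER^{-1}\|_F^2\le 2L+2V+2D+2\|Q_\perp^TER^{-1}\|_F^2=2\|ER^{-1}\|_F^2$ delivers exactly the $\sqrt2$ constant of Sun's theorem even though the intermediate steps discard the $V$, $D$, and half of the $Q_\perp$-block contributions. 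The remaining technical burden you flag — that the QR map is smooth at full-rank arguments with positive-diagonal normalization, so $\Delta Q,\Delta R$ are genuinely $O(\|E\|_F)$ and the discarded cross terms are $O(\|E\|_F^2)$ — is real but routine (apply the implicit function theorem to $F(Q,R;A)=(A-QR,\ Q^TQ-I,\ \text{strict lower part of }R)$, or simply invoke the Cholesky factor of $A^TA$ being smooth in $A$), and with that in place the argument is complete. The only thing I would add for the record is an explicit statement that the hypothesis tacitly assumes $\rank A=n$, which you already note; otherwise the quantities $Q(A)$ and $\|A^+\|=\|R^{-1}\|$ are not defined.
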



$P_A$  denotes the {\em orthogonal projector} on the range
of a matrix $A$ having full column rank,

\begin{equation}\label{eqopr}
P_A=A(A^TA)^{-1}A^T=AA^+=QQ^T~{\rm for}~Q=Q(A).
\end{equation}


\begin{corollary}\label{copert}  
Suppose $m\times n$ matrices $A$ and $A+E$ have full rank.
Then  

$||P_{A+E}-P_A||\le 2 ||Q(A+E)-Q(A)||\le 2 \sqrt 2~||A^+||~||E||_F+O(||E||_F^2).$
\end{corollary}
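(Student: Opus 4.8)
The final statement is Corollary \ref{copert}, which bounds $\|P_{A+E}-P_A\|$ in terms of $\|Q(A+E)-Q(A)\|$ and then in terms of $\|A^+\|\,\|E\|_F$.

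The plan is to reduce everything to Theorem \ref{thpert1}, which already controls $\|Q(A+E)-Q(A)\|$, so the only genuinely new content is the first inequality $\|P_{A+E}-P_A\| \le 2\|Q(A+E)-Q(A)\|$. First I would write $P_A = Q(A)Q(A)^T$ and $P_{A+E} = Q(A+E)Q(A+E)^T$ using the identity (\ref{eqopr}), abbreviating $Q = Q(A)$ and $\widetilde Q = Q(A+E)$, both of which have orthonormal columns since $A$ and $A+E$ have full column rank. Then I would use the telescoping trick $\widetilde Q\widetilde Q^T - QQ^T = \widetilde Q\widetilde Q^T - \widetilde Q Q^T + \widetilde Q Q^T - QQ^T = \widetilde Q(\widetilde Q - Q)^T + (\widetilde Q - Q)Q^T$. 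Taking spectral norms and applying the triangle inequality together with $\|\widetilde Q\| = \|Q\| = 1$ (orthonormal columns) and $\|M^T\| = \|M\|$ gives $\|P_{A+E} - P_A\| \le \|\widetilde Q - Q\| + \|\widetilde Q - Q\| = 2\|Q(A+E)-Q(A)\|$, which is the first claimed bound.

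For the second inequality I would simply invoke Theorem \ref{thpert1}, which states $\|Q(A+E)-Q(A)\| \le \sqrt 2\,\|A^+\|\,\|E\|_F + O(\|E\|_F^2)$ for small $\|E\|$; multiplying through by $2$ and chaining with the first inequality yields $\|P_{A+E}-P_A\| \le 2\sqrt 2\,\|A^+\|\,\|E\|_F + O(\|E\|_F^2)$, exactly as stated. One small point to note is that Theorem \ref{thpert1} is stated for $m\times n$ matrices under the hypothesis that $\|E\|$ is small, and the full-rank assumption on both $A$ and $A+E$ in the corollary is what guarantees $Q(A)$ and $Q(A+E)$ are well-defined with the normalization conventions of the paper; I would mention this compatibility explicitly but not belabor it.

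The main obstacle, such as it is, is purely in getting the constant $2$ right in the first inequality rather than a worse constant — the telescoping decomposition above is the clean way to do it, and one must be careful to split symmetrically ($\widetilde Q Q^T$ as the intermediate term) so that each of the two pieces contributes exactly $\|\widetilde Q - Q\|$ after using that the \emph{other} factor has unit norm. There is no deeper difficulty; this is a short two-step argument once the projector formula (\ref{eqopr}) and Theorem \ref{thpert1} are in hand.
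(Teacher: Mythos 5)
Your proof is correct and takes essentially the same approach as the paper: a telescoping decomposition of $\widetilde Q\widetilde Q^T - QQ^T$, the triangle inequality with $\|Q\|=\|\widetilde Q\|=1$, and then Theorem \ref{thpert1}. The only cosmetic difference is that you telescope through the intermediate term $\widetilde Q Q^T$ while the paper telescopes through $Q\widetilde Q^T$; both give the constant $2$ immediately.
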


\begin{proof}
Clearly 
$P_{A+E}-P_A=Q(A+E)Q(A+E)^T-Q(A)Q(A)^T=$
\begin{eqnarray*}
(Q(A+E)-Q(A))Q(A+E)^T+Q(A)(Q(A+E)^T-Q(A)^T).
\end{eqnarray*}
Consequently

$||P_{A+E}-P_A||\le 
||Q(A+E)-Q(A)||~||Q(A+E)^T||+||Q(A)||~||Q(A+E)^T-Q(A)^T||$.

Substitute $||Q(A)||=||Q(A+E)^T||=1$ and $||Q(A+E)^T-Q(A)^T||=||Q(A+E)-Q(A)||$
and obtain that $||P_{A+E}-P_A||\le 2 ||Q(A+E)-Q(A)||$.
Substitute the bound of Theorem \ref{thpert1}.
\end{proof}


\section{Tables}\label{stab}


\begin{table}[ht]
  \caption{The norms $||A||^{-1}$ of the input matrices $A$}
  \label{tab61}
  \begin{center}
    \begin{tabular}{| c | c | c | c | c | c |}
      \hline
      \bf{dimension} &  \bf{mean} & \bf{max} & \bf{min} & \bf{std} \\ \hline
 $64$ &  $6.95\times 10^2$ & $2.41\times 10^5$ & $2.18\times 10^1$ & $7.87\times 10^3$ \\ \hline
 $128$ &  $1.00\times 10^3$ & $1.05\times 10^5$ & $3.78\times 10^1$ & $5.81\times 10^3$ \\ \hline
 $256$ &  $1.51\times 10^3$ & $8.90\times 10^4$ & $7.68\times 10^1$ & $6.06\times 10^3$ \\ \hline
 $512$ &  $2.78\times 10^3$ & $1.35\times 10^5$ & $1.74\times 10^2$ & $8.64\times 10^3$ \\ \hline
 $1024$ & $9.54\times 10^3$ & $3.79\times 10^6$ & $3.13\times 10^2$ & $1.21\times 10^5$ \\ \hline

    \end{tabular}
  \end{center}
\end{table}
   


\begin{table}[ht]
  \caption{Relative residual norms  of GEPP}
  \label{tab62}
  \begin{center}
    \begin{tabular}{| c | c | c | c | c | c |}
      \hline
      \bf{dimension}  & \bf{mean} & \bf{max} & \bf{min} & \bf{std} \\ \hline
 $64$ &  $4.91\times 10^{-14}$ & $2.06\times 10^{-11}$ & $1.75\times 10^{-15}$ & $6.64\times 10^{-13}$ \\ \hline
 $128$ &  $6.86\times 10^{-14}$ & $7.58\times 10^{-12}$ & $3.97\times 10^{-15}$ & $3.02\times 10^{-13}$ \\ \hline
 $256$  & $2.00\times 10^{-13}$ & $1.95\times 10^{-11}$ & $1.05\times 10^{-14}$ & $8.93\times 10^{-13}$ \\ \hline
 $512$ &  $6.08\times 10^{-13}$ & $5.76\times 10^{-11}$ & $3.55\times 10^{-14}$ & $2.65\times 10^{-12}$ \\ \hline
 $1024$  & $2.67\times 10^{-12}$ & $8.02\times 10^{-10}$ & $1.13\times 10^{-13}$ & $2.65\times 10^{-11}$ \\ \hline
  \end{tabular}
  \end{center}
\end{table}



\begin{table}[ht]
  \caption{Relative residual norms:
 GENP with Gaussian multipliers}
  \label{tab63}
  \begin{center}
    \begin{tabular}{| c | c | c | c | c | c | c |}
      \hline
      \bf{dimension} & \bf{iterations} & \bf{mean} & \bf{max} & \bf{min} & \bf{std} \\ \hline
 $64$ & $0$ & $1.66\times 10^{-9}$ & $1.47\times 10^{-6}$ & $4.47\times 10^{-14}$ & $4.67\times 10^{-8}$ \\ \hline
 $64$ & $1$ & $1.63\times 10^{-14}$ & $5.71\times 10^{-12}$ & $5.57\times 10^{-16}$ & $1.91\times 10^{-13}$ \\ \hline
 $128$ & $0$ & $6.62\times 10^{-10}$ & $2.61\times 10^{-7}$ & $3.98\times 10^{-13}$ & $8.66\times 10^{-9}$ \\ \hline
 $128$ & $1$ & $1.57\times 10^{-14}$ & $2.31\times 10^{-12}$ & $9.49\times 10^{-16}$ & $8.23\times 10^{-14}$ \\ \hline
 $256$ & $0$ & $6.13\times 10^{-9}$ & $3.39\times 10^{-6}$ & $2.47\times 10^{-12}$ & $1.15\times 10^{-7}$ \\ \hline
 $256$ & $1$ & $3.64\times 10^{-14}$ & $4.32\times 10^{-12}$ & $1.91\times 10^{-15}$ & $2.17\times 10^{-13}$ \\ \hline
 $512$ & $0$ & $5.57\times 10^{-8}$ & $1.44\times 10^{-5}$ & $1.29\times 10^{-11}$ & $7.59\times 10^{-7}$ \\ \hline
 $512$ & $1$ & $7.36\times 10^{-13}$ & $1.92\times 10^{-10}$ & $3.32\times 10^{-15}$ & $1.07\times 10^{-11}$ \\ \hline
 $1024$ & $0$ & $2.58\times 10^{-7}$ & $2.17\times 10^{-4}$ & $4.66\times 10^{-11}$ & $6.86\times 10^{-6}$ \\ \hline
 $1024$ & $1$ & $7.53\times 10^{-12}$ & $7.31\times 10^{-9}$ & $6.75\times 10^{-15}$ & $2.31\times 10^{-10}$ \\ \hline
 
    \end{tabular}
  \end{center}
\end{table}



\begin{table}[ht]
  \caption{Relative residual norms:
 GENP with real circulant
 Gaussian multipliers of Example \ref{exc1}}
  \label{tab64}
  \begin{center}
    \begin{tabular}{| c | c | c | c | c | c | c |}
      \hline
      \bf{dimension} & \bf{iterations} & \bf{mean} & \bf{max} & \bf{min} & \bf{std} \\ \hline
 $64$ & $0$ & $1.15\times 10^{-11}$ & $3.39\times 10^{-9}$ & $2.15\times 10^{-14}$ & $1.18\times 10^{-10}$ \\ \hline
 $64$ & $1$ & $1.73\times 10^{-14}$ & $8.18\times 10^{-12}$ & $5.95\times 10^{-16}$ & $2.62\times 10^{-13}$ \\ \hline
 $128$ & $0$ & $1.06\times 10^{-10}$ & $6.71\times 10^{-8}$ & $1.73\times 10^{-13}$ & $2.15\times 10^{-9}$ \\ \hline
 $128$ & $1$ & $1.56\times 10^{-14}$ & $2.20\times 10^{-12}$ & $8.96\times 10^{-16}$ & $7.91\times 10^{-14}$ \\ \hline
 $256$ & $0$ & $8.97\times 10^{-11}$ & $1.19\times 10^{-8}$ & $6.23\times 10^{-13}$ & $4.85\times 10^{-10}$ \\ \hline
 $256$ & $1$ & $2.88\times 10^{-14}$ & $2.89\times 10^{-12}$ & $1.89\times 10^{-15}$ & $1.32\times 10^{-13}$ \\ \hline
 $512$ & $0$ & $4.12\times 10^{-10}$ & $3.85\times 10^{-8}$ & $2.37\times 10^{-12}$ & $2.27\times 10^{-9}$ \\ \hline
 $512$ & $1$ & $5.24\times 10^{-14}$ & $5.12\times 10^{-12}$ & $2.95\times 10^{-15}$ & $2.32\times 10^{-13}$ \\ \hline
 $1024$ & $0$ & $1.03\times 10^{-8}$ & $5.80\times 10^{-6}$ & $1.09\times 10^{-11}$ & $1.93\times 10^{-7}$ \\ \hline
 $1024$ & $1$ & $1.46\times 10^{-13}$ & $4.80\times 10^{-11}$ & $6.94\times 10^{-15}$ & $1.60\times 10^{-12}$ \\ \hline    
 \end{tabular}
  \end{center}
\end{table}



\begin{table}[ht]
  \caption{Relative residual norms:
 GENP with unitary circulant
multipliers of Example \ref{exunc}}
  \label{tab65}
  \begin{center}
    \begin{tabular}{| c | c | c | c | c | c | c |}
      \hline
      \bf{dimension} & \bf{iterations} & \bf{mean} & \bf{max} & \bf{min} & \bf{std} \\ \hline
 $64$ & $0$ & $3.59\times 10^{-13}$ & $1.19\times 10^{-10}$ & $6.14\times 10^{-15}$ & $3.95\times 10^{-12}$ \\ \hline
 $64$ & $1$ & $1.53\times 10^{-14}$ & $6.69\times 10^{-12}$ & $5.74\times 10^{-16}$ & $2.14\times 10^{-13}$ \\ \hline
 $128$ & $0$ & $6.54\times 10^{-13}$ & $6.64\times 10^{-11}$ & $2.68\times 10^{-14}$ & $2.67\times 10^{-12}$ \\ \hline
 $128$ & $1$ & $1.53\times 10^{-14}$ & $2.04\times 10^{-12}$ & $9.31\times 10^{-16}$ & $7.45\times 10^{-14}$ \\ \hline
 $256$ & $0$ & $2.37\times 10^{-12}$ & $2.47\times 10^{-10}$ & $9.41\times 10^{-14}$ & $1.06\times 10^{-11}$ \\ \hline
 $256$ & $1$ & $2.88\times 10^{-14}$ & $3.18\times 10^{-12}$ & $1.83\times 10^{-15}$ & $1.36\times 10^{-13}$ \\ \hline
 $512$ & $0$ & $7.42\times 10^{-12}$ & $6.77\times 10^{-10}$ & $3.35\times 10^{-13}$ & $3.04\times 10^{-11}$ \\ \hline
 $512$ & $1$ & $5.22\times 10^{-14}$ & $4.97\times 10^{-12}$ & $3.19\times 10^{-15}$ & $2.29\times 10^{-13}$ \\ \hline
 $1024$ & $0$ & $4.43\times 10^{-11}$ & $1.31\times 10^{-8}$ & $1.28\times 10^{-12}$ & $4.36\times 10^{-10}$ \\ \hline
 $1024$ & $1$ & $1.37\times 10^{-13}$ & $4.33\times 10^{-11}$ & $6.67\times 10^{-15}$ & $1.41\times 10^{-12}$ \\ \hline
   \end{tabular}
  \end{center}
\end{table}

\begin{table}[ht]
  \caption{Relative residual norms:
 GENP with Gaussian multipliers and iterative refinement}
  \label{tab65a}
  \begin{center}
  \begin{tabular}{| c |c|  c | c |  c |c|}
      \hline
  \bf{dimension} & \bf{iterations} & \bf{mean} & \bf{max} & \bf{min} & \bf{std} \\ \hline
  
 $64$ & $0$ & $3.41\times 10^{-13}$ & $1.84\times 10^{-11}$ & $1.73\times 10^{-14}$ & $1.84\times 10^{-12}$ \\ \hline
 $64$ & $1$ & $5.10\times 10^{-16}$ & $8.30\times 10^{-16}$ & $4.02\times 10^{-16}$ & $6.86\times 10^{-17}$ \\ \hline
 $128$ & $0$ & $5.48\times 10^{-13}$ & $7.21\times 10^{-12}$ & $6.02\times 10^{-14}$ & $9.05\times 10^{-13}$ \\ \hline
 $128$ & $1$ & $7.41\times 10^{-16}$ & $9.62\times 10^{-16}$ & $6.11\times 10^{-16}$ & $6.82\times 10^{-17}$ \\ \hline
 $256$ & $0$ & $2.26\times 10^{-12}$ & $4.23\times 10^{-11}$ & $2.83\times 10^{-13}$ & $4.92\times 10^{-12}$ \\ \hline
 $256$ & $1$ & $1.05\times 10^{-15}$ & $1.26\times 10^{-15}$ & $9.14\times 10^{-16}$ & $6.76\times 10^{-17}$ \\ \hline
 $512$ & $0$ & $1.11\times 10^{-11}$ & $6.23\times 10^{-10}$ & $6.72\times 10^{-13}$ & $6.22\times 10^{-11}$ \\ \hline
 $512$ & $1$ & $1.50\times 10^{-15}$ & $1.69\times 10^{-15}$ & $1.33\times 10^{-15}$ & $6.82\times 10^{-17}$ \\ \hline
 $1024$ & $0$ & $7.57\times 10^{-10}$ & $7.25\times 10^{-8}$ & $1.89\times 10^{-12}$ & $7.25\times 10^{-9}$ \\ \hline
 $1024$ & $1$ & $2.13\times 10^{-15}$ & $2.29\times 10^{-15}$ & $1.96\times 10^{-15}$ & $7.15\times 10^{-17}$ \\ \hline
    \end{tabular}
  \end{center}
\end{table}
\clearpage
%

\begin{table}[ht] 
  \caption{Residual norms rn$^{(1)}$ and the mean ratios of them and their upper bounds  $\tilde \delta_+ \ $,  
in the case of using Gaussian multipliers}
\label{tab66}
  \begin{center}
    \begin{tabular}{| c |  c | c |  c |c|}
      \hline
$q$   & n & \bf{mean} & \bf{max} & {\bf mean of ratio} {\rm rn}$^{(1)} / \tilde \Delta_+$  \\ \hline	
8 & 64 & $ 1.31\times 10^{-7}  $  &  $ 3.00\times 10^{-5} $ &   $ 1.48\times 10^{-1} $ \\ \hline		
8 & 128 & $ 1.88\times 10^{-7} $  &  $ 5.75\times 10^{-5} $ &   $ 1.52\times 10^{-1} $ \\ \hline		
8 & 256 & $ 3.84\times 10^{-7} $  &  $ 8.09\times 10^{-5} $  &  $ 1.54\times 10^{-1} $   \\ \hline		
8 & 512 &  $ 2.18\times 10^{-7} $  &  $ 2.13\times 10^{-5} $  &  $ 1.57\times 10^{-1} $   \\ \hline		
8 & 1024 &  $ 5.47\times 10^{-7} $  &  $ 2.25\times 10^{-4} $  &  $ 1.58\times 10^{-1} $   \\ \hline		
32 & 64 &  $ 5.00\times 10^{-7} $  &  $ 4.05\times 10^{-5} $  &  $ 5.23\times 10^{-2} $    \\ \hline		
32 & 128 & $ 1.98\times 10^{-6} $  &  $ 1.08\times 10^{-3} $  &  $ 6.44\times 10^{-2} $   \\ \hline		
32 & 256 & $ 1.04\times 10^{-6} $  &  $ 8.03\times 10^{-5} $  &  $ 6.90\times 10^{-2} $    \\ \hline
32 & 512 & $ 3.27\times 10^{-6} $  &  $ 1.00\times 10^{-3} $  &  $ 7.11\times 10^{-2} $   \\ \hline		
32 & 1024 & $ 3.46\times 10^{-6} $  &  $ 6.92\times 10^{-4} $  &  $ 7.30\times 10^{-2} $    \\ \hline		
    \end{tabular}
  \end{center}
\end{table}

\begin{table}[ht] 
  \caption{Residual norms rn$^{(1)}$ and the mean ratios of them and their upper bounds  $\tilde \delta_+ \ $,
 in the case of  using Toeplitz random  multipliers and Example \ref{exc1}}
\label{tab610}
  \begin{center}
    \begin{tabular}{| c | c | c | c | c | c |c|}
      \hline
$q$  &  n & \bf{mean} & \bf{max} & {\bf mean of ratio} {\rm rn}$^{(1)} / \tilde \Delta_+$ \\ \hline
8 & 64 & $ 9.70\times 10^{-8}  $  &  $ 2.01\times 10^{-5} $ &   $ 1.50\times 10^{-1} $ \\ \hline		
8 & 128 & $ 9.48\times 10^{-8} $  &  $ 6.03\times 10^{-6} $ &   $ 1.54\times 10^{-1} $ \\ \hline		
8 & 256 & $ 1.58\times 10^{-7} $  &  $ 1.17\times 10^{-5} $  &  $ 1.57\times 10^{-1} $   \\ \hline		
8 & 512 &  $ 2.77\times 10^{-7} $  &  $ 6.04\times 10^{-5} $  &  $ 1.57\times 10^{-1} $   \\ \hline		
8 & 1024 &  $ 4.97\times 10^{-7} $  &  $ 5.83\times 10^{-5} $  &  $ 1.58\times 10^{-1} $   \\ \hline		
32 & 64 &  $ 4.99\times 10^{-7} $  &  $ 5.01\times 10^{-5} $  &  $ 5.73\times 10^{-2} $    \\ \hline		
32 & 128 & $ 5.61\times 10^{-7} $  &  $ 2.43\times 10^{-5} $  &  $ 6.54\times 10^{-2} $   \\ \hline		
32 & 256 & $ 2.19\times 10^{-6} $  &  $ 7.11\times 10^{-4} $  &  $ 6.98\times 10^{-2} $    \\ \hline
32 & 512 & $ 2.53\times 10^{-6} $  &  $ 6.62\times 10^{-4} $  &  $ 7.20\times 10^{-2} $   \\ \hline		
32 & 1024 & $ 2.17\times 10^{-6} $  &  $ 3.15\times 10^{-4} $  &  $ 7.25\times 10^{-2} $    \\ \hline	 
		\end{tabular}
  \end{center}
\end{table}

\begin{table}[ht] 
  \caption{Residual norms 
rn$^{(1)}$  and the mean ratios of them and their upper bounds  $\tilde \delta_+ \ $,
in the case of  using Toeplitz random  multipliers and Example \ref{exunc}}
\label{tab613}
  \begin{center}
    \begin{tabular}{| c | c | c | c | c | c |c|}      \hline
$q$  &  n & \bf{mean} & \bf{max} & {\bf mean of ratio} {\rm rn}$^{(1)} / \tilde \Delta_+$  \\ \hline
8 & 64 & $ 1.94\times 10^{-8}  $  &  $ 3.30\times 10^{-7} $ &   $ 1.59\times 10^{-1} $ \\ \hline		
8 & 128 & $ 3.03\times 10^{-8} $  &  $ 1.97\times 10^{-6} $ &   $ 1.59\times 10^{-1} $ \\ \hline		
8 & 256 & $ 3.85\times 10^{-8} $  &  $ 1.00\times 10^{-6} $  &  $ 1.59\times 10^{-1} $   \\ \hline		
8 & 512 &  $ 5.47\times 10^{-8} $  &  $ 1.18\times 10^{-6} $  &  $ 1.59\times 10^{-1} $   \\ \hline		
8 & 1024 &  $ 8.51\times 10^{-8} $  &  $ 2.12\times 10^{-6} $  &  $ 1.59\times 10^{-1} $   \\ \hline		
32 & 64 &  $ 1.03\times 10^{-7} $  &  $ 2.84\times 10^{-6} $  &  $ 7.37\times 10^{-2} $    \\ \hline		
32 & 128 & $ 1.87\times 10^{-7} $  &  $ 2.44\times 10^{-6} $  &  $ 7.39\times 10^{-2} $   \\ \hline		
32 & 256 & $ 2.86\times 10^{-7} $  &  $ 6.43\times 10^{-6} $  &  $ 7.39\times 10^{-2} $    \\ \hline
32 & 512 & $ 4.00\times 10^{-7} $  &  $ 7.50\times 10^{-6} $  &  $ 7.38\times 10^{-2} $   \\ \hline		
32 & 1024 & $ 6.05\times 10^{-7} $  &  $ 9.54\times 10^{-6} $  &  $ 7.43\times 10^{-2} $    \\ \hline		 
		\end{tabular}
  \end{center}
\end{table}


\begin{table}[ht] 
  \caption{Residual norms rn$^{(2)}$ and the mean ratio of them and their upper bounds,  
in the case of using  Gaussian random multipliers}
\label{tab67}
  \begin{center}
    \begin{tabular}{| c |  c | c |  c |c|}
      \hline
$q$   & n & \bf{mean} & \bf{max} & {\bf mean of ratio} {\rm rn}$^{(2)} / \tilde \Delta_+$   \\ \hline	
8 & 64 & $ 2.61\times 10^{-8}  $  &  $ 5.52\times 10^{-6} $ &   $ 1.46\times 10^{-2} $ \\ \hline		
8 & 128 & $ 3.79\times 10^{-8} $  &  $ 1.21\times 10^{-5} $ &   $ 1.52\times 10^{-2} $ \\ \hline		
8 & 256 & $ 7.54\times 10^{-8} $  &  $ 1.75\times 10^{-5} $  &  $ 1.54\times 10^{-2} $   \\ \hline		
8 & 512 &  $ 4.57\times 10^{-8} $  &  $ 5.88\times 10^{-6} $  &  $ 1.55\times 10^{-2} $   \\ \hline		
8 & 1024 &  $ 1.03\times 10^{-7} $  &  $ 3.93\times 10^{-5} $  &  $ 1.56\times 10^{-2} $   \\ \hline		
32 & 64 &  $ 2.66\times 10^{-8} $  &  $ 2.02\times 10^{-6} $  &  $ 1.38\times 10^{-3} $    \\ \hline		
32 & 128 & $ 9.87\times 10^{-8} $  &  $ 5.22\times 10^{-5} $  &  $ 1.70\times 10^{-3} $   \\ \hline		
32 & 256 & $ 5.41\times 10^{-8} $  &  $ 3.52\times 10^{-6} $  &  $ 1.83\times 10^{-3} $    \\ \hline
32 & 512 & $ 1.75\times 10^{-7} $  &  $ 5.57\times 10^{-5} $  &  $ 1.89\times 10^{-3} $   \\ \hline		
32 & 1024 & $ 1.79\times 10^{-7} $  &  $ 3.36\times 10^{-5} $  &  $ 1.92\times 10^{-3} $    \\ \hline			
    \end{tabular}
  \end{center}
\end{table}

  


%

%

\begin{table}[ht] 
  \caption{Residual norms rn$^{(2)}$ and the mean ratio of them and their upper bounds,  
in the case of using 
 Toeplitz 
random multipliers  and Example \ref{exc1}}
\label{tab611}
  \begin{center}
    \begin{tabular}{| c | c | c | c | c | c |c|}     \hline
$q$  &  n & \bf{mean} & \bf{max} & {\bf mean of ratio} {\rm rn}$^{(2)} / \tilde \Delta_+$  \\ \hline
8 & 64 & $ 1.93\times 10^{-8}  $  &  $ 3.95\times 10^{-6} $ &   $ 1.48\times 10^{-2} $ \\ \hline		
8 & 128 & $ 1.86\times 10^{-8} $  &  $ 1.31\times 10^{-6} $ &   $ 1.52\times 10^{-2} $ \\ \hline		
8 & 256 & $ 3.24\times 10^{-8} $  &  $ 2.66\times 10^{-6} $  &  $ 1.55\times 10^{-2} $   \\ \hline		
8 & 512 &  $ 5.58\times 10^{-8} $  &  $ 1.14\times 10^{-5} $  &  $ 1.55\times 10^{-2} $   \\ \hline		
8 & 1024 &  $ 1.03\times 10^{-7} $  &  $ 1.22\times 10^{-5} $  &  $ 1.56\times 10^{-2} $   \\ \hline		
32 & 64 &  $ 2.62\times 10^{-8} $  &  $ 2.47\times 10^{-6} $  &  $ 1.52\times 10^{-3} $    \\ \hline		
32 & 128 & $ 3.00\times 10^{-8} $  &  $ 1.44\times 10^{-6} $  &  $ 1.73\times 10^{-3} $   \\ \hline		
32 & 256 & $ 1.12\times 10^{-7} $  &  $ 3.42\times 10^{-5} $  &  $ 1.84\times 10^{-3} $    \\ \hline
32 & 512 & $ 1.38\times 10^{-7} $  &  $ 3.87\times 10^{-5} $  &  $ 1.30\times 10^{-3} $   \\ \hline		
32 & 1024 & $ 1.18\times 10^{-7} $  &  $ 1.84\times 10^{-5} $  &  $ 1.92\times 10^{-3} $    \\ \hline		
    \end{tabular}
  \end{center}
\end{table}

%
%

%
%


\begin{table}[ht] 
  \caption{Residual norms rn$^{(2)}$ and the mean ratio of them and their upper bounds,  
in the case of using Toeplitz
random multipliers and Example \ref{exunc}}
\label{tab614}
  \begin{center}
    \begin{tabular}{| c | c | c | c | c | c |c|}     \hline
$q$  &  n & \bf{mean} & \bf{max} & {\bf mean of ratio} {\rm rn}$^{(2)} / \tilde \Delta_+$  \\ \hline			
8 & 64 & $ 3.86\times 10^{-9}  $  &  $ 1.02\times 10^{-7} $ &   $ 1.56\times 10^{-2} $ \\ \hline		
8 & 128 & $ 5.96\times 10^{-9} $  &  $ 3.42\times 10^{-7} $ &   $ 1.56\times 10^{-2} $ \\ \hline		
8 & 256 & $ 7.70\times 10^{-9} $  &  $ 2.21\times 10^{-7} $  &  $ 1.56\times 10^{-2} $   \\ \hline		
8 & 512 &  $ 1.10\times 10^{-8} $  &  $ 2.21\times 10^{-7} $  &  $ 1.56\times 10^{-2} $   \\ \hline		
8 & 1024 &  $ 1.69\times 10^{-8} $  &  $ 4.15\times 10^{-7} $  &  $ 1.56\times 10^{-2} $   \\ \hline		
32 & 64 &  $ 5.49\times 10^{-9} $  &  $ 1.61\times 10^{-7} $  &  $ 1.95\times 10^{-3} $    \\ \hline		
32 & 128 & $ 9.90\times 10^{-9} $  &  $ 1.45\times 10^{-7} $  &  $ 1.95\times 10^{-3} $   \\ \hline		
32 & 256 & $ 1.51\times 10^{-8} $  &  $ 3.05\times 10^{-7} $  &  $ 1.95\times 10^{-3} $    \\ \hline
32 & 512 & $ 2.11\times 10^{-8} $  &  $ 3.60\times 10^{-7} $  &  $ 1.95\times 10^{-3} $   \\ \hline		
32 & 1024 & $ 3.21\times 10^{-8} $  &  $ 5.61\times 10^{-7} $  &  $ 1.95\times 10^{-3} $    \\ \hline		
    \end{tabular}
  \end{center}
\end{table}

%
%

\begin{table}[ht] 
  \caption{Mean ratios of the norms of the inverses of the matrices $T^T_{r,A}H$ and  
 $H_{r,r}$}
\label{tab69}
  \begin{center}
    \begin{tabular}{| c | c | c |}
      \hline
 n  &  \bf{$q=8$} & \bf{$q=32$} \\ \hline	
 64   & $ 7.93 $  &  $ 7.19 $   \\ \hline				
 128  & $ 5.74 $  &  $ 2.12 $   \\ \hline				
 256  & $ 1.26 $  &  $ 3.67 $  \\ \hline				
  512 & $ 5.72 $  &  $ 1.44 $    \\ \hline		
 1024 & $ 5.12 $  &  $ 7.86 $   \\ \hline		
    \end{tabular}
  \end{center}
\end{table}


\begin{table}[ht] 
  \caption{Condition numbers of Gaussian matrices}
\label{tab616}
  \begin{center}
    \begin{tabular}{| c | c | c | c | c |}
      \hline
 n & \bf{mean} & \bf{max} & \bf{min} & \bf{std} \\ \hline	
 64 &  $ 1.83 $  &  $ 2.47 $  &  $ 1.40  $ & $ 0.16 $  \\ \hline		
 128 &  $ 1.51 $ &  $ 1.77 $  &  $ 1.30  $ & $ 0.08 $  \\ \hline		
 256 &  $ 1.34 $ &  $ 1.55 $  &  $ 1.20  $ & $ 0.05 $  \\ \hline		
 512 &  $ 1.23 $ &  $ 1.38 $  &  $ 1.11  $ & $ 0.03 $  \\ \hline		
 1024&  $ 1.15 $ &  $ 1.23 $  &  $ 1.08  $ & $ 0.02 $  \\ \hline		
    \end{tabular}
  \end{center}
\end{table}

\begin{table}[ht] 
  \caption{Condition numbers of circulant matrices of Example \ref{exc1}}
\label{tab617}
  \begin{center}
    \begin{tabular}{| c | c | c | c | c |}
      \hline
 n & \bf{mean} & \bf{max} & \bf{min} & \bf{std} \\ \hline	
  64 &  $ 4.65\times 10^{+1} $  &  $ 6.66\times 10^{+3} $  &  $ 4.11\times 10^{+0} $ & $ 2.91\times 10^{+2} $    \\ \hline		
 128 &  $ 4.91\times 10^{+1} $  &  $ 3.93\times 10^{+3} $  &  $ 5.92\times 10^{+0} $ & $ 1.65\times 10^{+2} $   \\ \hline		
 256 &  $ 1.40\times 10^{+2} $  &  $ 7.31\times 10^{+4} $  &  $ 8.50\times 10^{+0} $ & $ 2.32\times 10^{+3} $   \\ \hline		
 512 &  $ 1.01\times 10^{+2} $  &  $ 1.06\times 10^{+4} $  &  $ 1.33\times 10^{+1} $ & $ 4.69\times 10^{+2} $   \\ \hline	
1024 &  $ 1.16\times 10^{+2} $  &  $ 3.48\times 10^{+3} $  &  $ 1.97\times 10^{+1} $ & $ 1.79\times 10^{+2} $   \\ \hline		
    \end{tabular}
  \end{center}
\end{table}



\begin{thebibliography}{hspace{0.5in}}


\bibitem[A94]{A94}
O. Axelsson, {\em Iterative Solution Methods},
Cambridge Univ. Press, 
England, 1994.




 



\bibitem[B85]{B85} 
J. R. Bunch,
Stability of Methods for Solving 
Toeplitz Systems of Equations,
{\em SIAM J. Sci. Stat. Comput.},
{\bf 6}{\bf(2)}, 349--364, 1985.


\bibitem[B02]{B02}
M. Benzi, Preconditioning Techniques for Large Linear Systems: a Survey,
{\em J. of Computational Physics}, {\bf 182}, 418--477, 2002.




\bibitem[BBD12]{BBD12}
D. Becker, M. Baboulin, J. Dongarra, 
Reducing the amount of pivoting in symmetric indefinite systems, 
{\em Proceedings of the 9th International Conference on Parallel Processing and 
Applied Mathematics, PPAM 2011, 
Lecture Notes in Computer Science}, {\bf 7203}, 133--142, Springer-Verlag (2012). 
Also {\em INRIA} Research Report 7621 (05/2011), {\em University of Tennessee}
 Technical Report ICL-UT-11-06. 


\bibitem[BCD14]{BCD14}
G. Ballard, E. Carson, J. Demmel,
M. Hoemmen, N. Knight, O. Schwartz,
Communication Lower Bounds and Optimal Algorithms 
for Numerical Linear Algebra.
{\em Acta Numerica}, {\bf 23}, 1--155, 2014.




\bibitem[BP94]{BP94}
D. Bini, V. Y. Pan,
{\em Polynomial and Matrix Computations, Volume 1: Fundamental Algorithms},
Birkh\"auser, Boston, 1994.


\bibitem[CD05]{CD05}
Z. Chen, J. J. Dongarra, Condition Numbers of Gaussian Random Matrices,
{\em SIAM. J. on Matrix Analysis and Applications}, {\bf 27}, 603--620, 2005.




\bibitem[CGS07]{CGS07}
S. Chandrasekaran, M. Gu, X. Sun, J. Xia, J. Zhu,
A Superfast Algorithm for Toeplitz Systems of Linear Equations,
{\em SIAM. J. on Matrix Analysis and Applications}, {\bf 29}, {\bf 4}, 1247--1266, 2007.


\bibitem[CPW74]{CPW74}
R. E. Cline, R. J. Plemmons, and G. Worm, 
Generalized Inverses of Certain {T}oeplitz Matrices,
{\em Linear Algebra and Its Applications,} {\bf 8}, 25--33, 1974.




\bibitem[D88]{D88}
J. Demmel,  
The Probability That a Numerical Analysis Problem Is Difficult,
{\em Math. of Computation}, {\bf 50}, 449--480, 1988.


\bibitem[DL78]{DL78}
R. A. Demillo, R. J. Lipton,
A Probabilistic Remark on Algebraic Program Testing,
{\em Information Processing Letters}, {\bf 7}, {\bf 4}, 193--195, 1978. 


\bibitem[DS01]{DS01}
K. R. Davidson, S. J. Szarek, 
Local Operator Theory, Random Matrices, and Banach Spaces, 
in {\em Handbook on the Geometry of  Banach Spaces} 
(W. B. Johnson and J. Lindenstrauss editors), pages 317--368, 
North Holland, Amsterdam, 2001. 


\bibitem[E88]{E88}
A. Edelman, Eigenvalues and Condition Numbers of Random Matrices,
{\em SIAM J. on Matrix Analysis and Applications}, {\bf 9}, {\bf 4},
543--560, 1988.


\bibitem[ES05]{ES05}
A. Edelman, B. D. Sutton,  Tails of Condition Number Distributions,
{\em SIAM J. on Matrix Analysis and Applications}, {\bf 27}, {\bf 2},
547--560, 2005.




\bibitem[G97]{G97}
A. Greenbaum,
{\em Iterative Methods  for Solving Linear Systems},
SIAM, Philadelphia, 
1997.


\bibitem[G98]{G98}
M. Gu, 
Stable and Efficient Algorithms for Structured Systems of Linear Equations, 
{\em SIAM J. on Matrix Analysis and Applications}, {\bf 19}, 279--306, 1998.




\bibitem[GE96]{GE96}
M. Gu, S. C. Eisenstat,
Efficient Algorithms for Computing a Strong Rank-revealing QR Factorization,
{\em SIAM Journal on Scientific Computing}, {\bf 17}, 848--869, 1996.


\bibitem[GKO95]{GKO95}
I. Gohberg, T. Kailath, V. Olshevsky,
Fast Gaussian Elimination with Partial Pivoting for Matrices with Displacement Structure,
{\em Mathematics of Computation}, {\bf 64}, 1557--1576, 1995.


\bibitem[GL13]{GL13}
G. H. Golub, C. F. Van Loan,
{\em Matrix Computations},
Johns Hopkins University Press, Baltimore, Maryland, 2013 (fourth addition).






\bibitem[H02]{H02}
N. J. Higham, {\em Accuracy and Stability in Numerical Analysis},
SIAM, Philadelphia, 2002 (second edition).


\bibitem[HMT11]{HMT11}
N. Halko, P. G. Martinsson, J. A. Tropp,
Finding Structure with Randomness: Probabilistic Algorithms
for Constructing Approximate Matrix Decompositions, 
{\em SIAM Review}, {\bf 53,~2}, 217--288, 2011.


\bibitem[HP92]{HP92}
Y. P. Hong, C.--T. Pan,
 The Rank Revealing QR Decomposition and SVD, 
{\em Math. of Computation}, {\bf 58}, 213--232, 1992.










\bibitem[M11]{M11}
M. W. Mahoney,
Randomized Algorithms for Matrices and Data,  
{\em Foundations and Trends in Machine Learning}, NOW Publishers, {\bf 3,~2}, 2011.
(Abridged version in: Advances in Machine Learning and Data Mining for Astronomy, 
edited by M. J. Way, et al., pp. 647-672, 2012.) 


\bibitem[MRT05]{MRT05}
P. G. Martinsson, V. Rokhlin, M. Tygert, 
A Fast Algorithm for the Inversion of 
Toeplitz Matrices, 
{\em Comput. Math. Appl.}, {\bf 50}, 741--752, 2005.


\bibitem[MRT11]{MRT11}
P.-G. Martinsson, V. Rokhlin, M. Tygert,
A Randomized Algorithm for the Decomposition of Matrices,
{\em Applied and Computational Harmonic Analysis},
{\bf 30}, 47--68, 2011.


















\bibitem[P90]{P90}
V. Y. Pan,
On Computations with Dense Structured Matrices,
{\em Math. of Computation}, {\bf 55,~191}, 179--190, 1990.
Also in {\em Proc. Intern. Symposium on Symbolic and Algebraic
Computation (ISSAC'89)}, 34--42, ACM Press, New York, 1989.


\bibitem[P00a]{P00a}
C.--T. Pan,
On the Existence and Computation
of Rank-revealing LU Factorization,
{\em Linear Algebra and Its Applications}, {\bf 316}, 199--222, 2000.


\bibitem[P01]{p01}
V. Y. Pan,
{\em Structured Matrices and Polynomials: Unified Superfast Algorithms},
Birkh\"auser/Springer, Boston/New York, 2001.


\bibitem[P15]{P15}
V. Y. Pan,
Transformations of Matrix Structures Work Again, 
{\em Linear Algebra and Its Applications}, 
465, 1--32, 2015.
Available at arxiv:1311.3729[math.NA]


\bibitem[Pb]{Pb}
V. Y. Pan,
Fast Approximation
Algorithms for Computations with Cauchy Matrices, Polynomials and Rational Functions,
{\em Proc. of the Ninth International Computer Science Symposium in Russia  (CSR'2014)},
(E. A. Hirsch et al., editors), Moscow, 
          Russia, June 2014, {\em Lecture Notes in Computer Science (LNCS)}, {\bf 8476}, pp. 287-300
Springer International Publishing, Switzerland, 2014.
Also Tech. Report TR 2014005,
\textit{PhD Program in Comp. Sci.}, \textit{Graduate Center, CUNY}, 2014.




\bibitem[PGMQ]{PGMQ}
V. Y. Pan, D. Grady, B. Murphy, G. Qian, R. E. Rosholt, A. Ruslanov,
Schur Aggregation for Linear Systems and Determinants,
{\em Theoretical Computer Science}, 
{\em Special Issue on Symbolic--Numerical Algorithms}
(D. A. Bini, V. Y. Pan, and J. Verschelde editors), 
{\bf 409}, {\bf 2}, 255--268, 2008.


\bibitem[PIM10]{PIM10} 
V. Y. Pan, D. Ivolgin, B. Murphy, R. E. Rosholt, Y. Tang, X. Yan,
Additive Preconditioning for Matrix Computations,
{\em Linear Algebra and Its Applications}, {\bf 432}, 1070--1089, 2010.


\bibitem[PP95]{PP95}
D. S. Parker, B. Pierce, 
The Randomizing FFT: An Alternative to Pivoting in Gaussian Elimination, 
 Tech. Report CSD 950037, {\em Computer Science Dept., Univ. California at Los
Angeles}, 1995.


\bibitem[PQ10]{PQ10}
V. Y. Pan, G. Qian,
Randomized Preprocessing of Homogeneous Linear Systems of Equations, 
{\em Linear Algebra and Its Applications}, {\bf 432}, 3272--3318, 2010.


\bibitem[PQ12]{PQ12}
V. Y. Pan, G. Qian,
Solving Linear Systems of Equations 
with Randomization, Augmentation and Aggregation,
{\em Linear Algebra and Its Applications}, {\bf 437},
2851--1876, 2012.


\bibitem[PQYa]{PQYa}
V. Y. Pan, G. Qian, X. Yan, 
Supporting GENP with Random Multipliers,
Tech. Report TR 2013016,
\textit{PhD Program in Comp. Sci.}, \textit{Graduate Center, CUNY}, 2013,



\bibitem[PQZ11]{PQZ11}
V. Y. Pan, G. Qian, A. Zheng,
Randomized Preconditioning of the MBA Algorithm, 
in {\em Proc. International Symp. on Symbolic and Algebraic Computation       
 	  (ISSAC'2011)}, San Jose, California, June 2011 (edited by                  
Anton Leykin), 281--288, ACM Press,  New York, 2011.


\bibitem[PQZ13]{PQZ13}
V. Y. Pan, G. Qian, A. Zheng,
Randomized Preprocessing versus Pivoting, 
{\em Linear Algebra and Its Applications}, 
{\bf 438,~4}, 1883--1899, 2013.


\bibitem[PQZa]{PQZa}
V. Y. Pan, G. Qian, A. Zheng,
Randomized  Matrix Computations,
Tech. Report TR 2012009,
\textit{PhD Program in Comp. Sci.}, \textit{Graduate Center, CUNY}, 2012.

and 



\bibitem[PQZb]{PQZb}
V. Y. Pan, G. Qian, L. Zhao,
Randomized  Augmentation and Additive Preprocessing,
preprint, 2014.




\bibitem[PQZC]{PQZC}
V. Y. Pan, G. Qian, A. Zheng, Z. Chen,
Matrix Computations and Polynomial Root-finding with Preprocessing, 
{\em Linear Algebra and Its Applications}, 2014,  DOI: 10.1016/j.laa.2014.06.027.  
Also arxiv:1311.3730[math.NA].


\bibitem[PSZa]{PSZa}
V. Y. Pan, J. Svadlenka, L. Zhao,
Estimating the Norms of Circulant and Toeplitz Random Matrices and Their Inverses,
{\em Linear Algebra and Its Applications}, 2014 (in press), 
DOI: 10.1016/j.laa.2014.06.027.  
Also arxiv:1311.3730[math.NA]


\bibitem[PZa]{PZa}
V. Y. Pan,  L. Zhao,
A Single Random Triangular Toeplitz Multiplier Ensures
Strong Nonsingularity,
preprint, 2014.


\bibitem[S80]{S80}
R. D. Skeel,
Iterative Refinement Im[plies Numerical Stability for Gaussian Elimimation,
{\em Math. of Computation}, {\bf 35,~151}, 817--832, 1980.


\bibitem[S80a]{S80a}
J. T. Schwartz,
Fast Probabilistic Algorithms for Verification of Polynomial Identities, 
{\em Journal of ACM}, {\bf 27}, {\bf 4}, 701--717, 1980. 


\bibitem[S91]{S91}
S. J. Szarek,
Condition Numbers of Random Matrices,
{\em Journal of Complexity},
{\bf 7, ~2}, 131--149, 1991.


\bibitem[S95]{S95}
J.-G. Sun, 
On Perturbation Bounds for QR Factorization,
{\em Linear Algebra and Its Applications}, {\bf 215}, 95--111, 1995.


\bibitem[S98]{S98}
G. W. Stewart,
{\em Matrix Algorithms, Vol I: Basic Decompositions},
SIAM, 
Philadelphia, 
1998.


\bibitem[S03]{S03}
G. W. Stewart,
{\em Matrix Algorithms, Vol II: Eigensystems},
SIAM, 
Philadelphia, 
2003.


\bibitem[SST06]{SST06}
A. Sankar, D. Spielman, S.-H. Teng, 
Smoothed Analysis of the Condition Numbers and Growth Factors of Matrices, 
{\em SIAM J. on Matrix Analysis}, {\bf 28}, {\bf 2}, 446--476, 2006. 


\bibitem[T11]{T11}
J. A. Tropp, Improved Analysis of the Subsampled Randomized Hadamard Transform,
{\em Adv. Adapt. Data Anal.}, {\bf 3,~1--2}, 
Special Issue, "Sparse Representation of Data and Images,"  
115-126, 2011.


\bibitem[Z79]{Z79}
R. E. Zippel, Probabilistic Algorithms for Sparse Polynomials, 
{\em Proceedings of EUROSAM'79, Lecture Notes in Computer Science}, 
{\bf 72}, 216--226, Springer, Berlin, 1979.



\end{thebibliography}
\end{document}